\newtheorem{theorem}{Theorem}[section]
\newtheorem{lemma}[theorem]{Lemma}
\newtheorem{cor}[theorem]{Corollary}
\newtheorem{conj}[theorem]{Conjecture}
\theoremstyle{definition}
\newtheorem*{remark}{Remark}
\numberwithin{equation}{section}
\newcommand{\abs}[1]{\lvert#1\rvert}
\newcommand{\spt}{\mbox{\rm spt}}
\newcommand{\Parans}[1]{\left(#1\right)}
\newcommand\leg[2]{\genfrac{(}{)}{}{}{#1}{#2}} 
\newcommand\FL[1]{\left\lfloor#1\right\rfloor}
\newcommand\mylabel[1]{\label{#1}}
\newcommand\myeqn[1]{(\ref{eq:#1})\framebox{#1}}
\newcommand{\beqs}{\begin{equation*}}
\newcommand{\eeqs}{\end{equation*}}
\newcommand{\beq}{\begin{equation}}
\newcommand{\eeq}{\end{equation}}
\renewcommand{\MR}[1]{\href{http://www.ams.org/mathscinet-getitem?mr={#1}}{MR{#1}}}
\newcommand\Up[2]{U_{#1}\Parans{#2}}
\newcommand{\Np}{N_\psi}
\newcommand{\Npm}{N_{\phi_{-}}}
\newcommand\nutwid{\overset {\text{\lower 3pt\hbox{$\sim$}}}\nu}
\newcommand\sgtwid{\overset {\text{\lower 3pt\hbox{$\sim$}}}{sg}}
\newcommand\Nptwid{\overset {\text{\lower 3pt\hbox{$\sim$}}}{\Np}}
\newcommand\eptwid{\overset {\text{\lower 3pt\hbox{$\sim$}}}{\varepsilon}}
\newcommand\myeqref[1]{\eqref{#1}}
\newcommand\myref[1]{\ref{#1}}
\newcommand{\sg}{\mbox{sg}}
\begin{document}

\title[Unimodal mod $4$ conjectures and mock theta functions]{A proof of 
the mod $4$ unimodal sequence conjectures 
and related mock theta functions}

\author{Rong Chen}
\address{School of Mathematical Sciences, East China Normal University,
Shanghai, People's Republic of China}
\email{rchen@stu.ecnu.edu.cn}
\author{F. G. Garvan}
\address{Department of Mathematics, University of Florida, Gainesville,
FL 32611-8105}
\email{fgarvan@ufl.edu}
\thanks{The first author was supported in part by the National Natural 
Science Foundation of China (Grant No. 11971173) and an ECNU 
Short-term Overseas Research Scholarship for Graduate Students
(Grant no. 201811280047).
The second author was supported in part by a grant from
        the Simon's Foundation (\#318714).}

\subjclass[2020]{05A17, 05A39, 11E41, 11F33, 11F37, 11P81, 11P83, 11P84, 33D15}

\keywords{Partitions, mock theta functions, Ramanujan-type congruences, Hurwitz class number, unimodal sequences, Andrews spt-function, Hecke-Rogers series}

\date{\today}                    

\begin{abstract}
In 2012 Bryson, Ono, Pitman and Rhoades showed how the generating functions
for certain strongly unimodal sequences are related to quantum modular
and mock modular forms. They proved some parity results and conjectured
some mod 4 congruences for the coefficients of these generating functions.
In 2016 Kim, Lim and Lovejoy obtained similar results for odd-balanced
unimodal sequences and made similar mod 4 conjectures. We prove
all of these mod 4 conjectures and similar congruences for the Andrews 
spt-function and related mock theta functions. Our method of proof involves 
new Hecke-Rogers type identities for indefinite binary quadratic forms 
and the Hurwitz class number.
\end{abstract}

\dedicatory{Dedicated to the memory of Freeman Dyson}

\maketitle

\section{Introduction}
\label{sec:intro}

We prove some conjectured congruences mod $4$ for certain unimodal
sequences. We follow Bryson, Ono, Pitman and Rhoades's \cite{Br-On-Pi-Rh12}
definition
of a strongly unimodal sequence.
A sequence of integers $\{a_j\}_{j=1}^s$ is a 
\textbf{strongly unimodal sequence} of size $n$ if it satisfies
$$
0 < a_1 < a_2 < \cdots < a_k > a_{k+1} > \cdots > a_s >0 \quad\mbox{and}\quad
a_1+a_2 + \cdots + a_s =n,
$$
for some $k$. Let $u(n)$ be the number of such sequences, and define the
\textbf{rank} of such a sequence as $s - 2k + 1$; i.e.\ the number terms
after the maximum minus the number of terms before it. Let $u(m,n)$ be
the number of strongly unimodal sequences of size $n$ and rank $m$. Then
\beq
\mathcal{U}(z;q) := \sum_{m,n} u(m,n) z^m q^n 
= \sum_{n=0}^\infty (-zq;q)_n (-z^{-1}q;q)_n q^{n+1},
\label{eq:Uzqdef}
\eeq
where we  use the usual $q$-notation
$$
(a;q)_n := \prod_{k=0}^{n-1}(1-aq^k).
$$
In addition, let $u(a,b;n)$ be the number of strongly unimodal sequences 
of size $n$ and rank congruent to $a$ mod $b$.

Bryson, Ono, Pitman and Rhoades \cite{Br-On-Pi-Rh12} relate
$\mathcal{U}(-1;q)$ with a quantum modular form which is dual to
a quantum modular form of Zagier \cite{Za2001}. They also show that
$\mathcal{U}(\pm i;q)$ is mock modular form and prove interesting parity results
for the coefficients $u(n)$. They made the following conjecture.
\begin{conj}
\label{conju}
Suppose $\ell\equiv 7,11,13,17\pmod{24}$ is prime and
 $\displaystyle\leg{k}{\ell}=-1$. Then for all $n$ we have
\beq
u(\ell^2n+kl-s(\ell))\equiv 0\pmod4,
\label{eq:conjuA}
\eeq
where $s(\ell)=\tfrac{1}{24}(\ell^2-1)$.
Moreover, for $a\in\{0,1,2,3\}$ we have
\begin{align}
u(a,4; \ell^2n+kl-s(\ell))&\equiv 0\pmod2,
\label{eq:conjuB}\\
\noalign{and}
u(0,4; \ell^2n+kl-s(\ell))&\equiv 
u(2,4; \ell^2n+kl-s(\ell))\pmod4.
\label{eq:conjuC}
\end{align}
\end{conj}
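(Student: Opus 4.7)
The plan is to translate the mod-$4$ statements into coefficient congruences for $\mathcal{U}(z;q)$ evaluated at fourth roots of unity. Reversing a strongly unimodal sequence negates its rank, so $u(m,n)=u(-m,n)$; in particular $u(2,4;n)$ is automatically even (every term pairs non-trivially as $u(m,n)+u(-m,n)$ with $m\equiv 2\pmod 4$, and no such $m$ is self-paired), and $\mathcal{U}(i;q)=\mathcal{U}(-i;q)$. Discrete Fourier inversion at $z\in\{1,-1,i\}$ then yields
\begin{equation*}
u(0,4;n)-u(2,4;n)=[q^n]\,\mathcal{U}(i;q), \qquad 4\,u(1,4;n)=[q^n]\bigl(\mathcal{U}(1;q)-\mathcal{U}(-1;q)\bigr).
\end{equation*}
In view of the automatic evenness of $u(2,4;n)$, Conjecture~\ref{conju} reduces to
\begin{equation*}
[q^n]\,\mathcal{U}(i;q)\equiv 0\pmod 4 \quad\text{and}\quad [q^n]\bigl(\mathcal{U}(1;q)-\mathcal{U}(-1;q)\bigr)\equiv 0\pmod 8
\end{equation*}
for $n=\ell^2 N+k\ell-s(\ell)$: the first gives (C), the second gives (B) for $a\in\{1,3\}$, and then (A) together with (B) for $a\in\{0,2\}$ follow formally from $u(n)=u(0,4;n)+2u(1,4;n)+u(2,4;n)$ and the fact that $u(2,4;n)$ is always even.

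Next I would seek \emph{Hecke--Rogers type identities} for these two $q$-series. Since $(iq;q)_n(-iq;q)_n=(-q^2;q^2)_n$, we have $\mathcal{U}(i;q)=\sum_{n\ge 0}(-q^2;q^2)_n\,q^{n+1}$, a false-theta type object that Bryson--Ono--Pitman--Rhoades already identified as the holomorphic part of a weight-$3/2$ mock modular form. My aim is to rewrite $\mathcal{U}(i;q)$ and $\mathcal{U}(1;q)-\mathcal{U}(-1;q)$ as signed double sums $\sum_{(m,n)}\sgn(\cdot)\,q^{Q(m,n)}$ over indefinite binary quadratic forms $Q$, in the style of Hickerson--Mortenson and Andrews. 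The completed indefinite theta is then a weight-$3/2$ harmonic Maass form whose coefficients decompose into linear combinations of Hurwitz class numbers $H(N)$ at discriminants of the shape $24n-c$ for explicit constants $c$. From that point the required mod-$4$ and mod-$8$ congruences should follow by a class-number analysis at $\ell$ modelled on the Folsom--Ono and Garvan proofs for the Andrews spt-function: for $\ell\equiv 7,11,13,17\pmod{24}$, the local factor at $\ell$ appearing in $H(N)$ with $N=24(\ell^2 N'+k\ell-s(\ell))-c$ carries a multiplier of the form $1+\leg{\ast}{\ell}$ which vanishes when $\leg{k}{\ell}=-1$, forcing divisibility by the requisite power of $2$. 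A Sturm-bound verification inside a space of modular forms of level dividing $576\,\ell^2$ would then close the argument uniformly in $N$.

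The central obstacle, and the principal new contribution, is the production of the Hecke--Rogers identities themselves for $\mathcal{U}(i;q)$, for $\mathcal{U}(1;q)-\mathcal{U}(-1;q)$, and for the analogous series attached to the odd-balanced unimodal sequences of Kim--Lim--Lovejoy and to the spt and mock theta functions mentioned in the abstract. Turning a single-sum false theta like $\sum_{n\ge 0}(-q^2;q^2)_n\,q^{n+1}$ into an indefinite binary quadratic form theta requires a careful Bailey or Appell--Lerch manipulation together with a telescoping sign argument, and then matching the resulting indefinite theta against a Hurwitz class number generating function typically demands an auxiliary identity for $H(N)$ tailored to the chosen form $Q$. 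It is these Hecke--Rogers/Hurwitz class number identities, rather than any downstream modular forms argument, that form the technical heart on which Conjecture~\ref{conju} and its companions depend.
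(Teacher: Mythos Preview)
Your reduction via discrete Fourier inversion at $z\in\{1,-1,i\}$ is correct, and you are right that the paper's proof also hinges on new Hecke--Rogers identities together with a link to Hurwitz class numbers. However, the mechanism you sketch for extracting the mod~$4$ (and mod~$8$) congruences from that link is not the one that actually works, and your outline would not close as stated.

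First, the coefficients of $\mathcal{U}(i;q)=\psi(q)$ do \emph{not} decompose as linear combinations of Hurwitz class numbers, nor does a local factor of the shape $1+\leg{\ast}{\ell}$ appear in $H(N)$ that vanishes when $\leg{k}{\ell}=-1$; the Hecke recursion for $H$ has the form $H(\ell^2 n)+\leg{-n}{\ell}H(n)+\ell H(n/\ell^2)=(\ell+1)H(n)$, which gives no such multiplier. What the paper actually does is establish a \emph{congruence} $N_{\phi_-}(n)\equiv -H(24n-1)\pmod{4}$ for a different mock theta function, then compare the Hecke--Rogers series for $\psi$ and $\phi_-$ to get $\phi_-(q)-\psi(q)\equiv 2D_1(q)\pmod{4}$ for an explicit indefinite double sum $D_1$. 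The mod~$4$ behaviour of $H(24n-1)$ is then controlled not by a local factor but by the genus-theoretic characterisation (Hasse, Pizer) that $3H(n)\equiv 2\pmod 4$ iff $n=p_1^{4a+1}p_2^{4b+1}m^2$ with $\leg{p_1}{p_2}=-1$. The correction term $D_1$ must in turn be analysed using Pell-equation counts in $\mathbb{Z}[\sqrt{6}]$ (results of Lovejoy and of Andrews--Dyson--Hickerson), and the extension from square-free $24n-1$ to the general case requires the Hecke-eigenform congruence for the spt-function modulo $72$. None of these ingredients appear in your plan.

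Second, a Sturm-bound verification at level $576\ell^2$ cannot close the argument ``uniformly in $N$'' because the conjecture quantifies over infinitely many primes $\ell$; the paper's proof is instead uniform in $\ell$ via the prime-factorisation criteria above together with quadratic reciprocity. Finally, your choice to target $\mathcal{U}(1;q)-\mathcal{U}(-1;q)\pmod{8}$ is harder than necessary: the paper targets $U(q)=\mathcal{U}(1;q)$ directly, proves a Hecke--Rogers identity for it of exactly the same shape as the one for $\psi(q)$, obtains $U(q)-\psi(q)\equiv 2D_u(q)\pmod{4}$, and then shows $d_u(m)=0$ on the relevant progression by the Pell analysis; \eqref{eq:conjuA} then follows from the already-proved $N_\psi(m)\equiv 0\pmod 4$, and \eqref{eq:conjuB} is deduced from \eqref{eq:conjuA} and \eqref{eq:conjuC}. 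So your high-level shape (Hecke--Rogers plus class numbers) is right, but the actual bridge from indefinite theta to mod~$4$ vanishing runs through genus theory and Pell equations, not through a local Euler factor or a Sturm bound.
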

\begin{remark}
Here and throughout this paper $\displaystyle\leg{\cdot}{\cdot}$
denotes the Kronecker symbol.
\end{remark}

In this paper we prove Conjecture \myref{conju} and a similar conjecture
for a related unimodal sequence function studied by 
Kim, Lim and Lovejoy \cite{Ki-Li-Lo16}. We note that there is a stronger
result
for \myeqref{eq:conjuC} which include primes congruent to 
$1,\pm5\pmod{24}$. See Theorem \myref{conjuCstrong}.
We also prove analogous mod $4$ results
for Andrews's \cite{An08} spt-function and for the coefficients of related
mock theta functions.

We describe Kim, Lim and Lovejoy's odd-balanced unimodal sequence.
A sequence of integers $\{a_j\}_{j=1}^s$ is \textbf{unimodal}
 of size $n$ if it satisfies
$$
0 < a_1 \le a_2 \le \cdots \le a_{k-1} < a_k > a_{k+1} \ge \cdots 
\ge a_{s-1} \ge a_s >0 \quad\mbox{and}\quad
a_1+a_2 + \cdots + a_s =n,
$$
Such a unimodal sequence is called \textbf{odd-balanced}
if the peak $a_k$ is even, even parts to the left
and right of the peak are distinct and the odd parts to the left of the peak
are identical with those to the right. As before the rank is the number to 
right of the peak minus the number to the left. We let
$v(n)$ be the number of odd-balanced unimodal sequences of size $2n+2$
and let $v(m,n)$ be the number with rank $m$.
Then
$$
\mathcal{V}(z;q) := \sum_{m,n} v(m,n) z^m q^n 
= \sum_{n=0}^\infty \frac{(-zq;q)_n (-z^{-1}q;q)_n q^{n}}{(q;q^2)_{n+1}}.
$$
Kim, Lim and Lovejoy obtained quantum modular, mock modular and parity
results analogous to Bryson, Ono, Pitman and Rhoades's results. 
They point out the mock modularity properties of $\mathcal{V}(z;q)$
follow from some results of Mortenson \cite[Theorem 4.4]{Mo2014}.
They also made the
following 
\begin{conj}
\label{conjv}
Let $\ell\not\equiv -1\pmod8$ be prime and let $k$ be a positive integer 
satisfying $\ell\|8k+7$. If $(8k+7)/\ell$ is a quadratic residue modulo $\ell$, then
$$
v(\ell^2n+k)\equiv 0\pmod4.
$$
For example, $v(9n+4)\equiv v(25n+11)\equiv v(25n+16)\equiv 0\pmod4$. 
Moreover, the same congruences hold for the coefficients of 
$\mathcal{V}(\pm i,q)$.
\end{conj}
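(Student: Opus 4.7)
The plan is to follow the same Hecke--Rogers identity plus Hurwitz class number strategy that the introduction advertises for Conjecture~\myref{conju}, adapting it to the odd-balanced setting. Specializing the defining series gives
$$
\mathcal{V}(-1;q) = \sum_{n\ge 0}\frac{(q;q)_n^2\, q^n}{(q;q^2)_{n+1}},\qquad
\mathcal{V}(\pm i;q) = \sum_{n\ge 0}\frac{(-q^2;q^2)_n\, q^n}{(q;q^2)_{n+1}},
$$
and my first task would be to convert each of these into a Hecke--Rogers indefinite theta series
$$
\sum_{(a,b)\in\Lambda}\mathrm{sgn}(a,b)\, q^{Q(a,b)}
$$
for a suitable indefinite binary quadratic form $Q$ whose discriminant naturally governs the quantity $8n+7$. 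The derivation should run through either a Bailey-pair manipulation, a residue calculation from the Appell--Lerch form used by Mortenson and invoked by Kim--Lim--Lovejoy, or a direct reduction from the analogous Hecke--Rogers identity for the strongly unimodal generating function.

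Once the three identities are obtained, the coefficient of $q^N$ in the appropriate combination becomes a weighted count of representations of $8N+7$ by $Q$, which can be rewritten, modulo $4$, in terms of the Hurwitz class number $H(8N+7)$ plus (possibly) Fourier coefficients of unary theta series. At this point the congruence reduces to a purely arithmetic mod~$4$ statement about class numbers along the progression $N = \ell^2 n + k$. I would establish this via the weight $3/2$ Hecke operator $T_{\ell^2}$: its action on the Cohen--Eisenstein-type part expresses the coefficient at $\ell^2 n + k$ as $c(\ell^2 n + k)$ plus $\leg{D_0}{\ell}\, c(\cdot)$ plus $\ell\cdot c(\cdot)$, where $D_0$ is the fundamental discriminant part of $-(8k+7)$. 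The assumption $\ell\|8k+7$ together with $(8k+7)/\ell$ being a quadratic residue mod $\ell$ controls this Kronecker symbol, while $\ell\not\equiv -1\pmod 8$ fixes the local behavior at $2$, and together they force the image to vanish mod $4$.

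The same Hecke-operator analysis applied to the Hecke--Rogers representations of $\mathcal{V}(\pm i;q)$ yields the stated congruences for their coefficients; one can alternatively observe that the differences $v(n) - [q^n]\mathcal{V}(\pm i;q)$ land in unary theta pieces whose Fourier coefficients already vanish mod $4$ on the prescribed progression under the given hypotheses. The main obstacle I expect is the first step: producing the Hecke--Rogers identity for $\mathcal{V}(z;q)$ with the denominator $(q;q^2)_{n+1}$, which is more involved than the sum defining $\mathcal{U}(z;q)$, and identifying which indefinite form $Q$ and sign region $\Lambda$ are required. Once the right identity is in place, the class-number and Hecke-operator endgame is essentially forced by the shape of the hypotheses on $\ell$ and $k$.
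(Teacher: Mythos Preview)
There is a fundamental problem: Conjecture~\myref{conjv} as stated is \emph{false}, so no proof strategy can succeed. The paper itself notes this immediately after the statement, giving the counterexample $\ell=17$, $k=99$: here $(8k+7)/\ell=47$ is a quadratic residue mod~$17$, yet $v(99)=81474897186\equiv 2\pmod4$. The hypothesis $\ell\not\equiv -1\pmod 8$ must be strengthened to $\ell\not\equiv \pm 1\pmod 8$; the corrected version is Theorem~\myref{thev4}.

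Your outline would break down precisely at the step where you assert that the difference $v(n)-[q^n]\mathcal{V}(\pm i;q)$ ``lands in unary theta pieces whose Fourier coefficients already vanish mod~$4$ on the prescribed progression.'' In the paper's language this difference is (mod~$4$) twice the coefficient $d_v(m)$ of a Hecke--Rogers series whose exponents satisfy $8m+7=8(n+1)^2-(2r+1)^2$. The vanishing argument uses $\leg{8}{\ell}=-1$, which holds only for $\ell\equiv 3,5\pmod 8$; when $\ell\equiv 1\pmod 8$ there is no obstruction to such representations and $d_v(m)$ need not vanish. Your remark that ``$\ell\not\equiv -1\pmod 8$ fixes the local behavior at~$2$'' is exactly where the gap lies: that hypothesis is too weak to pin down the Kronecker symbol $\leg{2}{\ell}$. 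By contrast, the ``moreover'' clause for $\mathcal{V}(\pm i;q)=q^{-1}A(q)$ is genuinely true for all $\ell\not\equiv 7\pmod 8$ (Theorem~\myref{theA4}), because that part goes through the Hurwitz class number directly and does not require control of the difference series. So the two halves of the conjecture have different ranges of validity, and your sketch conflates them.
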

\begin{remark}
This conjecture is false. 
For example let $\ell=17$ and $k=99$. Then $(8k+7)/\ell=47$ is a quadratic
residue mod $17$, but
$$
v(99)=81474897186\equiv 2\pmod4.
$$
The primes congruent  to $1$ mod $8$ must be excluded. The correct version
of this conjecture, which we prove, is given below in Theorem \myref{thev4}.
\end{remark}

The Andrews's spt-function \cite{An08} is related to strongly unimodal 
sequences.  
See equations \myeqref{eq:Upsiid} and \myeqref{eq:sptPsimod4} below.
Andrews defined $\spt(n)$ as the number of smallest parts in the partitions 
of $n$,
and  proved that it satisfied some surprising Ramanujan-type congruences 
mod $5$, $7$ and $13$. Bringmann \cite{Br2008} showed the spt generating function
is related to a weight $3/2$ harmonic Maass form.
The second author made the following
\begin{conj}\footnote{This conjecture was presented by the second author
in a talk, entitled \textit{The Andrews spt-function mod 4}, at the
AMS Special Session on Arithmetic Properties of Sequences from Number Theory and Combinatorics, AMS Annual Meeting, Atlanta, January 4, 2017.}
\label{sptconj}
Suppose $\ell > 3$ is prime and $\ell\not\equiv23\pmod{24}$.
Let $\eptwid=\eptwid(\ell) =  1$ if $\ell\equiv 1\pmod{24}$ and $-1$ otherwise. 
Then
$$
\spt(\ell n - s({\ell})) \equiv 0 \pmod{4}, \quad
\mbox{(where $s(\ell)=\tfrac{1}{24}(\ell^2-1)$)},
$$
when $\displaystyle\leg{n}{\ell} = \eptwid$.
\end{conj}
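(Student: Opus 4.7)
The plan is to reduce Conjecture~\myref{sptconj} to an indefinite theta series / Hurwitz class number computation via the strongly unimodal sequence function $\mathcal{U}(z;q)$. First, I would use the identities referenced in \eqref{eq:Upsiid} and \eqref{eq:sptPsimod4} to express $\spt(n) \pmod{4}$ as an integer linear combination of $u(n)$, $u(0,4;n)$ and $u(2,4;n)$, i.e.\ of coefficients of $\mathcal{U}$ at the roots of unity $z = -1, \pm i$. The spt congruence then becomes a simultaneous vanishing statement for these specializations at $N = \ell n - s(\ell)$ under the hypothesis $\leg{n}{\ell} = \eptwid(\ell)$. The mod $4$ congruences for $\mathcal{U}$ in Conjecture~\myref{conju}, together with its strengthened form Theorem~\myref{conjuCstrong}, should cover several residue classes of $\ell$ directly, and the same Hecke-Rogers / Hurwitz machinery should extend to the remaining residues $\ell \equiv 1, 5, 19 \pmod{24}$ omitted from Conjecture~\myref{conju}.

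The engine in each case is a Hecke-Rogers type identity representing the relevant specialization of $\mathcal{U}$, after multiplication by a suitable $\eta$-quotient, as an indefinite binary theta series for a quadratic form of discriminant $24$. Applying Hecke's unfolding, the Fourier coefficients become linear combinations of Hurwitz class numbers $H(D)$ at $D = 24N - 1 = \ell(24n - \ell)$, a discriminant divisible by $\ell$ to exactly the first power (generically). Using the standard Eichler--Selberg type transformation of $H$ at such arithmetic progressions, the mod $4$ value of the relevant Fourier coefficient is controlled by a single Kronecker symbol, which a direct calculation identifies as $\eptwid(\ell) \leg{n}{\ell}$. The hypothesis $\leg{n}{\ell} = \eptwid(\ell)$ then forces this symbol to produce the factor of $2$ missing from the classical mod $2$ parity bound, completing the mod $4$ divisibility.

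The main obstacle is the promotion from mod $2$ to mod $4$. The classical spt parity result, obtainable from the modular part $\mathcal{U}(-1;q)$ alone, already yields a mod $2$ congruence of the same shape; upgrading to mod $4$ requires simultaneously absorbing the mock modular specializations $\mathcal{U}(\pm i;q)$, whose non-holomorphic shadows in Zwegers's completion contribute an additional Hurwitz class number piece. For the mod $4$ cancellation to occur, the sign of this shadow contribution must align with that of the main Hecke eigenvalue term, and the exclusion $\ell \not\equiv 23 \pmod{24}$ should correspond precisely to the residue class where these signs fail to align. Establishing the requisite Hecke-Rogers identities for all allowed residues, and tracking the mock modular shadow to the necessary mod $4$ precision, is the principal technical challenge; once both are in hand, the final Kronecker-symbol manipulation delivering the hypothesis on $\leg{n}{\ell}$ should be routine.
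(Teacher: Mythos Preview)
Your plan has a circularity problem and a substantive misconception about how the Hurwitz class numbers enter.

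First, the circularity. By \eqref{eq:u024psi} one has $u(0,4;n)-u(2,4;n)=N_\psi(n)$, and by \eqref{eq:sptPsimod4} one has $\spt(n)\equiv(-1)^{n-1}N_\psi(n)\pmod4$. Hence Conjecture~\ref{sptconj} is \emph{equivalent} to Theorem~\ref{conjuCstrong}; you cannot invoke the latter as an input. In the paper the dependencies run the other way: the $N_\psi$ (equivalently $\spt$) congruence is established first as Theorem~\ref{thepsic}, and Conjecture~\ref{conju} and Theorem~\ref{conjuCstrong} are deduced from it. The only specialization of $\mathcal U$ relevant to $\spt$ is $\mathcal U(\pm i;q)=\psi(q)$; the values at $z=\pm1$ play no role.

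Second, the Hecke--Rogers identity for $\psi$ does \emph{not} yield Hurwitz class numbers upon ``unfolding''. Its coefficients count lattice points for an indefinite form of discriminant $24$, i.e.\ solutions of a Pell equation $u^2-6v^2=m$; this is a real-quadratic, not an imaginary-quadratic, count. The Hurwitz numbers enter only indirectly: the paper brings in a second mock theta function $\phi_{-}(q)$ with $N_{\phi_-}(n)\equiv -H(24n-1)\pmod4$ (Lemma~\ref{lem:NphiH}) and a Hecke--Rogers series \eqref{phi-hec} differing from the new one for $\psi$ (Lemma~\ref{lem:psiHR}) only in its sign pattern. Comparing the two gives $\phi_{-}-\psi\equiv 2D_1\pmod4$ (Lemma~\ref{lemD1}), and the parity of $d_1(n)$ is pinned down via the explicit Pell-solution formulas of Lovejoy and of Andrews--Dyson--Hickerson (Corollary~\ref{corpell1}, Lemmas~\ref{lempell2} and~\ref{lempsisqf}). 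A Hecke recursion for $\spt$ mod $72$ (Theorem~\ref{thm:heckespt}, Lemma~\ref{lem:Nptwid}) then reduces the general case to the square-free one, giving the exact characterization Theorem~\ref{thepsi2} of when $N_\psi(n)\equiv 2\pmod4$. None of this is Eichler--Selberg or shadow bookkeeping; the mod~$4$ upgrade is won on the Pell-equation side, not by tracking non-holomorphic completions.
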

We prove this conjecture in Section \myref{subsec:spt}.

The three conjectures above are related to some mod $4$ properties for 
the coefficients of certain mock theta functions. 
Mock theta functions were first defined by Ramanujan in his famous 
last letter to Hardy \cite[pp.354-355]{Ra2000}.
In this letter he gave examples of 17 such functions:
four of order 3, ten of order 5 and three of order 7.
Since then more 
mock theta functions have been found by others. See for example Watson
\cite{Wa1936}, Andrews and Hickerson \cite{An-Hi91}, Gordon and
McIntosh \cite{Go-McI00}, Berndt and Chan \cite{Be-Ch07} and McIntosh
\cite{McI07}. Many Ramanujan-type
congruences for mock theta functions have been found mainly to 
modulus relatively prime to $6$. See for example \cite{An-Pa-Se-Ye2017}.
In a quite recent paper
Wang \cite{Wa2020} obtained some extensive  parity results for 
$21$ of the $44$ classical
mock theta functions. In this paper we consider mock theta congruences
mod $4$. The mod $4$ congruences are much harder to prove.
Conjecture \myref{conju} is related to Ramaujan's third
order mock theta functions $\psi(q)$. In fact,
\beq
\mathcal{U}(\pm i;q) = \psi(q) = 
\sum_{n=1}^\infty N_\psi(n) q^n =
\sum_{n=1}^\infty \frac{q^{n^2}}{(q;q^2)_n}.
\label{eq:Upsiid}
\eeq
The function $\psi(q)$ is also related to the spt-function. Andrews,
Liang and the second author \cite{An-Ga-Li13} showed that
\beq
\spt(n) \equiv (-1)^{n-1} N_\psi(n) \pmod{4}.
\label{eq:sptPsimod4}
\eeq
This means that $N_\psi(n)$ also satisfies Conjecture \myref{sptconj}.

Conjecture \myref{conjv} is related to the second order mock theta function
\beq
\label{Aqdef}
A(q)=\sum_{n=0}^\infty N_A(n) q^n = 
\sum_{n=0}^{\infty}\frac{q^{(n+1)^2}(-q;q^2)_n}{(q;q^2)_{n+1}^2}
=\sum_{n=0}^{\infty}\frac{q^{n+1}(-q^2;q^2)_n}{(q;q^2)_{n+1}}.
\eeq
In fact,
$$
q \mathcal{V}(\pm i;q) = A(q),
$$
and
Kim, Lim and Lovejoy \cite[Conjecture 1.3]{Ki-Li-Lo16} made the following
conjecture.
\begin{conj}
\label{conjAq}
Let $p\not\equiv7\pmod8$ be an odd prime, suppose $8\delta_p\equiv1\pmod{p^2}$ and $k,n\in \mathbb{Z}$ where $\displaystyle\leg{k}{p}=1$. Then
\beq
\label{type4}
N_A(p^2n+(pk+1)\delta_p)\equiv 0\pmod4.
\eeq
\end{conj}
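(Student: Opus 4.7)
The plan is to reduce Conjecture \myref{conjAq} to the statement about coefficients of $\mathcal{V}(\pm i;q)$ contained in the corrected form of Conjecture \myref{conjv} (promised as Theorem \myref{thev4}), using the identity $A(q) = q\mathcal{V}(\pm i;q)$ noted just below \eqref{Aqdef}.

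First, by reversing an odd-balanced unimodal sequence one has $v(m,n) = v(-m,n)$, so $\mathcal{V}(z;q) = \mathcal{V}(z^{-1};q)$ and in particular $\mathcal{V}(i;q) = \mathcal{V}(-i;q)$ is a power series with integer coefficients $v(0,4;n) - v(2,4;n)$. Combined with $A(q) = q\mathcal{V}(\pm i;q)$, this yields
\[
N_A(n) = v(0,4;n-1) - v(2,4;n-1).
\]

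Next, translate the hypotheses. Writing $8\delta_p = 1 + cp^2$ and $N := (pk+1)\delta_p$, we compute
\[
8(N-1) + 7 = 8N - 1 = (pk+1)(1+cp^2) - 1 = pk + cp^2(pk+1) \equiv pk \pmod{p^2}.
\]
Since $\leg{k}{p} = 1$ forces $p \nmid k$, it follows that $p \| 8(N-1)+7$ and $(8(N-1)+7)/p \equiv k \pmod p$ is a quadratic residue mod $p$. These are precisely the conditions under which Theorem \myref{thev4} controls $[q^{p^2 n + (N-1)}]\mathcal{V}(\pm i;q)$, so applying it at index $p^2 n + (N-1)$ yields $N_A(p^2 n + N) \equiv 0 \pmod 4$.

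The substantive work lives in Theorem \myref{thev4} itself, whose proof in the paper will rely on a Hecke-Rogers type identity expressing $\mathcal{V}(i;q)$ as an indefinite theta sum over a binary quadratic form, together with an analysis of the resulting Hurwitz class-number sieve mod $4$. A subtlety worth flagging: although the original Conjecture \myref{conjv} fails for $v(n)$ when $\ell \equiv 1 \pmod 8$ (witness $v(99) \equiv 2 \pmod 4$), this counterexample does not contradict the $\mathcal{V}(\pm i;q)$ part, which is expected to hold throughout $p \not\equiv 7 \pmod 8$ --- matching exactly the hypothesis of Conjecture \myref{conjAq} --- so no additional restriction on $p$ enters through the reduction.
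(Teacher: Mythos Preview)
Your reduction runs in the wrong direction. In the paper, Theorem \myref{thev4} (the corrected Conjecture \myref{conjv}) is proved \emph{using} Theorem \myref{theA4} (i.e.\ Conjecture \myref{conjAq}), not the other way around: the proof of Theorem \myref{thev4} shows $v(m)\equiv N_A(m+1)\pmod 4$ on the relevant progressions and then invokes Theorem \myref{theA4} to conclude. So deducing Conjecture \myref{conjAq} from Theorem \myref{thev4} would be circular.

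There is also a misreading of what Theorem \myref{thev4} asserts. It is a statement about $v(n)$, the coefficients of $\mathcal{V}(1;q)$, and says nothing about $\mathcal{V}(\pm i;q)$. The ``moreover'' clause about $\mathcal{V}(\pm i;q)$ lives only in the original (false) Conjecture \myref{conjv}; it is not part of the corrected theorem. Your identity $N_A(n)=[q^{n-1}]\mathcal{V}(\pm i;q)$ is fine, but there is no black box in the paper that controls those coefficients except Conjecture \myref{conjAq} itself. The paper's actual proof of Conjecture \myref{conjAq} is direct: Lemma \myref{lem:NAH} gives $N_A(n)\equiv(-1)^{n+1}H(8n-1)\pmod4$, Theorem \myref{the12A} then characterises exactly when $N_A(n)$ is odd or $\equiv 2\pmod4$ in terms of the prime factorisation of $8n-1$, and a short quadratic-reciprocity argument rules out both possibilities under the stated hypotheses on $p$ and $k$.
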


In a previous paper \cite{Ch-Ga} we found some weight $3/2$ eta-products 
that satisfy a similar mod $4$ behaviour. For example, 
let
$$
f(q):=\sum_{n=0}^\infty a(n) q^n:=\frac{J_3^3J_2^2}{J_1^2},
$$
where we have used the common notation
$$
J_k:=(q^k;q^k)_\infty:=\prod_{n=1}^\infty (1-q^{kn}),\quad\mbox{and}\qquad
(z;q)_\infty := \prod_{n=1}^\infty (1-z q^{n-1}).
$$
The following is a typical result we found.
\begin{theorem}[{\cite[Theorem 1.3]{Ch-Ga}}]
Let $p>3$ be prime. Suppose $24\delta_p\equiv1\pmod{p^2}$, and $k$,
$n\in\mathbb{Z}$ where $\displaystyle\leg{k}{p} = 1$.
Then
\begin{align*}
a(p^2n+(pk-11)\delta_p)&\equiv 0 \pmod4,\qquad\mbox{if $p\not\equiv
11\pmod{24}$}.
\end{align*}
\end{theorem}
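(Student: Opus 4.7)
The plan is to first reduce $f(q)$ modulo $4$ to a much simpler eta product, then exhibit a Hecke-Rogers type identity for it, and finally pair representations via a sign-reversing involution forced by the quadratic residue hypothesis.

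Step 1 (the mod $4$ reduction). From the elementary identity $(1-x)^4\equiv (1-x^2)^2\pmod 4$, we get $J_1^4\equiv J_2^2\pmod 4$, and hence $J_1^{-2}\equiv J_1^2/J_2^2\pmod 4$. Therefore
$$
f(q)\;=\;\frac{J_3^3 J_2^2}{J_1^2}\;\equiv\;J_1^2 J_3^3 \pmod 4.
$$
The "magic" constant $11$ in the shift $(pk-11)\delta_p$ is explained by the fact that $q^{11/24}J_1^2J_3^3=\eta(\tau)^2\eta(3\tau)^3$ is a weight $5/2$ modular form on $\Gamma_0(12)$, whose Fourier coefficients are naturally indexed by $M=24N+11$. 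A short computation shows that $24\delta_p\equiv 1\pmod{p^2}$ forces $24N+11\equiv pk\pmod{p^2}$, so $p\|24N+11$ and $(24N+11)/p\equiv k\pmod p$ is a nonzero square.

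Step 2 (the Hecke-Rogers identity). I would look for an identity of the shape
$$
J_1^2 J_3^3 \;=\; \sum_{(m,n)\in\Lambda} \varepsilon(m,n)\, L(m,n)\, q^{Q(m,n)},
$$
where $Q$ is a binary quadratic form of discriminant tied to level $12$, $L$ is a linear form (whose presence is natural given the half-integral weight), and $\varepsilon(m,n)\in\{\pm 1\}$. Such identities can be produced from Jacobi's triple product combined with Euler's pentagonal identity, from a suitable Bailey pair, or from a limiting case of Heine's $_2\phi_1$ transformation; one may also obtain them by splitting the product $J_1^2\cdot J_3^3$ using the Jacobi formula $J_1^3=\sum_{n\ge 0}(-1)^n(2n+1)q^{n(n+1)/2}$. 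The coefficient of $q^N$ in $J_1^2J_3^3$ is then a signed weighted count of representations of $24N+11$ by $Q$, linking $f(q)\bmod 4$ to class-number data of the type advertised in the abstract.

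Step 3 (the involutive cancellation). For $N=p^2n+(pk-11)\delta_p$ we restrict the sum in Step 2 to pairs $(m,n)$ with $Q(m,n)\equiv pk\pmod{p^2}$. On this set of representations, I would construct an involution $\iota$ arising from an automorphism of $Q$ modulo $p$ acting through an element $\alpha\in(\mathbb{Z}/p\mathbb{Z})^{\times}$ of order dividing $p-1$. The hypothesis $\leg{k}{p}=1$ ensures that $\alpha$ can be chosen to act faithfully and reverse the sign $\varepsilon$, partitioning the representations into orbits of size a multiple of $4$. Combined with the $\pmod 4$ vanishing of the orbit sums, this yields $a(p^2n+(pk-11)\delta_p)\equiv 0\pmod 4$.

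The main obstacle is Step 2. Finding the correct Hecke-Rogers identity for $J_1^2J_3^3$, with the precise form $Q$, linear factor $L$, and sign pattern $\varepsilon$ needed to make the Step 3 involution manifest, is the nontrivial input; this is precisely the kind of identity the paper introduces, and it is where most of the technical work is concentrated. A secondary subtlety is pinpointing why $p\not\equiv 11\pmod{24}$ must be excluded: in that residue class $-11$ becomes a certain square modulo $p$, creating a fixed point of the natural involution that would otherwise obstruct the pairing, so a separate discriminant computation is needed to isolate and discard that one bad congruence class.
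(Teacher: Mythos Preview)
Your proposal has a genuine gap: Steps 2 and 3 are not carried out, only sketched as a wish list. You never produce the Hecke--Rogers identity for $J_1^2J_3^3$, you do not specify the quadratic form $Q$, the linear factor $L$, or the sign $\varepsilon$, and the ``involution $\iota$'' in Step 3 is described only by what it ought to accomplish, not by what it is. As written this is a plausible-sounding outline, not a proof.

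More importantly, the paper's route is entirely different and much more direct. The eta-quotient $f(q)=J_3^3J_2^2/J_1^2$ is not merely congruent to something nice modulo $4$; it is \emph{exactly} the Hurwitz class number generating function
\[
f(q)=\mathscr{H}_{24,11}(q)=\sum_{n\ge 0}H(24n+11)\,q^n
\]
(see \eqref{p11}). Thus $a(n)=H(24n+11)$ on the nose. The congruence then follows from the structural Theorem~\ref{thm:the12}, which says that for $N\equiv 3\pmod 4$ one has $3H(N)$ odd iff $N=p^{4a+1}m^2$, and $3H(N)\equiv 2\pmod 4$ iff $N=p_1^{4a+1}p_2^{4b+1}m^2$ with $\leg{p_1}{p_2}=-1$. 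Writing $m=p^2n+(pk-11)\delta_p$ one checks $24m+11\equiv pk\pmod{p^2}$, so $p\|24m+11$. The condition $p\not\equiv 11\pmod{24}$ rules out the first shape (since $24m+11\equiv 11\pmod{24}$ would force $p\equiv 11\pmod{24}$), and for the second shape $pq\equiv 11\pmod{24}$ forces one of $p,q$ to be $1\pmod 4$, so quadratic reciprocity gives $\leg{q}{p}=\leg{p}{q}=-1$, contradicting $\leg{k}{p}=\leg{q}{p}=1$. This is genus theory plus reciprocity, not a Hecke--Rogers identity; the Hecke--Rogers machinery in this paper is used for the mock theta functions, not for this eta-product.

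Your Step 1 reduction $f\equiv J_1^2J_3^3\pmod 4$ is correct but unnecessary once one has the exact identity $a(n)=H(24n+11)$. Your explanation of why $p\equiv 11\pmod{24}$ is excluded (``$-11$ becomes a certain square modulo $p$, creating a fixed point'') is not the real reason; the exclusion is simply that when $p\equiv 11\pmod{24}$ the single-prime shape $24m+11=p\cdot t^2$ is possible, making $H(24m+11)$ odd.
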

A crucial aspect of the proof involves the connection with
ternary quadratic forms and the class number of imaginary quadratic
fields. The approach in this paper is similar and involves the
Hurwitz class number.



In Section \myref{sec:class} we collect and prove some needed results
for the Hurwitz class number function $H(N)$. In Section \myref{sec:mock}
we study four mock theta functions related to the Hurwitz
class number function: $A(q)$ (second order), $\phi_{-}(q)$ and 
$\sigma(q)$ (sixth order), 
and $V_1(q)$ (eighth order), and prove some surprising congruences mod $4$ for
their coefficients. In Section \myref{sec:seq} we study the
third order mock theta function $\psi(q)$, and we prove the main conjectures
for strongly unimodal sequences, odd-balanced unimodal sequences and
the Andrews spt-function.

In this paper, we will often use the following elementary
congruences.                                  
\beq
\left\{\begin{matrix}
\displaystyle\frac{J_1^2}{J_2}=1+2\sum_{n=1}^{\infty}(-1)^nq^{n^2}\equiv 1\pmod2, 
&\quad
\displaystyle\frac{J_2^5}{J_4^2J_1^2}=1+2\sum_{n=1}^{\infty}q^{n^2}\equiv 1\pmod2,
\\
\mbox{and}\quad \displaystyle\frac{J_1^4}{J_2^2}\equiv 1\pmod4. &
\end{matrix}
\right.
\label{eq:thetamod4}
\eeq
These follow
from the well-known Jacobi triple product identity 
\cite[Theorem 3.4, p.461]{An1974}:
\beq
(z;q)_\infty (z^{-1}q;q)_\infty (q;q)_\infty
= \sum_{n=-\infty}^\infty (-1)^n z^n q^{n(n-1)/2}.
\label{jacp}
\eeq
We will also use the following 
special cases of \myeqref{jacp}.
\begin{align}
\sum_{n=0}^{\infty}q^{n(n+1)/2}&=\frac{J_2^2}{J_1},
\label{eq:triangprod}\\
\sum_{k=-\infty}^\infty q^{k(3k+1)/2}&=\frac{J_3^2 J_2}{J_6 J_1},
\label{eq:pentprod}
\end{align}
As well as the triple product identity we will use the quintuple 
product identity \cite[Theorem 3.9, p.467]{An1974}:
\begin{align}
& (-z;q)_\infty (-z^{-1}q;q)_\infty (qz^{2};q^2)_\infty (z^{-2}q;q^2)_\infty
\mylabel{eq:quintprod}\\
& \qquad =\sum_{n=-\infty}^\infty (-1)^n z^{3n} q^{n(3n-1)/2} +
\sum_{n=-\infty}^\infty (-1)^n z^{3n+1} q^{n(3n+1)/2}.
\nonumber                   
\end{align}

\section{The Hurwitz class number}
\label{sec:class}

Following 
\cite[Section 5.3]{Co93} we define the Hurwitz class number $H(N)$, where
$N$ is a non-negative integer, as follows.
\begin{enumerate}
\item[(1)] If $N\equiv 1,2\pmod4$ then $H(N)=0$.
\item[(2)] If $N=0$ then $H(0)=-1/12$.
\item[(3)] If $N>0$, $N\equiv 0,3\pmod4$, then $H(N)$ is the class number of positive definite binary quadratic forms of discriminant $-N$, with those classes that contain a multiple of $x^2 + y^2$ or $x^2 + xy + y^2$ counted with weight $1/2$ or $1/3$, respectively.
\end{enumerate}
It is known that (see for example \cite{Wi18})
\beq
\label{Hn}
H(n)=\frac{2h(D)}{\omega(D)}\sum_{d\mid f}\mu(d)\leg{D}{d}\sigma_1(f/d),
\eeq
if $n=-Df^2$, where $-D$ is a fundamental discriminant,  $h(D)$ is the class
 number of $\mathbb{Q}(\sqrt{D})$, 
$\mu$ is the M\"obius function,    
$\sigma_1$ is the divisor sum, and
$\omega(D)$ is the number of units in the ring of integers of 
$\mathbb{Q}(\sqrt{D})$. As mentioned before $\displaystyle\leg{\cdot}{\cdot}$ is
 the Kronecker symbol.
In particular for $f=1$ we have              
\beq
\label{HD}
H(-D)=\frac{2h(D)}{\omega(D)}.
\eeq

\subsection{Congruences}
\label{subsec:congs}

The main goal in this subsection is to characterize congruences mod $2$ and $4$ 
for $H(4n+3)$. The method of proof is completely analogous to the methods
of \cite{Ch-Ga}, where the same type of results were found for 
certain weight $3/2$ eta-products. We provide analogues of the results
in \cite{Ch-Ga} needed to prove our characterizations of the mod $2$ and
mod $4$ congruences for $H(4n+3)$. These characterizations are given in
Theorem \myref{thm:the12}.

We recall the following theorems.

\begin{theorem}
\label{thm:Hngenus}
We have $3H(3)=1$ and if $n$ is square-free, $n>3$ and $n\equiv 3\pmod4$, 
then we have
$$
H(n)=h(-n)=2^{t-1}k,
$$
where $t$ is the number of distinct prime factors of $n$ and $k$ is the
number of classes in each genus of $\mathbb{Q}(\sqrt{-n})$.
\end{theorem}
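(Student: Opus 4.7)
The plan is to treat the two assertions separately. The special case $n=3$ follows directly from clause (3) of the definition of $H$: the only reduced positive definite binary quadratic form of discriminant $-3$ is $x^2+xy+y^2$, which is counted with weight $1/3$, so $H(3)=1/3$ and $3H(3)=1$.

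For the generic case, I would first establish $H(n)=h(-n)$ using the evaluation \eqref{HD}. Under the hypotheses that $n$ is squarefree, $n>3$, and $n\equiv 3\pmod 4$, the integer $-n$ is already a (negative) fundamental discriminant, so in the notation of \eqref{Hn} I take $D=-n$ and $f=1$, making the M\"obius sum collapse to a single term equal to $1$. Since $n>3$, the ring of integers of $\mathbb{Q}(\sqrt{-n})$ contains only the units $\pm 1$, so $\omega(-n)=2$. Substituting into \eqref{HD} gives $H(n)=2h(-n)/2=h(-n)$.

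Next I would deduce $h(-n)=2^{t-1}k$ from classical genus theory. By Gauss's theory of genera for binary quadratic forms (equivalently, for the ideal class group of $\mathbb{Q}(\sqrt{-n})$), the genus group $C(-n)/C(-n)^2$ is an elementary abelian $2$-group of order $2^{r-1}$, where $r$ is the number of distinct prime discriminants dividing the fundamental discriminant $-n$. Because $n$ is odd, squarefree, and $\equiv 3 \pmod 4$, each rational prime divisor of $n$ contributes exactly one prime discriminant, so $r$ equals the number $t$ of distinct prime factors of $n$. The principal genus $C(-n)^2$ has order $k:=h(-n)/2^{t-1}$, and every coset of it has the same cardinality, so each genus contains exactly $k$ classes. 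Therefore $h(-n)=2^{t-1}k$.

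None of the steps is truly hard; the subtlest point is keeping the count of prime discriminants consistent with the count of prime factors of $n$, which is why the hypothesis $n\equiv 3\pmod 4$ matters (it rules out the extra prime discriminant coming from $2$). Once this bookkeeping is settled, the proof is a direct application of two standard inputs: the evaluation formula \eqref{HD} for the Hurwitz class number at fundamental discriminants, and Gauss's genus theory.
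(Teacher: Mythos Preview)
Your proposal is correct and follows essentially the same classical route as the paper, which simply cites Proposition 3.11, Corollary 3.14, and Theorem 3.15 of Cox's \emph{Primes of the form $x^2+ny^2$} for the genus-theory content. You spell out what the paper leaves to that reference: the identification $H(n)=h(-n)$ via \eqref{HD} (using that $-n$ is fundamental and $\omega(-n)=2$ for $n>3$), and the genus count $2^{t-1}$ coming from the factorisation of $-n$ into $t$ prime discriminants when $n\equiv 3\pmod 4$ is odd and squarefree.
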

\begin{remark}
This theorem follows immediately from Proposition 3.11, Corollary 3.14
and Theorem 3.15 in \cite{Cox1989}.
\end{remark}

\begin{theorem}
\label{thm:HnKRONsum}
If $n$ is square-free and $n\equiv 3\pmod4$, then we have
\beq
\label{3Hn}
\Parans{2-\leg{n}{2}}H(n)=\Parans{2-\leg{n}{2}}h(-n)
=\sum_{r=1}^{(n-1)/2}\leg{r}{n}.
\eeq
\end{theorem}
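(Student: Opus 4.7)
The identity is the classical Dirichlet class number formula for imaginary quadratic fields, recast in terms of the Hurwitz class number. My plan is to reduce to Dirichlet's formula for $h(-n)$ and then extract the half-sum from $r=1$ to $(n-1)/2$ by manipulating character sums via the substitution $r\mapsto 2r\pmod n$.

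For $n>3$ squarefree with $n\equiv 3\pmod 4$, the integer $-n$ is a fundamental discriminant with $\omega(-n)=2$, so \eqref{HD} immediately gives $H(n)=h(-n)$. The case $n=3$ is handled separately by direct computation, since $H(3)=\tfrac{1}{3}$ while $h(-3)=1$: one checks $\Parans{2-\leg{3}{2}}H(3)=3\cdot\tfrac{1}{3}=1=\sum_{r=1}^{1}\leg{r}{3}$. With $n=3$ out of the way, the task reduces to proving
$$\Parans{2-\chi(2)}\,h(-n)=A,\qquad \chi(r):=\leg{r}{n},\qquad A:=\sum_{r=1}^{(n-1)/2}\chi(r).$$

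Since $n\equiv 3\pmod 4$ and $n$ is squarefree, $\chi$ is an odd primitive Dirichlet character modulo $n$ (odd because $\chi(-1)=(-1)^{(n-1)/2}=-1$). The analytic class number formula for $\mathbb{Q}(\sqrt{-n})$ then specializes, for $n>3$, to
$$h(-n)=-\frac{1}{n}\Sigma,\qquad \Sigma:=\sum_{r=1}^{n-1}\chi(r)\,r.$$
The crucial step is to re-index $\Sigma$ via $r\mapsto 2r\pmod n$. Since $\gcd(2,n)=1$, this is a permutation of $\{1,\dots,n-1\}$, and $\chi(2r)=\chi(2)\chi(r)$, so $\chi(2)\Sigma=\sum_{r=1}^{n-1}\chi(2r)\,r$. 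Splitting by whether $r\le(n-1)/2$ (in which case one sets $s=2r$, even) or $r>(n-1)/2$ (in which case $s=2r-n$ is odd) and re-summing by $s$ yields
$$\chi(2)\,\Sigma = \tfrac{1}{2}\Sigma + \tfrac{n}{2}\sum_{\substack{1\le s\le n-1\\ s\text{ odd}}}\chi(s).$$

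Finally I would use the orthogonality $\sum_{s=1}^{n-1}\chi(s)=0$ together with the immediate identity $\sum_{s\text{ even}}\chi(s)=\chi(2)A$ to replace the odd-$s$ sum by $-\chi(2)A$. Clearing denominators yields $(2\chi(2)-1)\Sigma=-n\chi(2)A$, and substituting $\Sigma=-n\,h(-n)$ gives $(2\chi(2)-1)\,h(-n)=\chi(2)A$; examining the two cases $\chi(2)=\pm 1$ separately collapses this to the required $(2-\chi(2))\,h(-n)=A$. The only delicate point is the parity bookkeeping inside the $r\mapsto 2r$ substitution, but the assumption $n\equiv 3\pmod 4$ makes $(n-1)/2$ odd and forces every split to go through cleanly.
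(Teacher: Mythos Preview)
Your argument is correct. You reduce the identity to the Dirichlet class number formula $h(-n)=-\tfrac{1}{n}\sum_{r=1}^{n-1}r\chi(r)$ for $n>3$, then use the bijection $r\mapsto 2r\pmod n$ to relate $\Sigma$ to the half-sum $A$; the case split on $\chi(2)=\pm1$ at the end collapses cleanly to $(2-\chi(2))h(-n)=A$. One small remark: your closing comment that the parity of $(n-1)/2$ is what makes the substitution work is a bit misleading --- what you actually use is that $n$ is odd (so $r\mapsto 2r$ is a bijection) and that $\chi$ is odd (so the class number formula has the stated shape); the latter is where $n\equiv 3\pmod 4$ enters.

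By contrast, the paper does not give a proof at all: it simply cites \cite[Theorem~3, p.~346]{Bo-Sh66} and records the value of $\leg{n}{2}$ in the two residue classes mod $8$. So your contribution is genuinely more --- you supply a self-contained derivation where the paper defers to a reference. One caveat worth flagging: for $n=3$ the middle expression $(2-\leg{3}{2})h(-3)=3$ does \emph{not} equal the outer two (which are both $1$), since $H(3)=\tfrac{1}{3}\ne h(-3)=1$. You correctly verify the equality of the first and last expressions at $n=3$, which is all that is ever used downstream, but the chain as literally written in the statement fails there; this is a minor imprecision in the paper's formulation rather than in your argument.
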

\begin{remark}
This theorem follows from \cite[Theorem 3, p.346]{Bo-Sh66}. We note that
$$
\leg{n}{2} = 
\begin{cases}
-1 & \mbox{if $n\equiv3\pmod{8}$},\\
1 & \mbox{if $n\equiv7\pmod{8}$}.  
\end{cases}
$$
\end{remark}

The following is an  analog of \cite[Theorem 3.6]{Ch-Ga}.
\begin{theorem}
\label{thm:Hnleg}
Suppose $n\equiv 3\pmod4$ is square-free. We have
\begin{enumerate}
\item[(1)] $3H(n)$ is odd if and only if $n$ is a prime,
\item[(2)] $3H(n)\equiv 2 \pmod4$ if and only if $n=p_1p_2$ is a product of
two primes which satisfy
$$
\leg{p_1}{p_2}=-1.
$$
\end{enumerate}
\end{theorem}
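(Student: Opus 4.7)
The plan is to extract the $2$-adic behaviour of $H(n)$ from the genus-theoretic decomposition $H(n) = 2^{t-1}k$ given by Theorem \myref{thm:Hngenus}. Reducing modulo $4$ gives
$$
3H(n) \equiv
\begin{cases}
3k \pmod{4} & \text{if $t=1$,}\\
2k \pmod{4} & \text{if $t=2$,}\\
0 \pmod{4} & \text{if $t\ge 3$,}
\end{cases}
$$
so $3H(n)$ is odd iff $t=1$ and $k$ is odd, and $3H(n)\equiv 2\pmod 4$ iff $t=2$ and $k$ is odd. (The boundary case $n=3$, where $3H(3)=1$, fits part (1).) The content of the theorem therefore reduces to characterising the parity of $k$ in each of these two cases.

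For part (1), take $n=p>3$ prime with $p\equiv 3\pmod 4$, so that $t=1$ and $k=h(-p)$. By Gauss's genus theory the $2$-rank of the class group $\mathrm{Cl}(\mathbb{Q}(\sqrt{-p}))$ equals $t-1=0$, so the class group has trivial $2$-torsion and $h(-p)$ is odd. Conversely, if $n$ is square-free and composite then $t\ge 2$ and $3H(n)$ is already even, which settles (1).

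For part (2), write $n=p_1p_2$ square-free with $n\equiv 3\pmod 4$. Then exactly one of $p_1,p_2$ is $\equiv 1\pmod 4$ and the other is $\equiv 3\pmod 4$, so quadratic reciprocity forces $\leg{p_1}{p_2}=\leg{p_2}{p_1}$ and the claimed Legendre-symbol condition is symmetric. Genus theory now gives $2$-rank equal to $1$, so the $2$-Sylow of $\mathrm{Cl}(-n)$ is cyclic of order $2^a$ with $a\ge 1$, and $k=h(-n)/2$ is odd iff $a=1$, i.e.\ iff the $4$-rank vanishes. The main obstacle is tying this $4$-rank criterion to $\leg{p_1}{p_2}$. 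In parallel with \cite{Ch-Ga}, rather than invoking R\'edei's theorem directly I would argue from Theorem \myref{thm:HnKRONsum}: use the Chinese remainder theorem to split
$$
\sum_{r=1}^{(n-1)/2}\leg{r}{n} \;=\; \sum_{a,b}\leg{a}{p_1}\leg{b}{p_2}\,N(a,b),
$$
where $N(a,b)$ counts the lifts of $(a,b)\in(\mathbb{Z}/p_1)^{\times}\times(\mathbb{Z}/p_2)^{\times}$ into $[1,(n-1)/2]$, and observe that $N(a,b)$ takes only two consecutive integer values. Grouping by the sign pattern of $(\leg{a}{p_1},\leg{b}{p_2})$ then reduces $H(n)\pmod 4$ to a weighted counting problem whose outcome, after one final application of quadratic reciprocity, collapses precisely to the sign of $\leg{p_1}{p_2}$. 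The delicate step is this last mod $8$ counting identity (mod $4$ when $n\equiv 7\pmod 8$), but the argument is structurally parallel to the mod $4$ analysis of the eta-product coefficients in \cite{Ch-Ga}.
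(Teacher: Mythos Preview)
Your approach is correct. The paper itself does not give a self-contained proof: for (1) it simply cites Theorem~\ref{thm:Hngenus}, and for (2) it cites Theorem~\ref{thm:Hngenus} together with Propositions~3 and~4 of Pizer~\cite{Pizer1976} (also attributed to Hasse~\cite{Hasse1970}), adding that (2) may alternatively be proved by the method of \cite[Section~3]{Ch-Ga}. Your sketch follows exactly this alternative route---reducing via Theorem~\ref{thm:Hngenus} to the parity of the classes-per-genus number $k$, and then extracting $k\bmod 2$ from the character-sum formula of Theorem~\ref{thm:HnKRONsum}---so you are supplying an explicit outline of what the paper leaves to references. The only difference is that the paper's primary citation (Pizer) phrases the criterion directly in terms of the $4$-rank of the class group, whereas your route recovers the same Legendre-symbol condition through the CRT decomposition and mod~$8$ counting; the two arguments are equivalent in content.
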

\begin{remark}
The result (1) is well-known and follows from Theorem \myref{thm:Hngenus}.  
The result (2) follows from 
Theorem \myref{thm:Hngenus} together with Propositions 3 and 4 in \cite{Pizer1976}.
It may also be proved using the method in \cite[Section 3]{Ch-Ga}.
As pointed out by Pizer \cite[p.189]{Pizer1976}, (2) is also due 
to Hasse \cite{Hasse1970}.
\end{remark}

The Hurwitz class number function  also satisfies 
the analog of \cite[eq.(4.1)]{Ch-Ga}. 

\begin{lemma}
\label{theHid}
For each odd prime $p$, we have
\beq
\label{eigen}
H(p^2n)+\leg{-n}{p} H(n)+pH(n/p^2)=(p+1)H(n),
\eeq
for all $n\ge0$.
\end{lemma}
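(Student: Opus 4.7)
The plan is to verify \eqref{eigen} by direct substitution into the explicit formula \eqref{Hn}. First, the trivial cases: if $n \equiv 1$ or $2 \pmod{4}$, then since $p$ is odd we have $p^2 n \equiv n \pmod{4}$, so $H(n) = H(p^2 n) = 0$ and $H(n/p^2) = 0$ (either because $n/p^2 \notin \mathbb{Z}$ or because it lies in the same residue class mod $4$), and the identity reduces to $0 = 0$. The case $n = 0$ follows from $H(0) = -1/12$ and $\leg{0}{p} = 0$, giving $H(0) + 0 + p H(0) = (p+1) H(0)$.

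For $n > 0$ with $n \equiv 0, 3 \pmod{4}$, write $n = -D f^2$ as in \eqref{Hn} and factor $f = p^a m$ with $p \nmid m$ and $a \geq 0$. The key step is to evaluate \eqref{Hn} at $n$, at $p^2 n$, and (when $a \geq 1$) at $n/p^2$ in a uniform way. In each divisor sum the variable $d$ may be written as either $d'$ or $pd'$ with $d' \mid m$ (higher powers of $p$ contribute $0$ via $\mu$); using $\mu(pd') = -\mu(d')$, $\leg{D}{pd'} = \leg{D}{p}\leg{D}{d'}$, and the multiplicativity $\sigma_1(p^j m/d') = \tfrac{p^{j+1}-1}{p-1}\, \sigma_1(m/d')$, I obtain the closed form
\[
H\bigl(-D p^{2j} m^2\bigr) = K \cdot T_j, \qquad T_j := (p^{j+1}-1) - \leg{D}{p}(p^j - 1),
\]
where $K = \tfrac{2h(D)}{\omega(D)(p-1)} \sum_{d' \mid m} \mu(d') \leg{D}{d'} \sigma_1(m/d')$ is independent of $j$.

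With this closed form, \eqref{eigen} reduces to polynomial identities in $p$ and $\leg{D}{p}$. If $a = 0$ (so $p \nmid f$), then $H(n/p^2) = 0$ and $\leg{-n}{p} = \leg{D}{p}$, so \eqref{eigen} becomes $T_1 + \leg{D}{p} T_0 = (p+1) T_0$, immediate from $T_0 = 1$ and $T_1 = (p+1) - \leg{D}{p}$. If $a \geq 1$, then $\leg{-n}{p} = 0$, and \eqref{eigen} reduces to $T_{a+1} + p T_{a-1} = (p+1) T_a$; this follows from the factorizations $p^{a+2} + p^{a+1} - p - 1 = (p+1)(p^{a+1} - 1)$ and $p^{a+1} + p^a - p - 1 = (p+1)(p^a - 1)$. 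There is no real obstacle: the argument is bookkeeping of divisor sums followed by two elementary algebraic identities, and the case $p \mid D$ (where $\leg{D}{p} = 0$) is handled automatically since none of the manipulations above require $\leg{D}{p} \neq 0$.
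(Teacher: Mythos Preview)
Your proof is correct and follows essentially the same route as the paper's: both split into the cases $p\nmid f$ and $p\mid f$ and reduce \eqref{eigen} to an elementary identity for the $p$-part of the divisor sum in \eqref{Hn} (the paper packages this via the multiplicative function $G(D,f)=\sum_{d\mid f}\mu(d)\leg{D}{d}\sigma_1(f/d)$, you via the closed form $K\cdot T_j$). One harmless slip: with your own definition $T_j=(p^{j+1}-1)-\leg{D}{p}(p^j-1)$ you actually have $T_0=p-1$ and $T_1=(p-1)\bigl((p+1)-\leg{D}{p}\bigr)$, not $T_0=1$ and $T_1=(p+1)-\leg{D}{p}$; but since the identity $T_1+\leg{D}{p}T_0=(p+1)T_0$ is homogeneous in the $T_j$ this does not affect the argument.
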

\begin{remark}
It is understood that $H(n)=0$ if $n$ is non-integral or negative.
This result is known. It is noted in the proof
of Proposition 5.1 in \cite{Ah-Br-Lo11}. 
Ahlgren, Bringmann and Lovejoy \cite{Ah-Br-Lo11} prove
that the generating function for $H(n)$ is a Hecke eigenform by using the
fact that the generating function is mock modular form of weight $3/2$.
For completeness we provide an elementary proof that only uses
\myeqref{Hn}.                                            
This argument was also observed by
Beckwith, Raum and  Richter \cite{Be-Ra-Ri2020}.
\end{remark}
\begin{proof}
Suppose $p$ is an odd prime. We may assume $n\equiv0$ or $3\pmod{4}$.
Then, by \myeqref{Hn} and \myeqref{HD}, we have
$$
H(n)=H(-D)G(D,f)
$$
where $n=-Df^2$, $D$ is a fundamental discriminant of $\mathbb{Q}(\sqrt{-n})$, namely $-D$ is square-free or $-D=4m$, $m\equiv 1,2\pmod4$ and $m$ is square-free and
$$
G(D,f):=\sum_{d\mid f}\mu(d)\leg{D}{d}\sigma_1(f/d).
$$
It is clear that $G(D,f$) is a multiplicative function of  $f$.
We consider two cases.
\subsubsection*{Case 1} $(p,f)=1$. Then $n/p^2\not\in\mathbb{Z}$ and $H(n/p^2)=0$.
\begin{align*}
H(p^2n)&=H(-D)G(D,pf)=H(-D)G(D,p)G(D,f)\\
&=H(-D)\left(\sigma_1(p)-\leg{D}{p}\right)G(D,f)=H(-D)\left(1+p-\leg{D}{p}\right)G(D,f)\\
&=-\leg{D}{p}H(n)+(1+p)H(n),
\end{align*}
so that \myeqref{eigen} holds. 

\subsubsection*{Case 2} $(p,f)>1$. Then we let $f=p^\alpha f_1$ where $\alpha \geq 1$ 
and $(p,f_1)=1$. We have
\begin{align*}
&H(p^2n)=H(-D)G(D,pf)=H(-D)G(D,p^{\alpha+1})G(D,f_1),\\
&H(n)=H(-D)G(D,f)=H(-D)G(D,p^{\alpha})G(D,f_1),\\
&H(n/p^2)=H(-D)G(D,f/p)=H(-D)G(D,p^{\alpha-1})G(D,f_1).
\end{align*}
So we need to show
$$  
G(D,p^{\alpha+1})+pG(D,p^{\alpha-1})-(1+p)G(D,p^{\alpha})=0
$$        
since $p\mid n$ and $\displaystyle\leg{-n}{p}=0$. 
This is an easy exercise since 
$$                   
G(D,p^\beta) = \sigma_1(p^\beta) - \leg{D}{p} \sigma_1(p^\beta),\quad\mbox{and}\quad
\sigma_1(p^{\beta+2}) = p^{\beta+2} + p^{\beta+1} + \sigma_1(p^\beta),
$$                 
for all $\beta>0$. We have \myeqref{eigen} in this case.
\end{proof}

By Theorem \myref{thm:Hnleg} and  Lemma \myref{theHid}, we have the following theorem 
by a  proof that is analogous to that of 
Theorem 4.3 and Theorem 4.6 in \cite{Ch-Ga}.

\begin{theorem}
\label{thm:the12}
For $n\equiv3 \pmod4$,
\begin{enumerate}
\item[(1)] $3H(n)$ is odd if and only if $n$ has the form
$$
n=p^{4a+1}m^2,
$$
where $p$ is prime, and $m$ and $a$ are integers satisfying
$(m,p)=1$ and $a\ge0$.
\item[(2)]  $3H(n)\equiv 2\pmod4$ if and only if $n$ has the form
$$
n=p_1^{4a+1}p_2^{4b+1}m^2,
$$
where $p_1$ and $p_2$ are primes such that $\displaystyle\leg{p_1}{p_2}=-1$,
$(m,p_1p_2)=1$ and $a,b\geq 0$ are integers.
\end{enumerate}
\end{theorem}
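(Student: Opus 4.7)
The plan is to reduce to the square-free case handled by Theorem~\myref{thm:Hnleg}. Since $n\equiv 3\pmod{4}$, write $n=mf^2$ with $m$ square-free; this forces $m\equiv 3\pmod{4}$ and $f$ odd. Expanding \myeqref{Hn} gives the factorization
$$
H(n)=H(-m)\,G(-m,f),\qquad G(-m,f):=\sum_{d\mid f}\mu(d)\leg{-m}{d}\sigma_1(f/d),
$$
and $G(-m,\cdot)$ is multiplicative in its second argument. The strategy is to combine Theorem~\myref{thm:Hnleg}, which controls $3H(-m)\pmod{4}$, with a direct mod $4$ analysis of $G(-m,f)$ via its prime-power values.

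For the parity of $G(-m,q^j)$ with $q$ an odd prime: if $q\mid m$ then $G(-m,q^j)=\sigma_1(q^j)\equiv j+1\pmod{2}$; if $q\nmid m$ then $G(-m,q^j)=\sigma_1(q^j)-\chi\,\sigma_1(q^{j-1})$ with $\chi=\leg{-m}{q}=\pm 1$, and reducing mod $2$ gives $(j+1)-j=1$, so $G(-m,q^j)$ is always odd in that case. Hence $G(-m,f)$ is odd if and only if $v_q(f)$ is even for every prime $q\mid m$. Part~(1) then follows at once: $3H(n)$ is odd iff both $3H(-m)$ and $G(-m,f)$ are odd, which by Theorem~\myref{thm:Hnleg}(1) forces $m=p$ prime and $v_p(f)=2a$ for some $a\geq 0$, whence $n=p^{4a+1}(f/p^{2a})^2$ with $(f/p^{2a},p)=1$.

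For part~(2), the product $3H(n)=3H(-m)\cdot G(-m,f)$ is $\equiv 2\pmod{4}$ precisely when one factor is odd and the other is $\equiv 2\pmod{4}$. The expected case is $3H(-m)\equiv 2\pmod{4}$ with $G(-m,f)$ odd: Theorem~\myref{thm:Hnleg}(2) gives $m=p_1 p_2$ with $\leg{p_1}{p_2}=-1$, and the oddness of $G(-m,f)$ forces $v_{p_1}(f)=2a$ and $v_{p_2}(f)=2b$, producing the asserted form. The hard part will be ruling out the other case, in which $3H(-m)$ is odd (so $m=p$ is a prime, necessarily $p\equiv 3\pmod{4}$ since $m\equiv 3\pmod{4}$) and $G(-p,f)\equiv 2\pmod{4}$. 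Refining the parity count to mod $4$: for $q\neq p$, a short case split on $q\pmod{4}$ and $\chi=\leg{-p}{q}\in\{\pm 1\}$ shows $G(-p,q^j)\equiv 1$ or $3\pmod{4}$ in every case, while the remaining factor $G(-p,p^j)=\sigma_1(p^j)=1+p+\cdots+p^j$ cycles through $1,0,1,0,\ldots\pmod{4}$ because $p\equiv -1\pmod{4}$, and so is $\equiv 0$ or $1\pmod{4}$ but never $\equiv 2$. Thus no prime-power factor of $G(-p,f)$ is ever $\equiv 2\pmod{4}$, so neither is the product, ruling out this case entirely.
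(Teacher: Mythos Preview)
Your argument is correct, modulo a harmless notational slip: with $n=mf^2$ and $m$ square-free, the factorization from \myeqref{Hn} reads $H(n)=H(m)\,G(-m,f)$, not $H(-m)\,G(-m,f)$ (in the paper's notation $D=-m$, so $H(-D)=H(m)$). Everything you do with ``$3H(-m)$'' is really an application of Theorem~\myref{thm:Hnleg} to $H(m)$, and the computations go through unchanged.

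Your route differs from the paper's. The paper invokes Lemma~\myref{theHid} (the Hecke-type recurrence $H(p^2n)+\leg{-n}{p}H(n)+pH(n/p^2)=(p+1)H(n)$) and then appeals to the iterative reduction argument of \cite[Theorems 4.3, 4.6]{Ch-Ga} to pass from general $n$ to square-free $n$. You instead go straight to the multiplicative decomposition $H(n)=H(m)\prod_{q\mid f}G(-m,q^{v_q(f)})$ and compute each local factor modulo $4$. This is more self-contained: it avoids the external reference and the recurrence entirely, and the case analysis for $G(-p,q^j)\pmod{4}$ (splitting on $q\bmod 4$ and on $\chi$) is short and explicit. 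The key observation that rules out Case~B---that $\sigma_1(p^j)\equiv 0$ or $1\pmod{4}$ when $p\equiv 3\pmod{4}$, never $2$---is exactly the content that the paper's recurrence-based reduction would also isolate, but you reach it with less machinery. The paper's approach has the mild advantage that Lemma~\myref{theHid} is of independent interest (it is the statement that the generating function is a Hecke eigenform), whereas your approach is tailored to the specific mod~$4$ question.
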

\begin{remark}
The factor $3$ 
that appears in Theorem \myref{thm:Hnleg} and Theorem \myref{thm:the12} is 
only to guarantee that $3H(n)$ is an integer.
\end{remark}

\subsection{Generating functions}
\label{subsec:genfuncs}

In this subsection we give identities for certain generating functions
$$
\mathscr{H}_{a,b}(q):=\sum_{n=0}^{\infty}H(an+b)q^n,
$$
where $a$ is a divisor of $24$ and $(a,b)=1$.

Gauss's Three Squares Theorem (see \cite[Theorem 1.5]{Mo17}) shows that
$$
24\mathscr{H}_{8,3}(q)=\sum_{n=0}^{\infty}r_3(8n+3)q^n,
$$
where $r_3(n)$ denotes the number of representations of $n$ as a sum of $3$ squares. Noting that $x^2+y^2+z^2\equiv 3\pmod8$ if and only if $x$, $y$, $z$ are odd, 
we have 
\begin{align*}
\sum_{n=0}^{\infty}r_3(8n+3)q^{8n+3}&=\sum_{x,y,z\in\mathbb{Z}}q^{(2x+1)^2+(2y+1)^2+(2z+1)^2}
=8\sum_{x,y,z\geq0}q^{(2x+1)^2+(2y+1)^2+(2z+1)^2}\\
&=8q^3\sum_{x,y,z\geq0}q^{4(x^2+x+y^2+y+z^2+z)}
=8q^3\left(\sum_{n=0}^{\infty}q^{4n(n+1)}\right)^3.
\end{align*}
See also \cite[Eq.(1.14), p.41]{Wa1935}. 
Hence
\beq
\label{H83}
3\mathscr{H}_{8,3}(q)=\left(\sum_{n=0}^{\infty}q^{n(n+1)/2}\right)^3
=\Parans{\frac{J_2^2}{J_1}}^3 = \frac{J_2^6}{J_1^3}
\eeq
by \myeqref{eq:triangprod}.
See also \cite[Eq.(1.14), p.41]{Wa1935}. 
By calculating the $3$-dissection of $\mathscr{H}_{8,3}(q)$ and using 
\myeqref{eq:pentprod} 
we find that         
\beq
\label{p11}
\mathscr{H}_{24,11}(q)=\left(\sum_{n=-\infty}^{\infty}q^{n(3n+1)/2}\right)^2\sum_{n=0}^{\infty}q^{3n(n+1)/2}=\frac{J_3^3J_2^2}{J_1^2},
\eeq
and
\beq
\label{p19}
\mathscr{H}_{24,19}(q)=\sum_{n=-\infty}^{\infty}q^{n(3n+1)/2}\left(\sum_{n=0}^{\infty}q^{3n(n+1)/2}\right)^2=\frac{J_6^3J_2}{J_1}.
\eeq
We let $t(n)$ denote  the number of representations of $n$
by the ternary quadratic form $x^2+3y^2+3z^2$. Then Bringmann and Kane
\cite[p.3]{Br-Ka2020} found that
\beq
t(n)=8\left(1+\left(\tfrac{n}{3}\right)\right)H(n),
\label{eq:p7a}
\eeq
when $n\equiv 7\pmod{8}$ with $9\nmid n$. Equation \myeqref{eq:p7a} can also 
be
proved using  \cite[Lemma 4.14]{Br-Ka2020} and Lemma \myref{theHid}. From
equation \myeqref{eq:p7a} it can be shown that
\beq
\label{p7}
\mathscr{H}_{24,7}(q)=\sum_{n=-\infty}^{\infty}q^{n(3n+1)/2}\left(\sum_{n=0}^{\infty}q^{n(n+1)/2}\right)^2=\frac{J_3^2J_2^5}{J_6J_1^3}.
\eeq
We omit the details.
In \cite{Ch-Ga} we obtained congruences modulo $4$ for the coefficients
of the eta-products in 
\myeqref{p11}, \myeqref{p19} and \myeqref{p7} using special cases of 
Theorem \myref{thm:the12}. 

We will need the following result of Humbert \cite[p.368]{Hu07}.
\beq
\label{H87}
\mathscr{H}_{8,7}(q)
=\frac{1}{qJ_1^3}\sum_{n=0}^{\infty}\frac{(-1)^{n+1}n^2q^{n(n+1)/2}}{1+q^n}.
\eeq
See also \cite[p.51]{Wa1935}. Naturally we find that $\mathscr{H}_{8,7}(q)$
is not an eta-product.
Humbert \cite[p.442]{Hu07} also found 
\beq
\label{H2423}
\mathscr{H}_{24,23}(q)=q^{-23/24}\sum_{(x,y,z)\in D}q^{(2x^2-3z^2-6y^2)/24},
\eeq
where
$$
D=\{(x,y,z)\in \mathbb{Z}^3:0<3z<2x,-x<3y<x,2x^2-3z^2-6y^2\equiv23\pmod{24}\}.
$$

\section{Congruences for a class of mock theta functions}
\label{sec:mock}

In this section we will discuss congruences for four mock theta functions 
associated with the Hurwitz class number functions
$\mathscr{H}_{4,3}$, $\mathscr{H}_{8,7}$, $\mathscr{H}_{12,11}$ 
and $\mathscr{H}_{24,23}$. These mock theta functions are 
$A(q)$ (second order), $\phi_{-}(q)$ and $\sigma(q)$ (sixth order), and
$V_1(q)$ (eighth order). The parity of the coefficients of these functions
and many other mock theta functions was recently found by Wang \cite{Wa2020}.
We determine their behaviour mod $4$ by relating them with the Hurwitz class
number.

\subsection{The second order mock theta function $A(q)$}

The second order mock theta function $A(q)$ is defined in \myeqref{Aqdef}. 
We find a congruence relation between the coefficients of $A(q)$ and the 
Hurwitz class number $H(8n-1)$. 
We have the following lemma.                        
\begin{lemma}
\label{lem:NAH}
For each integer $n>0$, we have
\beq
\label{eq:NAH}
N_A(n)\equiv (-1)^{n+1}H(8n-1)\pmod4.
\eeq
\end{lemma}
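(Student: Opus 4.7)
The statement \eqref{eq:NAH} is equivalent, after using $\mathscr{H}_{8,7}(q) = \sum_{n\ge 0} H(8n+7) q^n$, to the generating-function congruence
\[
A(q) \;\equiv\; q\,\mathscr{H}_{8,7}(-q) \pmod{4},
\]
since $q\mathscr{H}_{8,7}(-q) = \sum_{m\ge 1}(-1)^{m+1}H(8m-1)q^m$. My plan is to match both sides as Hecke-Rogers type indefinite theta series modulo $4$, using Humbert's identity \eqref{H87} on the class-number side and a new Hecke-Rogers representation of $A(q)$ on the mock-theta side.

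\textbf{Step 1 (Hecke-Rogers for $A(q)$).} Starting from the second $q$-hypergeometric form in \eqref{Aqdef},
\[
A(q) \;=\; \sum_{n\ge 0}\frac{q^{n+1}(-q^2;q^2)_n}{(q;q^2)_{n+1}},
\]
I would apply a Bailey-pair transformation (or a suitable specialization of Watson's ${}_8\phi_7$ transformation) to rewrite $A(q)$ as an indefinite theta series of Hecke-Rogers form
\[
A(q) \;=\; \frac{1}{\Psi(q)}\sum_{(m,k)\in R}\sgn(\cdot)\,q^{Q(m,k)},
\]
for an eta-quotient $\Psi(q)$, a region $R\subset\Z^2$, and an indefinite binary quadratic form $Q(m,k)$. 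This is the analog, for the second-order mock theta function $A(q)$, of the class-number/eta-product Hecke-Rogers identifications used in \cite{Ch-Ga}, and is the source of the ``new Hecke-Rogers type identities'' advertised in the abstract.

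\textbf{Step 2 (Hecke-Rogers for $q\mathscr{H}_{8,7}(-q)$).} Substitute $q\mapsto -q$ in Humbert's formula \eqref{H87}, expand $(1+(-q)^n)^{-1}$ as a geometric series, and separate the resulting double sum by the parity of $n$. Since $n^2\equiv 0\pmod 4$ when $n$ is even and $n^2\equiv 1\pmod 4$ when $n$ is odd, only the odd-$n$ contributions survive modulo $4$. After simplifying the prefactor using $(-q;-q)_\infty = J_2^3/(J_1J_4)$ and the congruences \eqref{eq:thetamod4}, this rewrites $q\,\mathscr{H}_{8,7}(-q)$ modulo $4$ as an indefinite theta sum of the same Hecke-Rogers shape as in Step 1, times an eta-quotient.

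\textbf{Step 3 (comparison).} With both sides expressed as (eta-quotient)$\times$(Hecke-Rogers sum), the eta-quotient prefactors agree modulo $4$ by \eqref{eq:thetamod4} (in particular via $J_1^4/J_2^2\equiv 1\pmod 4$ and the other two identities there), and the indefinite theta sums are matched by an explicit change of variables between the two index sets.

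The principal obstacle is Step~1: producing the Hecke-Rogers identity for $A(q)$ whose underlying indefinite quadratic form is compatible, after mod-$4$ reduction, with the one coming from Humbert's identity for $H(8n+7)$. I expect this to require delicate $q$-series manipulation --- the alignment of the two summation regions and of the sign patterns is what must be engineered, and it is precisely this alignment that carries the arithmetic content of the congruence.
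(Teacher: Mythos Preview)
Your Step~2 is essentially what the paper does, and it is correct: reduce Humbert's identity \eqref{H87} modulo $4$, kill the even-$n$ terms via $n^2\equiv 0\pmod 4$, keep the odd-$n$ terms via $n^2\equiv 1\pmod 4$, and simplify the eta-prefactor with $J_1^4/J_2^2\equiv 1\pmod 4$. After writing $n=2m+1$ this gives
\[
\mathscr{H}_{8,7}(q)\;\equiv\;\frac{J_1}{J_2^2}\sum_{m\ge 0}\frac{q^{2m^2+3m}}{1+q^{2m+1}}\pmod 4.
\]

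The gap is Step~1. You set yourself the task of producing a new Hecke--Rogers identity for $A(q)$ and then matching indefinite theta sums, and you flag this as the ``principal obstacle''. In fact no such identity is needed here: the right-hand side above is \emph{already}, on the nose, equal to $A(-q)/(-q)$ by Ramanujan's Appell--Lerch representation
\[
A(q)\;=\;q\,\frac{(-q;q^2)_\infty}{(q^2;q^2)_\infty}\sum_{m\ge 0}\frac{(-1)^m q^{2m^2+3m}}{1-q^{2m+1}}
\]
(see \cite[p.~8]{Ra1988} or \cite[p.~265]{An-Be2005}). Substituting $q\mapsto -q$ and noting $(q;q^2)_\infty=J_1/J_2$ gives exactly $\frac{J_1}{J_2^2}\sum_{m\ge 0}\frac{q^{2m^2+3m}}{1+q^{2m+1}}$. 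So the comparison is an equality of Lambert-type series, not a mod-$4$ match of Hecke--Rogers sums, and the whole argument is two lines once you invoke \eqref{Aqap}. The Hecke--Rogers machinery is used elsewhere in the paper (for $\psi(q)$, $\phi_-(q)$, and $U(q)$), but not for this lemma; by routing through a hypothetical Bailey-pair computation you have turned a known identity into an open problem.
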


\begin{proof}
Ramanujan \cite[p.8]{Ra1988} found that $A(q)$ can be written as an 
Appell-Lerch sum
\beq
\label{Aqap}
A(q)=q\frac{(-q;q^2)_\infty}{(q^2;q^2)_\infty}\sum_{n=0}^{\infty}\frac{(-1)^nq^{2n^2+3n}}{1-q^{2n+1}}.
\eeq
For a proof see \cite[p.265]{An-Be2005}. 
Combining \myeqref{H87} and \myeqref{Aqap} and noting that $(2n)^2\equiv 0\pmod4$ and $(2n+1)^2\equiv 1\pmod4$, we have
\begin{align}
\label{AqH}
\mathscr{H}_{8,7}(q)&=\frac{1}{qJ_1^3}\sum_{n=0}^{\infty}\frac{(-1)^{n+1}n^2q^{n(n+1)/2}}{1+q^n}\equiv\frac{J_1^4}{J_2^2}\cdot\frac{1}{qJ_1^3}\sum_{n=0}^{\infty}\frac{(2n+1)^2q^{(2n+1)(n+1)}}{1+q^{2n+1}}\\
\nonumber
&\equiv\frac{J_1}{J_2^2}\sum_{n=0}^{\infty}\frac{q^{2n^2+3n}}{1+q^{2n+1}}=\frac{A(-q)}{-q}\pmod4,
\end{align}
by \myeqref{eq:thetamod4}.
This implies
$$
(-1)^{n+1}N_A(n)\equiv H(8n-1)\pmod4,
$$
which is \myeqref{eq:NAH}.
\end{proof}

We note that by \myeqref{eq:NAH}, $N_A(n)$ is odd if and only if $H(8n-1)$ is 
odd and $N_A(n)\equiv 2\pmod4$ if and only if $H(8n-1)\equiv 2\pmod4$. 
Also $m^2\equiv 1\pmod8$ for each odd $m$. Thus Theorem \myref{thm:the12} implies

\begin{theorem}
\label{the12A}
Let $n$ be a positive integer.
\begin{enumerate}
\item[(1)] $N_A(n)$ is odd if and only if $8n-1$ has the form
$$
8n-1=p^{4a+1}m^2,
$$
where $p$ is prime, and $m$ and $a$ are integers satisfying
$(m,p)=1$ and $a\ge0$.
\item[(2)]  $N_A(n)\equiv 2\pmod4$ if and only if $8n-1$ has the form
$$
8n-1=p_1^{4a+1}p_2^{4b+1}m^2,
$$
where $p_1$ and $p_2$ are primes such that $\displaystyle\leg{p_1}{p_2}=-1$,
$(m,p_1p_2)=1$ and $a,b\geq 0$ are integers.
\end{enumerate}
\end{theorem}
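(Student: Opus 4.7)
The plan is to deduce Theorem \ref{the12A} from Lemma \ref{lem:NAH}, which supplies the congruence $N_A(n)\equiv(-1)^{n+1}H(8n-1)\pmod{4}$, together with the structural characterization of the residue classes of $3H(m)\pmod{4}$ for $m\equiv 3\pmod{4}$ supplied by Theorem \ref{thm:the12}.

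The reduction proceeds in two cosmetic steps. First, the sign $(-1)^{n+1}$ is irrelevant for questions of parity or for membership in the class $2\pmod{4}$, since negation preserves both. Hence $N_A(n)$ is odd iff $H(8n-1)$ is odd, and $N_A(n)\equiv 2\pmod{4}$ iff $H(8n-1)\equiv 2\pmod{4}$. Second, to trade $3H$ for $H$ one must check that $H(8n-1)$ is genuinely an integer: the only denominators that can appear in $H(N)$ arise from classes containing a multiple of $x^2+y^2$ (forcing $N=4k^2$) or of $x^2+xy+y^2$ (forcing $N=3k^2$). Since $8n-1$ is odd, it is not $4k^2$; and $3k^2\pmod{8}$ always lies in $\{0,3\}$, never $7$, so $8n-1\neq 3k^2$. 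Thus $H(8n-1)\in\mathbb{Z}$, and then $3H\equiv -H\pmod{4}$ shows that ``$3H(8n-1)$ is odd'' (resp.\ ``$\equiv 2\pmod{4}$'') is equivalent to the same statement about $H(8n-1)$.

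With these two reductions in place, the theorem is a direct transcription of Theorem \ref{thm:the12} with $m=8n-1$; the hypothesis $m\equiv 3\pmod{4}$ is automatic (in fact $m\equiv 7\pmod{8}$), so no extra compatibility check is needed on the factorizations $p^{4a+1}m^2$ or $p_1^{4a+1}p_2^{4b+1}m^2$ beyond that already built into Theorem \ref{thm:the12}. I do not expect this last step to present any real difficulty; the hard work lies upstream in Lemma \ref{lem:NAH}, where Ramanujan's Appell--Lerch form \eqref{Aqap} for $A(q)$ is matched against Humbert's formula \eqref{H87} for $\mathscr{H}_{8,7}(q)$ via the elementary theta congruences \eqref{eq:thetamod4}, and in Theorem \ref{thm:the12} itself, which follows from Theorem \ref{thm:Hnleg} and the Hecke-type recursion \eqref{eigen} by the bootstrapping argument of \cite{Ch-Ga}.
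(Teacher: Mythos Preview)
Your proposal is correct and follows essentially the same approach as the paper: reduce via Lemma~\ref{lem:NAH} to the parity and mod-$4$ behaviour of $H(8n-1)$, then invoke Theorem~\ref{thm:the12}. Your explicit verification that $H(8n-1)\in\mathbb{Z}$ (so that the factor $3$ in Theorem~\ref{thm:the12} is immaterial) is a point the paper leaves implicit; the paper instead remarks that $m^2\equiv 1\pmod{8}$ for odd $m$ to confirm the factorizations transfer directly.
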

\begin{remark}
Wang \cite[Theorem 3.1]{Wa2020} proved (1) by a different method.
\end{remark}


\begin{theorem}
\label{theA4}
Kim, Lim and Lovejoy's Conjecture \myref{conjAq} is true.
\end{theorem}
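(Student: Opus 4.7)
The plan is to reduce the statement to a congruence for the Hurwitz class number via Lemma \myref{lem:NAH}: writing $m = p^2 n + (pk+1)\delta_p$, it suffices to prove $H(8m - 1) \equiv 0 \pmod 4$ (the cases $m \le 0$ being trivial since $N_A(m) = 0$ and $H(8m-1)=0$ there). The first step is to exploit $8\delta_p \equiv 1 \pmod{p^2}$ to show $8m - 1 \equiv pk \pmod{p^2}$; this yields $8m - 1 = pN$ with $p \nmid N$, $N \equiv k \pmod p$, and hence $\leg{N}{p} = 1$. Reducing $pN \equiv 7 \pmod 8$ next gives $N \equiv 7,\,5,\,3 \pmod 8$ according as $p \equiv 1,\,3,\,5 \pmod 8$. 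The excluded residue $p \equiv 7 \pmod 8$ is precisely the one for which $N \equiv 1 \pmod 8$ would be possible.

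Next I would apply Theorem \myref{thm:the12}, which characterizes exactly when $H(8m-1)$ fails to be divisible by $4$. In case (1), $8m - 1 = q^{4a+1}(m')^2$ for some prime $q$; since $p \| 8m-1$, one is forced to $q = p$, $a = 0$ and $N = (m')^2$, so $N \equiv 1 \pmod 8$. This contradicts the three residues found above, and here is exactly where the hypothesis $p \not\equiv 7 \pmod 8$ is used. In case (2), $8m - 1 = p_1^{4a+1} p_2^{4b+1}(m')^2$ with $\leg{p_1}{p_2} = -1$; taking $p_1 = p$, $a=0$ gives $N = p_2^{4b+1}(m')^2$, so $\leg{p_2}{p} = \leg{N}{p} = 1$. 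Combining with $\leg{p}{p_2} = -1$ via quadratic reciprocity forces $p \equiv p_2 \equiv 3 \pmod 4$; the hypothesis $p \not\equiv 7 \pmod 8$ then yields $p \equiv 3 \pmod 8$ and hence $N \equiv 5 \pmod 8$. But $p_2$ and $m'$ are odd, so $N \equiv p_2 \pmod 8$, giving $p_2 \equiv 5 \pmod 8$ and contradicting $p_2 \equiv 3 \pmod 4$.

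Since neither case of Theorem \myref{thm:the12} can hold, $3H(8m - 1) \equiv 0 \pmod 4$, and therefore $H(8m-1) \equiv 0 \pmod 4$ as required. The proof is essentially careful residue bookkeeping; the main subtlety is recognizing that the two hypotheses of the conjecture, $p \not\equiv 7 \pmod 8$ and $\leg{k}{p} = 1$, are exactly what is needed to eliminate both obstructions in Theorem \myref{thm:the12}---the first via a congruence modulo $8$ and the second via quadratic reciprocity together with a congruence modulo $8$.
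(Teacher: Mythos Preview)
Your proof is correct and follows essentially the same route as the paper: both reduce to showing $p\|8m-1$, then invoke the characterization of $H(8m-1)\pmod 4$ (the paper uses Theorem~\myref{the12A}, you use the equivalent Lemma~\myref{lem:NAH} plus Theorem~\myref{thm:the12}) and derive a contradiction in each case. One small point: when you write ``taking $p_1=p$'' in case~(2), you should note that if instead $p=p_2$ then Theorem~\myref{thm:the12} gives $\leg{q}{p}=-1$ directly, which already contradicts $\leg{q}{p}=\leg{N}{p}=1$; so the assumption $p=p_1$ is without loss of generality.

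The only substantive difference is in how the contradiction is reached in case~(2). The paper observes that $pq\equiv -1\pmod 8$ forces one of $p,q$ to be $\equiv 1\pmod 4$, so quadratic reciprocity gives $\leg{q}{p}=\leg{p}{q}=-1$, immediately contradicting $\leg{q}{p}=\leg{k}{p}=1$. Your argument instead deduces $p\equiv q\equiv 3\pmod 4$ from the mismatch of Legendre symbols, then uses the $\bmod\ 8$ residue of $N$ to force $q\equiv 5\pmod 8$, a contradiction. Both are valid; the paper's version is a touch more direct since it avoids the second pass through $\bmod\ 8$ arithmetic.
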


\begin{proof}
Let $p\not\equiv7\pmod8$ be an odd prime
and suppose $n$, $k$ are nonnegative integers where $\displaystyle\leg{k}{p}=1$. Suppose
$m=p^2n+(pk+1)\delta_p$, where $\delta_p=\tfrac{1}{8}(7p^2+1)$.
Since
\beq
\label{8m-1}
8m-1\equiv pk\pmod{p^2},
\eeq
we have $p\|8m-1$.  Theorem \myref{the12A}(1)  implies that $N_A(m)$ 
is  even since $p\not\equiv7\pmod{8}$. Now suppose $N_{A}(m)\equiv 2\pmod4$. 
Then Theorem \myref{the12A}(2) and \myeqref{8m-1} imply that 
$$
8m-1=p^1 q^{4b+1}t^2,\quad\mbox{and}\quad
k\equiv q^{4b+1}t^2\pmod{p},
$$
where $q$ is a prime satisfying 
$\displaystyle\leg{p}{q}=-1$, 
$(pq,t)=1$, and $b\ge0$, $t>0$ are integers. 
Since $pq\equiv-1\pmod{8}$
either $p$ or $q\equiv1\pmod{4}$
so by quadratic reciprocity $\leg{q}{p}=\leg{p}{q}=-1$.
But
$$
\leg{q}{p}=\leg{q^{4b+1}t^2}{p}=\leg{k}{p}=1,
$$
which is a contradiction.
Hence
$N_A(m)\not\equiv 2\pmod4$, and   
\begin{equation*}
N_A(m)\equiv 0\pmod4. \qedhere
\end{equation*}
\end{proof}

\subsection{The eighth order mock theta function $V_1(q)$}

McIntosh \cite{McI07} studied the 
eighth order mock theta function 
$$
V_1(q)=
\sum_{n=0}^\infty N_{V_1}(n) q^n = 
\sum_{n=0}^{\infty}\frac{q^{(n+1)^2}(-q;q^2)_n}{(q;q^2)_{n+1}}.
$$
By \cite[p.286 Eq(4),Eq(7)]{McI07} we have
\beq
\label{V1q}
V_1(q)=A(q^2)+qP(q^2),
\eeq
where
\beq
\label{Pq1}
P(q)=(-q;q)_\infty(-q^2;q^2)_\infty^2(q^4;q^4)_\infty=\frac{J_4^3}{J_1J_2}.
\eeq
We have the following lemma for $N_{V_1}(n)$.
\begin{lemma}
For each integer $n>0$, we have
\beq
\label{NV1H}
N_{V_1}(n)\equiv \chi(n)H(4n-1)\pmod4,
\eeq
where $\chi(n)=-1$ if $n\equiv 0,1\pmod4$ and $\chi(n)=1$ if $n\equiv 2,3\pmod4$.
\end{lemma}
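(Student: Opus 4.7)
The plan is to use equation \myeqref{V1q}, $V_1(q) = A(q^2) + q P(q^2)$, to split the problem along the parity of $n$. The even-indexed coefficients are controlled by Lemma~\myref{lem:NAH}, and the odd-indexed coefficients are the Taylor coefficients of the eta-product $P(q) = J_4^3/(J_1 J_2)$ from \myeqref{Pq1}, which I can match against $\mathscr{H}_{8,3}$ via the identity $3\mathscr{H}_{8,3}(q) = J_2^6/J_1^3$ from \myeqref{H83}.

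For $n = 2m$, the $qP(q^2)$ contribution vanishes, so $N_{V_1}(2m) = N_A(m)$. By Lemma~\myref{lem:NAH},
\[ N_{V_1}(2m) \equiv (-1)^{m+1} H(8m-1) \pmod 4, \]
and a direct parity check (case $m$ even versus $m$ odd) shows $\chi(2m) = (-1)^{m+1}$, so the right-hand side is $\chi(2m) H(4(2m)-1)$, as required.

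For $n = 2m+1$, extracting coefficients gives $N_{V_1}(2m+1) = [q^m]P(q)$, and since $\chi(2m+1) = (-1)^{m+1}$ the target congruence becomes, in generating-function form,
\[ 3 P(q) \equiv -3 \mathscr{H}_{8,3}(-q) \pmod 4 \]
(I work with $3P$ and $3\mathscr{H}_{8,3}$ to stay with integer power series, since $H(3) = 1/3$). Using \myeqref{H83} and the elementary identities $J_1(-q) = J_2^3/(J_1 J_4)$ and $J_2(-q) = J_2$, I compute
\[ 3 \mathscr{H}_{8,3}(-q) = \frac{J_2^6}{(J_2^3/(J_1 J_4))^3} = \frac{J_1^3 J_4^3}{J_2^3}. \]
Since $3 \equiv -1 \pmod 4$, the required congruence reduces to
\[ \frac{J_4^3}{J_1 J_2} \equiv \frac{J_1^3 J_4^3}{J_2^3} \pmod 4, \]
and, after multiplying both sides by $J_1 J_2^3 / J_4^3$ (a unit in $\mathbb{Z}[[q]]$), to $J_1^4 \equiv J_2^2 \pmod 4$, which is immediate from the third part of \myeqref{eq:thetamod4}.

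There is no real conceptual obstacle; the whole argument is mechanical once the odd part is rewritten as a $q$-product. The only delicate points are bookkeeping: correctly carrying out the $q \mapsto -q$ substitution in eta-product form, and using the factor of $3$ to absorb the non-integer value $H(3) = 1/3$ (the sole place where $\mathscr{H}_{8,3}$ fails to have integer coefficients). Both are handled by the standard relation $3 \equiv -1 \pmod 4$ together with $J_1(-q) = J_2^3/(J_1 J_4)$.
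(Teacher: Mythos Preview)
Your proof is correct and follows essentially the same approach as the paper: split by parity via $V_1(q)=A(q^2)+qP(q^2)$, handle the even case by Lemma~\myref{lem:NAH}, and reduce the odd case to the congruence $J_1^4/J_2^2\equiv 1\pmod4$ from \myeqref{eq:thetamod4}. The only cosmetic difference is that the paper substitutes $q\mapsto -q$ into $P$ (showing $P(-q)\equiv 3\mathscr{H}_{8,3}(q)\pmod4$) whereas you substitute into $\mathscr{H}_{8,3}$; both computations unwind to the same eta-product identity.
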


\begin{proof}
Equations \myeqref{eq:NAH} and \myeqref{V1q} give
$$
N_{V_1}(2n)=N_A(n)\equiv (-1)^{n+1}H(8n-1)\pmod4,
$$
which implies that \myeqref{NV1H} holds for even $n$. By \myeqref{eq:thetamod4}, 
\myeqref{H83} and \myeqref{Pq1} we have
$$
P(-q)=(q;-q)_\infty(-q^2;q^2)_\infty^2(q^4;q^4)_\infty
=\frac{J_4^4J_1^4}{J_2^{10}}\cdot\frac{J_2^6}{J_1^3}\equiv \frac{J_2^6}{J_1^3}
=3\mathscr{H}_{8,3}(q)\pmod4.
$$
Hence
$$
N_{V_1}(2n+1)\equiv (-1)^{n+1}H(8n+3)\pmod4,
$$
which implies that \myeqref{NV1H} holds for odd $n$.
\end{proof}

By \myeqref{NV1H}, $N_{V_1}(n)$ is odd if and only if $H(4n-1)$ is odd and $N_{V_1}(n)\equiv 2\pmod4$ if and only if $H(4n-1)\equiv 2\pmod4$. 
From Theorem \myref{thm:the12} we obtain

\begin{theorem}
\label{the12V1}
For any positive integer $n$,
\begin{enumerate}
\item[(1)] $N_{V_1}(n)$ is odd if and only if $4n-1$ has the form
$$
4n-1=p^{4a+1}m^2,
$$
where $p$ is prime, and $m$ and $a$ are integers satisfying
$(m,p)=1$ and $a\ge0$.
\item[(2)]  $N_{V_1}(n)\equiv 2\pmod4$ if and only if $4n-1$ has the form
$$
4n-1=p_1^{4a+1}p_2^{4b+1}m^2,
$$
where $p_1$ and $p_2$ are primes such that $\displaystyle\leg{p_1}{p_2}=-1$,
$(m,p_1p_2)=1$ and $a,b\geq 0$ are integers.
\end{enumerate}
\begin{remark}
The result (1) was also found by Wang \cite[Theorem 8.4]{Wa2020}.
\end{remark}
\end{theorem}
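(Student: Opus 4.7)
The plan is to deduce Theorem~\myref{the12V1} directly from the congruence \myeqref{NV1H} together with Theorem~\myref{thm:the12}, in exact analogy with the derivation of Theorem~\myref{the12A} from Lemma~\myref{lem:NAH}. All the substantive work is already packaged: \myeqref{NV1H} relates $N_{V_1}(n)$ to the Hurwitz class number $H(4n-1)$ modulo $4$, and Theorem~\myref{thm:the12} gives the precise factorization criterion on $N=4n-1$ for when $3H(N)$ is odd and when $3H(N)\equiv 2\pmod 4$.

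First I would note that $4n-1\equiv 3\pmod 4$ for every positive integer $n$, so Theorem~\myref{thm:the12} is applicable with its argument taken to be $4n-1$. Next I would observe that both the sign $\chi(n)\in\{-1,+1\}$ appearing in \myeqref{NV1H} and the factor $3$ are units modulo $4$, and their multiplicative action on $\mathbb{Z}/4\mathbb{Z}$ permutes the set of odd residues $\{1,3\}$ and fixes the residue $2$. Consequently, from \myeqref{NV1H} we obtain
\begin{align*}
N_{V_1}(n)\text{ odd} \;&\Longleftrightarrow\; H(4n-1)\text{ odd} \;\Longleftrightarrow\; 3H(4n-1)\text{ odd},\\
N_{V_1}(n)\equiv 2\pmod 4 \;&\Longleftrightarrow\; H(4n-1)\equiv 2\pmod 4 \;\Longleftrightarrow\; 3H(4n-1)\equiv 2\pmod 4.
\end{align*}

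Finally, I would invoke parts (1) and (2) of Theorem~\myref{thm:the12} applied to $3H(4n-1)$ to translate the right-hand sides above into the factorization conditions on $4n-1$ stated in parts (1) and (2) of the theorem. Since each step is a direct translation, there is no genuine obstacle to overcome; the argument is exactly parallel to the proof of Theorem~\myref{the12A}, and the only non-trivial inputs are \myeqref{NV1H} (already established in the preceding lemma) and Theorem~\myref{thm:the12} (already established in Section~\myref{subsec:congs}).
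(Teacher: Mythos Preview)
Your proposal is correct and matches the paper's own derivation essentially verbatim: the paper simply remarks that by \myeqref{NV1H} the parity and mod~$4$ behaviour of $N_{V_1}(n)$ coincide with those of $H(4n-1)$, and then invokes Theorem~\myref{thm:the12}. Your added observation that $\chi(n)$ and $3$ are units in $\mathbb{Z}/4\mathbb{Z}$ permuting $\{1,3\}$ and fixing $2$ makes the passage from $H(4n-1)$ to $3H(4n-1)$ explicit, which is a welcome clarification but not a different idea.
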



\begin{theorem}
\label{thmV1mod4}
Let $p\not\equiv3\pmod4$ be an odd prime, suppose $4\delta_p\equiv1\pmod{p^2}$ and $k,n\in \mathbb{Z}$ where $\displaystyle\leg{k}{p}=1$. Then
$$
N_{V_1}(p^2n+(pk+1)\delta_p)\equiv 0\pmod4.
$$
\end{theorem}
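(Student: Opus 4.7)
The plan is to mimic the proof of Theorem~\myref{theA4} essentially verbatim, with the characterization in Theorem~\myref{the12V1} playing the role of Theorem~\myref{the12A}. Set $m = p^2 n + (pk+1)\delta_p$. Using $4\delta_p \equiv 1 \pmod{p^2}$ I would first compute
$$
4m - 1 \equiv 4(pk+1)\delta_p - 1 \equiv (pk+1) - 1 \equiv pk \pmod{p^2}.
$$
Since $\leg{k}{p} = 1$ forces $p \nmid k$, this yields $p \| (4m-1)$, the analog of \myeqref{8m-1} in the $A$-case.

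Next I would rule out Theorem~\myref{the12V1}(1). If $4m - 1 = p^{4a+1} m_1^2$ with $(m_1, p) = 1$, then $p \| (4m-1)$ forces $a = 0$, so $4m - 1 = p\, m_1^2$. Reducing mod $4$ gives $p\, m_1^2 \equiv 3 \pmod 4$. The hypothesis $p \not\equiv 3 \pmod 4$ together with $p$ odd yields $p \equiv 1 \pmod 4$, whence $m_1^2 \equiv 3 \pmod 4$, which is impossible. Thus $N_{V_1}(m)$ is even.

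Then I would show that Theorem~\myref{the12V1}(2) also fails, so $N_{V_1}(m) \not\equiv 2 \pmod 4$. If $4m-1 = p_1^{4a+1} p_2^{4b+1} m_1^2$ with $\leg{p_1}{p_2} = -1$ and $(m_1, p_1 p_2) = 1$, then $p \| (4m-1)$ forces $p$ to coincide with $p_1$ or $p_2$; say $p = p_1$, which forces $a = 0$, so $4m-1 = p\, p_2^{4b+1} m_1^2$. Dividing $4m-1 \equiv pk \pmod{p^2}$ by $p$ gives $p_2^{4b+1} m_1^2 \equiv k \pmod p$, hence $\leg{p_2}{p} = \leg{k}{p} = 1$. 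On the other hand, reducing $p \cdot p_2^{4b+1} m_1^2 \equiv 3 \pmod 4$ and using $p \equiv 1 \pmod 4$ together with $m_1$ odd forces $p_2 \equiv 3 \pmod 4$. Quadratic reciprocity for $p \equiv 1 \pmod 4$ then gives $\leg{p}{p_2} = \leg{p_2}{p} = 1$, contradicting $\leg{p_1}{p_2} = \leg{p}{p_2} = -1$. Hence $N_{V_1}(m) \equiv 0 \pmod 4$.

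The main obstacle has already been surmounted upstream: Theorem~\myref{the12V1} encodes both the Hurwitz class number characterization of Theorem~\myref{thm:the12} and the reduction \myeqref{NV1H} of $N_{V_1} \pmod 4$ to $H(4n-1)$. Given these, the argument above is a purely arithmetic one, structurally identical to the proof of Theorem~\myref{theA4}. The only subtle point is the use of the hypothesis $p \not\equiv 3 \pmod 4$ in two places: to kill case (1) directly via a mod-$4$ square obstruction, and to provide the favourable (trivial) leg of quadratic reciprocity needed to close case (2).
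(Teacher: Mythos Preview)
Your proof is correct and follows essentially the same approach as the paper's own proof: compute $4m-1\equiv pk\pmod{p^2}$ to get $p\|(4m-1)$, then use Theorem~\myref{the12V1}(1) together with $p\not\equiv3\pmod4$ to force evenness, and Theorem~\myref{the12V1}(2) plus quadratic reciprocity to rule out $N_{V_1}(m)\equiv2\pmod4$. Your treatment is in fact slightly more explicit in places (you spell out why $a=0$ and why $p_2\equiv3\pmod4$), but the structure and the key ideas are identical.
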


\begin{proof}
Let $p\not\equiv3\pmod4$ be an odd prime
and suppose $n$, $k$ are nonnegative integers where $\displaystyle\leg{k}{p}=1$. 
Suppose
$m=p^2n+(pk+1)\delta_p$, where $\delta_p=\tfrac{1}{4}(3p^2+1)$.
Since
\beq
\label{4m-1}
4m-1\equiv pk\pmod{p^2},
\eeq
we have $p\|4m-1$. Theorem \myref{the12V1}(1) implies that $N_{V_1}(m)$ is even
since $p\not\equiv3\pmod{4}$.
Now suppose $N_{V_1}(m)\equiv 2\pmod4$. Then Theorem \myref{the12V1}(2) and 
\myeqref{4m-1} imply that 
$$
4m-1=p^1 q^{4b+1}t^2,\quad\mbox{and}\quad
k\equiv q^{4b+1}t^2\pmod{p},
$$
where $q$ is a prime satisfying 
$\displaystyle\leg{p}{q}=-1$, 
$(pq,t)=1$, and $b\ge0$, $t>0$ are integers. 
Since $pq\equiv-1\pmod{4}$
either $p$ or $q\equiv1\pmod{4}$
so by quadratic reciprocity $\leg{q}{p}=\leg{p}{q}=-1$.
But
$$
\leg{q}{p}=\leg{q^{4b+1}t^2}{p}=\leg{k}{p}=1,
$$
which is a contradiction.
Hence $N_{V_1}(m)\not\equiv 2\pmod4$, and       
$$
N_{V_1}(m)\equiv 0\pmod4.
$$
\end{proof}

\subsection{The sixth order mock theta function $\phi_-(q)$}
\label{subsec:phiminus}

Berndt and Chan \cite{Be-Ch07} defined two new sixth order mock theta functions
including
$$
\phi_-(q)=\sum_{n=1}^\infty N_{\phi_{-}}(n) q^n :=
\sum_{n=1}^{\infty}\frac{q^n(-q;q)_{2n-1}}{(q,q^2)_n}.
$$
We will show that $\phi_-(q)$ is associated with $\mathscr{H}_{24,23}$. 
This is more difficult than showing $A(q)$ is associated with 
$\mathscr{H}_{8,7}$. 
Unfortunately Humbert's formula \myeqref{H2423} did not reveal  
the relations needed. Instead 
we find a relation mod $4$  between 
$\phi_-(q)$ and part of the 3-dissection of $A(-q)$.
Following Hickerson and Mortenson \cite{Hi-Mo14} we define
$$
j(z;q):=(z;q)_\infty (z^{-1}q;q)_\infty (q;q)_\infty,
$$
and 
$$
m(x,q,z):=\frac{1}{j(z;q)}\sum_{r=-\infty}^{\infty}
          \frac{(-1)^rq^{r(r-1)/2}z^r}{1-q^{r-1}xz}.
$$
Hickerson and Mortenson used $m(x,q,z)$ as a building block for expressing
the mock theta functions in terms of Appell-Lerch sums.  
They found
\beq
\label{Aqm}
A(q)=-m(q,q^4,q^2), \qquad\mbox{(\cite[Eq.(5.1),p.399]{Hi-Mo14})}
\eeq
and
\beq
\label{phi-qm}
\phi_-(q)=-m(q,q^3,q)-q\frac{J_6^6}{J_2^2J_3^3},
\qquad\mbox{(\cite[Eq.(5.30),p.401]{Hi-Mo14}.)}
\eeq
We define the usual Atkin $U_p$ operator which acts on a formal power series
$$
f(q)=\sum_{n\in \mathbb{Z}}a(n)q^n,
$$
by                                 
$$
\Up{p}{f(q)}=\sum_{n\in \mathbb{Z}}a(pn)q^n.
$$

\begin{lemma}
\label{lemphi}
We have
\beq
\label{lmphi}
\Up{3}{A(-q)}=\phi_{-}(q)-4q\frac{J_6^2J_4^8J_1^2}{J_3J_2^{10}},
\eeq
so that 
$$
\Up{3}{A(-q)}\equiv \phi_{-}(q)\pmod4.
$$
\end{lemma}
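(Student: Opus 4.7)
The plan is to start from the Appell--Lerch form $A(q)=-m(q,q^4,q^2)$ in \myeqref{Aqm}, substitute $q\mapsto -q$ to get
$$A(-q)=-m(-q,q^4,q^2)=-\frac{1}{j(q^2;q^4)}\sum_{r\in\Z}\frac{(-1)^r q^{2r^2}}{1+q^{4r-1}},$$
and then apply $U_3$ directly to the right-hand side. Expanding $1/(1+q^{4r-1})$ as a geometric series converts the Appell--Lerch sum into a double sum supported on an indefinite lattice (a Hecke--Rogers type series of the sort advertised in the introduction), and $U_3$ selects those pairs $(r,k)$ with $2r^2+k(4r-1)\equiv 0\pmod 3$. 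Splitting the range of $r$ into residue classes modulo $3$ and re-indexing each class separately is a purely mechanical step.

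The next step is to re-assemble the restricted double sum as an Appell--Lerch expression of shape $m(q,q^3,z)$. By the change-of-$z$ formula for $m(x,q,z)$ due to Hickerson and Mortenson, any two choices of $z$ differ by an explicit theta quotient, so one is free to standardize $z=q$ to match \myeqref{phi-qm}. Carrying out this standardization produces $-m(q,q^3,q)$ plus a theta correction. By \myeqref{phi-qm}, the piece of the correction matching $-qJ_6^6/(J_2^2J_3^3)$ completes $-m(q,q^3,q)$ into $\phi_{-}(q)$, and the residual theta-quotient is what must be shown to equal $-4q\,J_6^2J_4^8J_1^2/(J_3J_2^{10})$.

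The main obstacle is the identification of this residual piece as the stated eta-quotient. It arises naturally as a rational expression in the functions $j(\cdot;\cdot)$, and I would collapse it using the Jacobi triple product \myeqref{jacp} together with the quintuple product \myeqref{eq:quintprod}, keeping close track of the denominators contributed by $j(q^2;q^4)$ and $j(q;q^3)$. The explicit factor of $4$ should emerge from the combinatorics of combining the two nontrivial residue classes mod $3$ in the lattice split, paired with a cancellation against the $j(q^2;q^4)^{-1}$ prefactor. Once the identity is pinned down, the congruence $U_3[A(-q)]\equiv\phi_{-}(q)\pmod 4$ is immediate from the factor of $4$. As an independent certification, both sides of the proposed identity sit in a finite-dimensional space of weight-$3/2$ (mock) modular forms on an explicit congruence subgroup, so matching $q$-coefficients up to the appropriate Sturm bound yields a rigorous verification.
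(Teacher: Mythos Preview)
Your outline is in the same spirit as the paper's argument --- reduce both sides to Appell--Lerch building blocks plus explicit theta pieces, then certify an eta identity --- but there is a genuine gap at the very first step. You write
\[
A(-q)=-\frac{1}{j(q^2;q^4)}\sum_{r\in\Z}\frac{(-1)^rq^{2r^2}}{1+q^{4r-1}}
\]
and propose to compute $U_3$ by expanding the geometric series and selecting pairs $(r,k)$ with $2r^2+k(4r-1)\equiv 0\pmod 3$. This ignores the prefactor $1/j(q^2;q^4)$, which is a power series in $q^2$, not in $q^3$; since $U_3$ does not commute with multiplication by such a factor, the lattice selection you describe does not compute $U_3[A(-q)]$. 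You would have to $3$-dissect the prefactor as well and convolve, at which point the ``re-assembly into a single $m(q,q^3,z)$'' is no longer plausible without substantial further input.

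The paper avoids this by never applying $U_3$ to a raw lattice sum. It first uses Hickerson--Mortenson's $n=3$ dissection identity \cite[Corollary~3.8]{Hi-Mo14} (with $q\mapsto q^4$, $x=-q$) together with the change-of-$z$ formula \cite[Theorem~3.3]{Hi-Mo14} to write $m(-q,q^4,q^2)$ as a sum of three $m$-functions in the base $q^{36}$ plus an explicit theta quotient; applying $U_3$ then sends two of the $m$-summands to $m(-q^5,q^{12},-1)$ and $q^{-1}m(-q,q^{12},-1)$ and annihilates the third, while $U_3$ of the theta piece is evaluated as an eta-product identity on $\Gamma_0(72)$. The pair of surviving $m$-functions is matched to $m(q,q^3,q)$ via \cite[Corollary~3.7]{Hi-Mo14} and two further applications of \cite[Theorem~3.3]{Hi-Mo14}, and the accumulated theta corrections form a five-term combination (equation~\myeqref{lmphi2}) whose collapse to $-4q\,J_6^2J_4^8J_1^2/(J_3J_2^{10})$ is checked as an eta identity on $\Gamma_0(24)$; the factor~$4$ does not emerge from any simple combinatorial pairing. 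Your fallback to a Sturm-type verification is also not immediate: $A$ and $\phi_-$ are genuinely mock, so before any coefficient comparison you must first show that the shadows agree under $U_3$, which is essentially the content of the $m$-function matching above.
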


\begin{proof}
Replacing $q$ by $q^4$ and setting $x=-q$ in \cite[Corollary 3.8]{Hi-Mo14}, we have
\begin{align}
\label{mq1}
m(-q,q^4,-1)=&m(-q^{15},q^{36},-1)+\frac{1}{q^3}m(-q^3,q^{36},-1)\\
&+\frac{1}{q^{10}}m(-q^{-9},q^{36},-1)-\frac{1}{2q^3}\frac{J_{36}J_{24}J_{12}J_{6}J_{2}}{J_{72}^2J_{8}J_{3}},
\nonumber
\end{align}
after simplification. Replacing $q$ by $q^4$ and setting $x=-q$, $z_0=q^2$ and $z_1=-1$ in \cite[Theorem 3.3]{Hi-Mo14}, we have
\beq
\label{mq2}
m(-q,q^4,-1)=m(-q,q^4,q^2)+\frac{J_4^{10}J_1}{2J_8^4J_2^6},
\eeq
after simplification. Hence by \myeqref{Aqm}, \myeqref{mq1} and \myeqref{mq2}
\beq
\label{U3A0}
\Up{3}{-A(-q)}=m(-q^5,q^{12},-1)+\frac{1}{q}m(-q,q^{12},-1)-\Up{3}{G(q)},
\eeq
where
$$
G(q)=\frac{1}{2}
\left(\frac{J_{36}J_{24}J_{12}J_{6}J_{2}}{q^3J_{72}^2J_{8}J_{3}}
     +\frac{J_4^{10}J_1}{J_8^4J_2^6}\right).
$$
The following identities may be proved using the theory of modular functions
by verifying that both sides hold for a sufficient number of terms:
$$                   
\Up{3}{\frac{J_{24}^3J_{2}}{J_{36}J_{18}J_{12}J_8}}=1,\qquad
\Up{3}{\frac{J_{24}^4J_4^{10}J_9J_6^{12}J_1}{J_{18}^2J_{12}^{10}J_8^4J_3^6J_2^6}}=1.
$$                   
This verification was carried out using the second author's 
maple \texttt{ETA} package,
which is available at
\begin{center}
\url{https://qseries.org/fgarvan/qmaple/ETA/}
\end{center}
The two functions involved can be rewritten in terms of eta-products,
which turn out to be modular functions on $\Gamma_0(72)$. We then deduce that
\beq         
\Up{3}{\frac{J_{36}J_{24}J_{12}J_{6}J_{2}}{q^3J_{72}^2J_{8}J_{3}}}=\frac{J_{12}^2J_6J_4^2J_2}{qJ_{24}^2J_8^2J_1},\qquad
\Up{3}{\frac{J_4^{10}J_1}{J_8^4J_2^6}}=\frac{J_6^2J_4^{10}J_1^6}{J_8^4J_3J_2^{12}}.
\label{mpG}
\eeq
Substituting \myeqref{mpG} into \myeqref{U3A0}, we have
\beq
\label{U3A}
\Up{3}{-A(-q)}=m(-q^5,q^{12},-1)+\frac{1}{q}m(-q,q^{12},-1)-\frac{J_{12}^2J_6J_4^2J_2}{2qJ_{24}^2J_8^2J_1}-\frac{J_6^2J_4^{10}J_1^6}{2J_8^4J_3J_2^{12}},
\eeq
Replacing $q$ by $q^3$ and setting $x=z=q^2$ in \cite[Corollary 3.7]{Hi-Mo14}, we have
\beq
\label{mqp1}
m(q^2,q^3,q^2)=m(-q^7,q^{12},q^8)-\frac{1}{q}m(-q,q^{12},q^8)+2q\frac{J_{12}^3J_3^3J_2}{J_6^3J_4J_1^2},
\eeq
after simplification. Since the equations in \cite[Proposition 3.1]{Hi-Mo14} imply that
$$
m(x,q,z)+m(q/x,q,q/z)=1,
$$
we can rewrite \myeqref{mqp1} as
\beq
\label{mqp2}
m(q,q^3,q)=m(-q^5,q^{12},q^4)+\frac{1}{q}m(-q,q^{12},q^8)-2q\frac{J_{12}^3J_3^3J_2}{J_6^3J_4J_1^2}.
\eeq
Again using \cite[Theorem 3.3]{Hi-Mo14} we have
\begin{align}
\label{mqp3}
m(-q^5,q^{12},-1)&=m(-q^5,q^{12},q^4)+P_1(q),\\
\label{mqp4}
m(-q,q^{12},-1)&=m(-q,q^{12},q^8)+P_2(q),
\end{align}
where
\begin{align*}
P_1(q)=&\frac{J_{12}^3j(-q^{4};q^{12})j(q^9;q^{12})}
             {j(q^4;q^{12})j(-1;q^{12})j(-q^9;q^{12})j(q^5;q^{12})},\\
P_2(q)=&\frac{J_{12}^3j(-q^{4};q^{12})j(q^9;q^{12})}
             {j(q^4;q^{12})j(-1;q^{12})j(-q^9;q^{12})j(q;q^{12})}.
\end{align*}
We have
\beq
P_1(q) + \frac{1}{q} P_2(q) =
\frac{ J_{12}^3 j(-q^4,q^{12}) j(q^9,q^{12})}
{q j(q^4,q^{12}) j(-1,q^{12}) j(-q^9,q^{12}) j(q^5,q^{12}) j(q,q^{12})}
\Big(j(q^5,q^{12}) + qj(q,q^{12}\Big).
\mylabel{eq:P12ida}
\eeq
In the quintuple product identity \myeqn{quintprod} we let $z=q$
and replace $q$ by $q^4$ to find that
\beq
j(q^5,q^{12}) + qj(q,q^{12}) = (-q;q^2)_\infty (q^2;q^2)_\infty.
\mylabel{eq:quinpapp}
\eeq
Equations \myeqn{P12ida} and \myeqn{quinpapp} give the following identity
\beq
\label{mqp5}
P_1(q)+\frac{1}{q}P_2(q)=\frac{J_{12}^5J_8J_3^3J_2^4}{2qJ_{24}^3J_6^4J_4^3J_1^2}
\eeq
after some simplification using Jacobi's triple product identity \myeqref{jacp}.
Combining \myeqref{phi-qm}, \myeqref{U3A} and \myeqref{mqp2}-\myeqref{mqp5}, we have
\beq
\label{lmphi1}
\Up{3}{A(-q)}=\phi_{-}(q)+\frac{J_{12}^2J_6J_4^2J_2}{2qJ_{24}^2J_8^2J_1}+\frac{J_6^2J_4^{10}J_1^6}{2J_8^4J_3J_2^{12}}+\frac{qJ_6^6}{J_3^3J_2^2}-\frac{J_{12}^5J_8J_3^3J_2^4}{2qJ_{24}^3J_6^4J_4^3J_1^2}-\frac{2qJ_{12}^3J_3^3J_2}{J_6^3J_4J_1^2}.
\eeq
Finally we have 
\beq
\label{lmphi2}
\frac{J_{12}^2J_6J_4^2J_2}{2qJ_{24}^2J_8^2J_1}+\frac{J_6^2J_4^{10}J_1^6}{2J_8^4J_3J_2^{12}}+q\frac{J_6^6}{J_3^3J_2^2}-\frac{J_{12}^5J_8J_3^3J_2^4}{2qJ_{24}^3J_6^4J_4^3J_1^2}-2q\frac{J_{12}^3J_3^3J_2}{J_6^3J_4J_1^2}=-4q\frac{J_6^2J_4^8J_1^2}{J_3J_2^{10}}.
\eeq
This can be proved using the maple \texttt{ETA} package. The identity can be rewritten
as an identity for eta-products that  are modular functions on $\Gamma_0(24)$.
Equation \myeqref{lmphi} follows from \myeqref{lmphi1} and \myeqref{lmphi2}.
The mod $4$ congruence follows immediately.
\end{proof}

Lemmas \myref{lem:NAH} and \myref{lemphi}
imply the following
\begin{lemma}
\label{lem:NphiH}
Let $n$ be a positive integer.
\beq
\label{eq:NphiH}
N_{\phi_-}(n)\equiv -H(24n-1)\pmod4,
\eeq
\end{lemma}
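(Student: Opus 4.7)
The plan is to combine Lemma \myref{lem:NAH} (which gives $N_A(n)\equiv(-1)^{n+1}H(8n-1)\pmod 4$) with Lemma \myref{lemphi} (which gives $\Up{3}{A(-q)}\equiv\phi_-(q)\pmod 4$) by comparing coefficients of $q^n$ on both sides of the $U_3$ identity.

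First, I would expand $A(-q)$ formally as
$$A(-q)=\sum_{n=1}^\infty N_A(n)(-1)^n q^n,$$
so that by the definition of the Atkin operator,
$$\Up{3}{A(-q)}=\sum_{n=1}^\infty N_A(3n)(-1)^{3n}q^n=\sum_{n=1}^\infty (-1)^n N_A(3n)\,q^n.$$
Next, reading off the coefficient of $q^n$ in Lemma \myref{lemphi} yields the congruence
$$N_{\phi_-}(n)\equiv(-1)^n N_A(3n)\pmod 4.$$
Finally, I would substitute the mod $4$ expression for $N_A(3n)$ from Lemma \myref{lem:NAH}, which gives
$$N_A(3n)\equiv(-1)^{3n+1}H(8\cdot 3n-1)=(-1)^{n+1}H(24n-1)\pmod 4.$$
Combining these two congruences produces
$$N_{\phi_-}(n)\equiv(-1)^n(-1)^{n+1}H(24n-1)=-H(24n-1)\pmod 4,$$
which is exactly \myeqref{eq:NphiH}.

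There is essentially no obstacle here since all the heavy lifting has already been done in Lemmas \myref{lem:NAH} and \myref{lemphi}; the proof is a matter of tracking signs through the $U_3$-dissection and invoking $(-1)^{3n+1}=(-1)^{n+1}$. The only point to check carefully is that the indexing begins at $n=1$ for $\phi_-(q)$ (so that no constant term enters the comparison) and that Lemma \myref{lem:NAH} is indeed valid for all positive integers $n$, including multiples of $3$; both are clear from the statements.
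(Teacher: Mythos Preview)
Your proof is correct and is exactly the argument the paper has in mind: the paper simply states that Lemma~\ref{lem:NphiH} follows from Lemmas~\ref{lem:NAH} and~\ref{lemphi} without giving details, and your computation is the natural way to fill those in.
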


We note that if $(m,6)=1$ then $m^2\equiv1\pmod{24}$.
Thus the following theorem follows easily from Lemma \myref{lem:NphiH} and
Theorem \myref{thm:the12}.
\begin{theorem}
\label{the12phi-1}
For $n>0$ be an integer,
\begin{enumerate}
\item[(1)] $N_{\phi_-}(n)$ is odd if and only if $24n-1$ has the form
$$
24n-1=p^{4a+1}m^2,
$$
where $p$ is prime, and $m$ and $a$ are integers satisfying
$(m,p)=1$ and $a\ge0$.
\item[(2)]  $N_{\phi_-}(n)\equiv 2\pmod4$ if and only if $24n-1$ has the form
$$
24n-1=p_1^{4a+1}p_2^{4b+1}m^2,
$$
where $p_1$ and $p_2$ are primes such that $\displaystyle\leg{p_1}{p_2}=-1$,
 $(m,p_1p_2)=1$ and $a,b\geq 0$ are integers.
\end{enumerate}
\begin{remark}
Wang \cite[Theorem 6.6]{Wa2020} proved (1) by showing that
$$
N_{\phi_-}(n) \equiv N_{\psi}(n) \pmod{2}.
$$
\end{remark}
\end{theorem}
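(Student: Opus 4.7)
The plan is to reduce Theorem~\ref{the12phi-1} directly to the class-number classification supplied by Theorem~\ref{thm:the12}, via the mod~$4$ congruence $N_{\phi_-}(n)\equiv -H(24n-1)\pmod{4}$ of Lemma~\ref{lem:NphiH}.

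First I would note that $24n-1\equiv 3\pmod{4}$ for every positive integer $n$, so Theorem~\ref{thm:the12} is immediately applicable with argument $24n-1$. Moreover, since $24n-1$ is coprime to~$6$, any factorization $24n-1=p^{4a+1}m^{2}$ (or $p_1^{4a+1}p_2^{4b+1}m^{2}$) forces the primes and $m$ all to be coprime to~$6$, and the elementary observation $(m,6)=1\Rightarrow m^2\equiv 1\pmod{24}$ confirms that such a factorization is consistent with the constraint $24n-1\equiv -1\pmod{24}$; this is the content of the parenthetical remark appearing just before the theorem.

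The second step is the trivial but essential observation that multiplication by $-1$ and by $3$ are both bijections on $\Z/4\Z$ which send odd residues to odd residues and fix the class of~$2$, since $-2\equiv 2\pmod{4}$ and $3\cdot 2=6\equiv 2\pmod{4}$. Combining this with Lemma~\ref{lem:NphiH} yields that $N_{\phi_-}(n)$ is odd if and only if $3H(24n-1)$ is odd, and $N_{\phi_-}(n)\equiv 2\pmod{4}$ if and only if $3H(24n-1)\equiv 2\pmod{4}$.

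Applying parts~(1) and~(2) of Theorem~\ref{thm:the12} to $3H(24n-1)$ then produces precisely the two conditions claimed for $N_{\phi_-}(n)$ in Theorem~\ref{the12phi-1}. I do not anticipate any genuine obstacle: the substantive work has already been carried out in Lemma~\ref{lem:NphiH} (which in turn depends on Lemmas~\ref{lem:NAH} and~\ref{lemphi}) and in Theorem~\ref{thm:the12}, so the present theorem reduces to a short bookkeeping deduction from those ingredients.
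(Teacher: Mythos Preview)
Your proposal is correct and matches the paper's own approach exactly: the paper states that the theorem ``follows easily from Lemma~\ref{lem:NphiH} and Theorem~\ref{thm:the12}'' after noting that $(m,6)=1$ implies $m^2\equiv1\pmod{24}$, which is precisely the reduction you spell out. Your additional remarks about multiplication by $-1$ and $3$ on $\Z/4\Z$ and about $24n-1\equiv3\pmod{4}$ simply make explicit what the paper leaves to the reader.
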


\begin{theorem}
\label{thm:Nphimmod4}
Let $p>3$ be a prime and $p\not\equiv23\pmod{24}$. Suppose $24\delta_p\equiv1\pmod{p^2}$ and $k,n\in \mathbb{Z}$ where $\displaystyle\leg{k}{p}=1$. Then
$$
N_{\phi_-}(p^2n+(pk+1)\delta_p)\equiv 0\pmod4.
$$
\end{theorem}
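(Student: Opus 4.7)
The plan is to mirror exactly the arguments given for Theorem \ref{theA4} and Theorem \ref{thmV1mod4}, exploiting the multiplicative characterization in Theorem \ref{the12phi-1} of when $N_{\phi_-}(m)$ is odd, respectively $\equiv 2 \pmod 4$, in terms of the factorization of $24m - 1$. The hypothesis $24\delta_p \equiv 1 \pmod{p^2}$ together with $\leg{k}{p} = 1$ will force $24m - 1 \equiv pk \pmod{p^2}$ with $p \nmid k$, so that $p \| 24m - 1$; from this single divisibility fact combined with a short mod-$24$ analysis we rule out both the odd case and the $\equiv 2$ case.

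Set $m = p^2 n + (pk+1)\delta_p$. First I would compute $24m - 1 \equiv pk \pmod{p^2}$ directly from the hypothesis on $\delta_p$, and observe that $p \nmid k$ forces $p \| 24m - 1$. To show $N_{\phi_-}(m)$ is even I apply Theorem \ref{the12phi-1}(1): if $N_{\phi_-}(m)$ were odd then $24m - 1 = P^{4a+1}M^2$ with $(M,P)=1$, and matching $p$-adic valuations forces $P = p$ and $a = 0$, so $24m - 1 = pM^2$. Since $\gcd(24m-1,6) = 1$ we have $\gcd(M,6) = 1$ and hence $M^2 \equiv 1 \pmod{24}$, giving $p \equiv 24m - 1 \equiv 23 \pmod{24}$, contrary to $p \not\equiv 23 \pmod{24}$.

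Next, to exclude $N_{\phi_-}(m) \equiv 2 \pmod 4$, I would assume Theorem \ref{the12phi-1}(2) applies and write $24m - 1 = p_1^{4a+1} p_2^{4b+1} M^2$ with $\leg{p_1}{p_2} = -1$ and $(M, p_1 p_2) = 1$. The divisibility $p \| 24m - 1$ forces, without loss of generality, $p_1 = p$ and $a = 0$ (the case $p_2 = p$ is symmetric and in fact easier, giving $\leg{q}{p} = -1$ at once). Setting $q := p_2$, we get $24m - 1 = p\,q^{4b+1} M^2$ with $\leg{p}{q} = -1$ and $(M, pq) = 1$. Combined with $24m - 1 \equiv pk \pmod{p^2}$ this yields $k \equiv q^{4b+1} M^2 \pmod{p}$, whence $\leg{k}{p} = \leg{q}{p}$. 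To invoke quadratic reciprocity I need at least one of $p, q$ to be $\equiv 1 \pmod 4$: since $M^2 \equiv 1 \pmod{24}$ and $q^{4b+1} \equiv q \pmod{24}$ (because any prime coprime to $6$ squares to $1 \pmod{24}$), one has $pq \equiv 23 \pmod{24}$, so in particular $pq \equiv 3 \pmod 4$ and the residues of $p$ and $q$ modulo $4$ are distinct. Reciprocity then yields $\leg{q}{p} = \leg{p}{q} = -1$, contradicting $\leg{k}{p} = 1$. Thus $N_{\phi_-}(m) \not\equiv 2 \pmod 4$, and together with the previous paragraph this gives $N_{\phi_-}(m) \equiv 0 \pmod 4$.

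The only real subtlety in this plan is the mod-$24$ bookkeeping: one must verify that the excluded residue class $p \equiv 23 \pmod{24}$ in the hypothesis is \emph{exactly} the class that would allow the odd case to survive, and that $pq \equiv -1 \pmod{24}$ always supplies a reciprocity-friendly pair $(p,q)$ in the $\equiv 2$ case. Both facts rest on the elementary observation that any integer coprime to $6$ squares to $1 \pmod{24}$, so beyond Theorem \ref{the12phi-1} (which itself rests on the class-number results of Section \ref{sec:class} and the mod $4$ identification in Lemma \ref{lem:NphiH}) and quadratic reciprocity no genuinely new input is required. The hypothesis $p > 3$ enters only to guarantee $\gcd(p,24) = 1$, which is needed both for $\delta_p$ to exist and for the valuation $p \| 24m - 1$ to be meaningful.
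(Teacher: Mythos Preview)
Your proposal is correct and follows essentially the same approach as the paper: compute $24m-1\equiv pk\pmod{p^2}$ to get $p\|24m-1$, rule out the odd case via Theorem~\ref{the12phi-1}(1) and the congruence $p\equiv23\pmod{24}$ it would force, and rule out $N_{\phi_-}(m)\equiv2\pmod4$ via Theorem~\ref{the12phi-1}(2), the mod~$24$ observation $pq\equiv-1\pmod{24}$, and quadratic reciprocity. Your write-up is in fact slightly more explicit than the paper's (you spell out why $p\|24m-1$ forces $P=p$, $a=0$ in the odd case and you distinguish the subcases $p=p_1$ versus $p=p_2$), but the substance is identical.
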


\begin{proof}
Let $p\not\equiv23\pmod{24}$ be a prime $> 3$,
and suppose $n$, $k$ are nonnegative integers where $\displaystyle\leg{k}{p}=1$. 
Suppose
$m=p^2n+(pk+1)\delta_p$, where $\delta_p=\tfrac{1}{24}(23p^2+1)$.
Since
\beq
\label{24m-1}
24m-1\equiv pk\pmod{p^2},
\eeq
we have $p\|24m-1$. Theorem \myref{the12phi-1}(1) implies that $N_{\phi_{-}}(m)$ 
is even
since $p\not\equiv23\pmod{24}$.
Now suppose $N_{\phi_{-}}(m)\equiv 2\pmod4$. Then Theorem \myref{the12phi-1}(2) 
and \myeqref{24m-1} imply that 
$$
24m-1=p^1 q^{4b+1}t^2,\quad\mbox{and}\quad
k\equiv q^{4b+1}t^2\pmod{p},
$$
where $q$ is a prime satisfying 
$\displaystyle\leg{p}{q}=-1$, 
$(pq,t)=1$, and $b\ge0$, $t>0$ are integers. 
Since $pq\equiv-1\pmod{24}$
either $p$ or $q\equiv1\pmod{4}$
so by quadratic reciprocity $\leg{q}{p}=\leg{p}{q}=-1$.
But
$$
\leg{q}{p}=\leg{q^{4b+1}t^2}{p}=\leg{k}{p}=1,
$$
which is a contradiction.
Hence $N_{\phi_{-}}(m)\not\equiv 2\pmod4$, and       
$$
N_{\phi_{-}}(m)\equiv 0\pmod4.
$$
\end{proof}

\subsection{The sixth order mock theta function $\sigma(q)$}
Ramanujan's sixth order mock theta function
$$
\sigma(q):=\sum_{n=0}^\infty N_\sigma(n) q^n :=
\sum_{n=0}^{\infty}\frac{q^{(n+1)(n+2)/2}(-q;q)_n}{(q;q^2)_{n+1}}
$$
was first studied by Andrews and Hickerson \cite{An-Hi91}.
By \cite[Theorem 1.1]{Be-Ch07} we have
\beq
\label{sigmaq}
\sigma(q)=\phi_{-}(q^2)+qP_3(q^2),
\eeq
where
\beq
\label{Pq2}
P_3(q)=(-q;q)_\infty^2(-q^3,-q^3,q^3;q^3)_\infty=\frac{J_6^2J_2^2}{J_3J_1^2}.
\eeq
The following lemma follows from Lemma \myref{lem:NphiH} together with
equations \myeqref{sigmaq} and \myeqref{p11}.

\begin{lemma}
\label{lem:NsigmaH}
For each integer $n>0$, we have
\beq
\label{NsigmaH}
N_{\sigma}(n)\equiv (-1)^{n+1}H(12n-1)\pmod4.
\eeq
\end{lemma}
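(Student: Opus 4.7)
The plan is to use the decomposition $\sigma(q)=\phi_{-}(q^2)+qP_3(q^2)$ from \myeqref{sigmaq} to split the coefficient $N_\sigma(n)$ into an even part and an odd part, and then reduce each part to a Hurwitz class number via results already established.

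First, for even $n=2m$, the contribution to $N_\sigma(n)$ comes entirely from $\phi_{-}(q^2)$, so $N_\sigma(2m)=N_{\phi_{-}}(m)$. By Lemma \myref{lem:NphiH} this equals $-H(24m-1)=-H(12n-1)\pmod4$, which matches $(-1)^{n+1}H(12n-1)$ since $(-1)^{n+1}=-1$ for even $n$.

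For odd $n=2m+1$, the coefficient of $q^n$ in $\sigma(q)$ equals the coefficient of $q^m$ in $P_3(q)=J_6^2J_2^2/(J_3J_1^2)$. The key move is to compare $P_3(q)$ with the generating function $\mathscr{H}_{24,11}(q)=J_3^3J_2^2/J_1^2$ from \myeqref{p11}: their ratio is $J_6^2/J_3^4$, and by the last congruence in \myeqref{eq:thetamod4} applied with $q\mapsto q^3$ we have $J_3^4/J_6^2\equiv 1\pmod4$, hence (inverting in $\mathbb{Z}[[q]]$) $J_6^2/J_3^4\equiv 1\pmod4$ as well. Therefore $P_3(q)\equiv\mathscr{H}_{24,11}(q)\pmod4$, so the coefficient of $q^m$ in $P_3(q)$ is congruent to $H(24m+11)=H(12(2m+1)-1)=H(12n-1)\pmod4$, which agrees with $(-1)^{n+1}H(12n-1)$ since $(-1)^{n+1}=1$ for odd $n$.

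There is no real obstacle here; the only thing to be careful about is the elementary step of inverting $J_3^4/J_6^2\equiv 1\pmod 4$ to obtain $J_6^2/J_3^4\equiv 1\pmod 4$, which is immediate from writing $J_3^4=J_6^2(1+4f(q))$ with $f\in\mathbb{Z}[[q]]$ and expanding the geometric series $(1+4f)^{-1}$. Combining the two cases yields \myeqref{NsigmaH}.
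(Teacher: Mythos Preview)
Your proof is correct and follows essentially the same route as the paper: split into even and odd $n$ via \myeqref{sigmaq}, handle the even case with Lemma~\myref{lem:NphiH}, and for the odd case reduce $P_3(q)$ to $\mathscr{H}_{24,11}(q)$ modulo $4$ using $J_3^4/J_6^2\equiv1\pmod4$ from \myeqref{eq:thetamod4} with $q\mapsto q^3$. The only difference is cosmetic: the paper writes the ratio as $(J_6/J_3^2)^2$ rather than $J_6^2/J_3^4$, and your explicit remark about inverting $1+4f(q)$ just makes transparent a step the paper leaves implicit.
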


\begin{proof}
By Lemma \myref{lem:NphiH} and equation \myeqref{sigmaq} we have
$$
N_{\sigma}(2n)=N_{\phi_{-}}(n)\equiv -H(24n-1)\pmod4,
$$
which implies \myeqref{NsigmaH} for even $n$.

By \myeqref{eq:thetamod4}, \myeqref{p11} and \myeqref{Pq2} we have
$$
P_3(q)=\frac{J_6^2J_2^2}{J_3J_1^2}
=\frac{J_3^3J_2^2}{J_1^2}\cdot\Parans{\frac{J_6}{J_3^2}}^2\equiv \frac{J_3^3J_2^2}{J_1^2}
=\mathscr{H}_{24,11}(q)\pmod4.
$$
Hence
$$
N_{\sigma}(2n+1)\equiv H(24n+11)\pmod4,
$$
and \myeqref{NsigmaH} holds for odd $n$.
\end{proof}

Again we note that if $(m,6)=1$ then $m^2\equiv1\pmod{12}$.
Thus the following theorem follows easily from Lemma \myref{lem:NsigmaH} and
Theorem \myref{thm:the12}.

\begin{theorem}
\label{the12sigma}
For $n>0$ be an integer,
\begin{enumerate}
\item[(1)] $N_{\sigma}(n)$ is odd if and only if $12n-1$ has the form
$$
12n-1=p^{4a+1}m^2,
$$
where $p$ is prime, and $m$ and $a$ are integers satisfying
$(m,p)=1$ and $a\ge0$.
\item[(2)]  $N_{\sigma}(n)\equiv 2\pmod4$ if and only if $12n-1$ has the form
$$
12n-1=p_1^{4a+1}p_2^{4b+1}m^2,
$$
where $p_1$ and $p_2$ are primes such that $\displaystyle\leg{p_1}{p_2}=-1$,
 $(m,p_1p_2)=1$ and $a,b\geq 0$ are integers.
\end{enumerate}
\end{theorem}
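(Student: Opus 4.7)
The plan is to derive Theorem \ref{the12sigma} as an essentially immediate consequence of Lemma \ref{lem:NsigmaH} combined with Theorem \ref{thm:the12}. The key observation is that $12n-1 \equiv 3 \pmod 4$ for every positive integer $n$, which places $12n-1$ squarely in the range where Theorem \ref{thm:the12} is applicable, and moreover $12n-1 \geq 11 > 3$, so $H(12n-1)$ is an integer (the exceptional case $H(3)=1/3$ never arises).

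First I would translate the congruence $N_\sigma(n) \equiv (-1)^{n+1} H(12n-1) \pmod 4$ from Lemma \ref{lem:NsigmaH} into statements about parity and about the residue mod $4$. Since $(-1)^{n+1} = \pm 1$, the parity of $N_\sigma(n)$ coincides with the parity of $H(12n-1)$. For the mod $4$ condition, observe that $N_\sigma(n) \equiv 2 \pmod 4$ if and only if $H(12n-1) \equiv 2 \pmod 4$, because $(-1)\cdot 2 \equiv 2 \pmod 4$. Next I would pass from $H(12n-1)$ to $3H(12n-1)$: multiplication by the odd number $3$ preserves both the parity and (as one checks directly) the property of being congruent to $2 \pmod 4$, so the characterizations in Theorem \ref{thm:the12}, which are stated for $3H$, apply verbatim to $H$.

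With these reductions in hand, part (1) of Theorem \ref{the12sigma} follows from Theorem \ref{thm:the12}(1) applied to $n' = 12n-1$: $N_\sigma(n)$ is odd exactly when $12n-1 = p^{4a+1} m^2$ with $(m,p)=1$ and $a \geq 0$. Similarly, part (2) follows from Theorem \ref{thm:the12}(2) applied to $n' = 12n-1$.

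There is essentially no obstacle here, since all the genuine work has already been done: the arithmetic structure of $H(n) \pmod 4$ was established in Theorem \ref{thm:the12}, and the link between $N_\sigma(n)$ and the Hurwitz class number was forged in Lemma \ref{lem:NsigmaH} using the decomposition \myeqref{sigmaq} together with the identity \myeqref{p11} for $\mathscr{H}_{24,11}$. The only point to double-check is the congruence $12n-1 \equiv 3 \pmod 4$ and the compatibility between the $3H$-formulation of Theorem \ref{thm:the12} and the $H$-formulation needed here, both of which are entirely routine.
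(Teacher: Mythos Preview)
Your proposal is correct and follows essentially the same route as the paper: the paper simply remarks that the theorem ``follows easily from Lemma \ref{lem:NsigmaH} and Theorem \ref{thm:the12}'' (after noting that $12n-1\equiv 3\pmod 4$ and that $m^2\equiv 1\pmod{12}$ whenever $(m,6)=1$), and you have spelled out precisely those details.
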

\begin{remark}
Wang \cite[Theorem 6.4]{Wa2020} proved (1) by using a Hecke-Rogers series
identity of Andrews and Hickerson \cite{An-Hi91}.
\end{remark}

\begin{theorem}
\label{thm:Nsigmamod4}
Let $p>3$ be a prime and $p\not\equiv11\pmod{12}$. 
Suppose $12\delta_p\equiv1\pmod{p^2}$ and $k,n\in \mathbb{Z}$ where 
$\displaystyle\leg{k}{p}=1$. Then
$$
N_{\sigma}(p^2n+(pk+1)\delta_p)\equiv 0\pmod4.
$$
\end{theorem}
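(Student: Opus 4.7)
The plan is to mimic the arguments given for Theorems \myref{theA4}, \myref{thmV1mod4}, and \myref{thm:Nphimmod4}, since the structural ingredients (a characterization of parity and of residue class mod $4$ for $N_\sigma(m)$ in terms of the factorization of $12m-1$) are already in hand via Theorem \myref{the12sigma}. First I would compute $\delta_p=\tfrac{1}{12}(11p^2+1)$ (the unique residue mod $p^2$ with $12\delta_p\equiv1\pmod{p^2}$) and, setting $m=p^2n+(pk+1)\delta_p$, check the key congruence
$$
12m-1\equiv pk\pmod{p^2},
$$
which, combined with $\leg{k}{p}=1$ so that $p\nmid k$, yields $p\|12m-1$.

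Next I would show $N_\sigma(m)$ is even. By Theorem \myref{the12sigma}(1), if $N_\sigma(m)$ were odd then $12m-1=p^{4a+1}r^2$ for some prime and integer $r$ coprime to that prime. Since $p\|12m-1$, we must have that prime equal to $p$ and $a=0$, so $12m-1=pr^2$. Because $12m-1$ is coprime to $6$, so is $r$; hence $r^2\equiv1\pmod{12}$, forcing $p\equiv -1\equiv 11\pmod{12}$, contrary to hypothesis. Thus $N_\sigma(m)$ is even.

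The heart of the argument is ruling out $N_\sigma(m)\equiv 2\pmod4$. Assuming it does, Theorem \myref{the12sigma}(2) gives $12m-1=p_1^{4a+1}p_2^{4b+1}t^2$ with $\leg{p_1}{p_2}=-1$. Since $p\|12m-1$, one of the $p_i$ equals $p$ with exponent $1$; say $p_1=p$ and $a=0$, so $12m-1=p\,q^{4b+1}t^2$ with $q=p_2$, $\leg{p}{q}=-1$, and $(pq,t)=1$. Reducing mod $p^2$ gives
$$
k\equiv q^{4b+1}t^2\pmod{p},\qquad\text{hence}\qquad \leg{k}{p}=\leg{q}{p}.
$$
Reducing $12m-1\equiv -1\pmod{12}$ and using $t^2\equiv 1\pmod{12}$ (since $(t,6)=1$) yields $pq\equiv 11\pmod{12}$, so in particular $pq\equiv 3\pmod 4$, meaning exactly one of $p,q$ is $\equiv 1\pmod 4$. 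Quadratic reciprocity then gives $\leg{q}{p}=\leg{p}{q}=-1$, and therefore $\leg{k}{p}=-1$, contradicting the hypothesis $\leg{k}{p}=1$.

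The main conceptual obstacle has already been handled upstream: the delicate part is the mod $4$ characterization in Theorem \myref{the12sigma}, which in turn rests on Lemma \myref{lem:NsigmaH} relating $N_\sigma(n)$ to $H(12n-1)\pmod4$. Given these, the only subtlety in the present step is verifying that the exclusion of $p\equiv 11\pmod{12}$ is exactly what is needed both (i) to rule out odd $N_\sigma(m)$ via the form $pr^2\equiv -1\pmod{12}$ and (ii) to guarantee that the pair $(p,q)$ forced by the ``$\equiv 2\pmod 4$'' case still satisfies the reciprocity hypothesis, namely that at least one of $p,q$ is $\equiv 1\pmod 4$. Both follow from the single congruence $pq\equiv 11\pmod{12}$, so the proof ends cleanly with the contradiction $N_\sigma(m)\equiv 0\pmod 4$.
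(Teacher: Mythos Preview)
Your proof is correct and follows essentially the same approach as the paper's own argument: compute $12m-1\equiv pk\pmod{p^2}$ to get $p\|12m-1$, invoke Theorem \myref{the12sigma}(1) for evenness, then use Theorem \myref{the12sigma}(2) together with $pq\equiv 11\pmod{12}$ and quadratic reciprocity to reach the contradiction $\leg{k}{p}=-1$. Your write-up is in fact slightly more explicit than the paper's in deriving the evenness step (and avoids the paper's typographical slip ``$p\not\equiv23\pmod{24}$'' where $11\pmod{12}$ is meant).
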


\begin{proof}
Let $p\not\equiv11\pmod{12}$ be a prime $> 3$,
and suppose $n$, $k$ are nonnegative integers where $\displaystyle\leg{k}{p}=1$. 
Suppose
$m=p^2n+(pk+1)\delta_p$, where $\delta_p=\tfrac{1}{12}(11p^2+1)$.
Since
\beq
\label{12m-1}
12m-1\equiv pk\pmod{p^2},
\eeq
we have $p\|12m-1$. Theorem \myref{the12sigma}(1) implies that $N_{\sigma}(m)$ 
is even
since $p\not\equiv23\pmod{24}$.
Now suppose $N_{\sigma}(m)\equiv 2\pmod4$. Then Theorem \myref{the12sigma}(2) 
and \myeqref{12m-1} imply that 
$$
12m-1=p^1 q^{4b+1}t^2,\quad\mbox{and}\quad
k\equiv q^{4b+1}t^2\pmod{p},
$$
where $q$ is a prime satisfying 
$\displaystyle\leg{p}{q}=-1$, 
$(pq,t)=1$, and $b\ge0$, $t>0$ are integers. 
Since $pq\equiv-1\pmod{12}$
either $p$ or $q\equiv1\pmod{4}$
so by quadratic reciprocity $\leg{q}{p}=\leg{p}{q}=-1$.
However
$$
\leg{q}{p}=\leg{q^{4b+1}t^2}{p}=\leg{k}{p}=1,
$$
which is a contradiction.
Hence $N_{\sigma}(m)\not\equiv 2\pmod4$, and       
$$
N_{\sigma}(m)\equiv 0\pmod4.
$$
\end{proof}

\section{Mod $4$ conjectures for unimodal sequences and the Andrews spt-function}
\label{sec:seq}

In Section \myref{sec:mock} we  derived mod $4$ congruences for four mock theta 
functions that are  closely associated with the Hurwitz class number. 
In this section we prove the corrected version of Kim, Lim and Lovejoy's mod $4$
conjectures for odd-balanced unimodal sequences, the second author's
mod $4$ conjectures for the Andrews's spt-function and
Bryson, Ono, Pitman, and Rhoades's mod $4$ conjectures for
strongly unimodal sequences. The proofs depend on some of the mod $4$ congruences
for certain mock theta functions in the previous section, as well as some
new Hecke-Rogers identities.

\subsection{Number of partitions in odd-balanced unimodal sequences}
\label{subsec:oddbal}
Recall from Section \myref{sec:intro} that
\beq
\label{eq:Vzqdef}
\mathcal{V}(z;q) := \sum_{m,n} v(m,n) z^m q^n 
= \sum_{n=0}^\infty \frac{(-zq;q)_n (-z^{-1}q;q)_n q^{n}}{(q;q^2)_{n+1}}.
\eeq
is the two-parameter generating function for odd-balanced unimodal
sequences of size $2n+2$ and rank $m$.

By \cite[p.258]{Mo2014} we have
\begin{align}
\label{eq:Vzqid}
\left(1+\frac{1}{z}\right)\mathcal{V}(z,q)=&\frac{(-q;q)_\infty}{(q;q)_\infty}\left(\sum_{n,r\geq0}-\sum_{n,r<0}\right)(-1)^nz^rq^{n^2+2n+(2n+1)r+r(r+1)/2}\\
=&\frac{(-q;q)_\infty}{(q;q)_\infty}\sum_{n,r\geq0}(-1)^n(z^r+z^{-r-1})q^{n^2+2n+(2n+1)r+r(r+1)/2}
\nonumber\\
=&\frac{(-q;q)_\infty}{(q;q)_\infty}\sum_{n,r\geq0}(-1)^n(z^r+z^{-r-1})q^{(n+r)^2+2(n+r)-r(r+1)/2}
\nonumber\\
=&\frac{(-q;q)_\infty}{(q;q)_\infty}\sum_{n}^{\infty}\sum_{r=0}^{n}(-1)^{n+r}(z^r+z^{-r-1})q^{n^2+2n-r(r+1)/2}.
\nonumber
\end{align}
See also \cite[Eq (1.12)]{Ki-Li-Lo16}.            
Hence letting $z=1$ we find that
\beq
\label{VHR}
\sum_{n=0}^{\infty}v(n)q^n=\mathcal{V}(1,q)=\frac{(-q;q)_\infty}{(q;q)_\infty}\sum_{n=0}^{\infty}\sum_{r=0}^{n}(-1)^{n+r}q^{n^2+2n-r(r+1)/2},
\eeq
where $v(n)$ is the number of odd-balanced unimodal sequences 
of size $2n+2$. Letting $z=i$ in \myeqref{eq:Vzqdef} and \myeqref{eq:Vzqid}
we have
\beq
\label{AHR} 
\frac{A(q)}{q}=\mathcal{V}(i,q)
=\frac{(-q;q)_\infty}{(q;q)_\infty}
\sum_{n=0}^{\infty}\sum_{r=0}^{n}(-1)^{n+r+r(r+1)/2}q^{n^2+2n-r(r+1)/2},
\eeq
by \myeqref{Aqdef}, after some simplification.
The form of the series in equations \myeqref{VHR} and \myeqref{AHR} 
is quite similar. It is easy see that
$$
v(n)\equiv N_A(n+1)\pmod2.
$$
As expected Theorem \myref{the12A}(1) confirms the equivalent 
\cite[Theorem 1.1]{Ki-Li-Lo16}. We now consider $v(n)$ mod $4$.
In this section we prove
\begin{theorem}
\label{thev4}
Let $p\not\equiv\pm1\pmod8$ be an odd prime, suppose $8\delta_p\equiv1\pmod{p^2}$ and $k,n\in \mathbb{Z}$ where $\displaystyle\leg{k}{p}=1$. Then
$$
v(p^2n+(pk-7)\delta_p)\equiv 0\pmod4.
$$
\end{theorem}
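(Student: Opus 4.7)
The plan is to reduce Theorem \ref{thev4} to Theorem \ref{theA4} via an auxiliary mod-$4$ identity. A comparison of \eqref{VHR} with \eqref{AHR} shows that the two Hecke-Rogers series differ only by the sign $(-1)^{r(r+1)/2}$, which equals $+1$ for $r \equiv 0, 3 \pmod 4$ and $-1$ for $r \equiv 1, 2 \pmod 4$. Writing $S^{\#}(q)$ for the Hecke-Rogers partial sum indexed by $r \equiv 1, 2 \pmod 4$, one therefore gets
\[
\sum_{n \ge 0} v(n)\, q^n \;-\; \frac{A(q)}{q} \;=\; 2 \cdot \frac{J_2}{J_1^2}\, S^{\#}(q),
\]
and hence modulo $4$
\[
v(m) \;\equiv\; N_A(m+1) + 2\, G(m) \pmod{4},
\]
where $G(m)$ denotes the coefficient of $q^m$ in $(J_2/J_1^2)\, S^{\#}(q)$ taken modulo $2$. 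Under the substitution $X = 2(n+1)$, $Y = 2r+1$ the exponents in $S^{\#}(q)$ become $(2X^2 - Y^2 - 7)/8$ with $X > 0$ even, $0 < Y < X$ odd, and $Y \equiv \pm 3 \pmod{8}$; since $J_2/J_1^2 \equiv 1 \pmod{2}$, $G(m)$ is the mod-$2$ count of representations of $8m+7$ by the indefinite form $2X^2 - Y^2$ with $Y \equiv \pm 3 \pmod{8}$, equivalently with $(2/Y) = -1$.

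Setting $m = p^2 n + (pk-7)\delta_p$, one verifies that $m + 1 = p^2 n + (pk'+1)\delta_p$ with $k' \equiv k \pmod{p}$, so $(k'/p) = 1$. Since $p \not\equiv \pm 1 \pmod{8}$ implies $p \not\equiv 7 \pmod{8}$, Theorem \ref{theA4} gives $N_A(m+1) \equiv 0 \pmod{4}$, and the problem reduces to showing $G(m) \equiv 0 \pmod{2}$. The hypothesis $p \not\equiv \pm 1 \pmod{8}$ is equivalent to $(2/p) = -1$, and the character $(2/Y)$ built into the definition of $G$ is what should convert this into the desired mod-$2$ vanishing, via a quadratic-reciprocity argument parallel to the one closing the proof of Theorem \ref{theA4} but now invoking the second supplementary law as well.

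The main obstacle is precisely this mod-$2$ evaluation of $G(q)$. My approach would mirror Section \ref{sec:mock}: derive a Humbert-type formula for $G(q)$ in the spirit of the proof of Lemma \ref{lem:NAH} (where $\mathscr{H}_{8,7}(q)$ was identified modulo $4$ with $-A(-q)/q$ via the Appell-Lerch representation \eqref{Aqap} of $A(q)$ and Humbert's series \eqref{H87}), but now with an additional character twist by $(2/\cdot)$ so as to isolate the $Y \equiv \pm 3 \pmod{8}$ contribution. The elementary congruences \eqref{eq:thetamod4} should then reduce $G(q)$ mod $2$ to a twisted Hurwitz-type theta series whose $q^m$-coefficient vanishes on the arithmetic progression in question. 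This is exactly the kind of new Hecke-Rogers identity for indefinite binary quadratic forms promised in the introduction; any residual eta-quotient corrections that arise can be identified as modular functions on $\Gamma_0(N)$ for a suitable level $N$ and verified using the second author's maple \texttt{ETA} package, as in the proof of Lemma \ref{lemphi}.
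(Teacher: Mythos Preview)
Your decomposition is exactly right and matches the paper's: write
\[
v(m)-N_A(m+1)\equiv 2G(m)\pmod 4,
\]
verify $N_A(m+1)\equiv 0\pmod 4$ from Theorem \ref{theA4}, and reduce to controlling $G(m)$. But you are making the final step far harder than it is. You do not need a Humbert-type identity, a twisted Hurwitz series, or the \texttt{ETA} package; in fact you do not need $G(m)\equiv 0\pmod 2$ but rather $G(m)=0$ exactly.

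Here is the missing observation (this is the paper's Lemma \ref{lemdv}). With your substitution $X=2(n+1)$, $Y=2r+1$, every term in $G(m)$---indeed every term in the full untwisted Hecke--Rogers sum $D_v$ of \eqref{VHR}---contributes to $q^m$ only when $8m+7=2X^2-Y^2$. Under the hypotheses of Theorem \ref{thev4} one has $p\parallel 8m+7$ (since $8m+7\equiv pk\pmod{p^2}$), and $p\not\equiv\pm1\pmod 8$ says precisely $\leg{2}{p}=-1$. Reading $2X^2\equiv Y^2\pmod p$ with $2$ a non-residue forces $p\mid X$ and $p\mid Y$, whence $p^2\mid 2X^2-Y^2=8m+7$, contradicting $p\parallel 8m+7$. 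So there are \emph{no} representations at all, and $G(m)=0$. The restriction $Y\equiv\pm3\pmod 8$ and the twist $(2/Y)$ that you built in are irrelevant: the entire representation count vanishes, not just a twisted subsum modulo $2$.

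In short, the same quadratic-residue trick you were saving for a reciprocity argument already kills $G(m)$ outright at the level of the indefinite form $2X^2-Y^2$. Once you see this, the proof is three lines after your displayed congruence.
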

\begin{remark}
We have corrected Kim, Lim and Lovejoy's original Conjecture \myref{conjv}.
\end{remark}
We define 
$$
D_v(q):=\sum_{n=0}^{\infty}d_v(n)q^n
:=\sum_{m=0}^{\infty}\sum_{r=0}^{m}q^{m^2+2m-r(r+1)/2}.
$$

\begin{lemma}
\label{lemdv}
If $p\equiv 3,5\pmod8$ is prime and $p\|8n+7$ then $d_v(n)=0$.
\end{lemma}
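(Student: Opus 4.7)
The plan is to show that the condition $p \| 8n+7$ with $p \equiv 3, 5 \pmod 8$ forces the representation set defining $d_v(n)$ to be empty, via an elementary quadratic residue argument on a completed square.

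First I would observe that $d_v(n)$ counts the number of pairs $(m,r) \in \Z_{\ge 0}^2$ with $0 \le r \le m$ satisfying $m^2 + 2m - r(r+1)/2 = n$. Multiplying this equation by $8$ and completing both squares gives the key identity
\beq
8n + 7 = 8(m+1)^2 - (2r+1)^2.
\label{eq:vkeyid}
\eeq
This reduces the problem to counting representations of $8n+7$ by the indefinite form $8X^2 - Y^2$ with $X \ge 1$ and $Y$ odd satisfying $1 \le Y \le 2X-1$.

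Next, suppose $p$ is an odd prime with $p \| 8n+7$ and $p \equiv 3, 5 \pmod 8$. From \myeqref{eq:vkeyid} we have $(2r+1)^2 \equiv 8(m+1)^2 \pmod{p}$. If $p \mid m+1$ then $p \mid 2r+1$, and \myeqref{eq:vkeyid} immediately yields $p^2 \mid 8n+7$, contradicting $p \| 8n+7$. Hence $p \nmid m+1$, and we may divide to conclude that $8$ is a nonzero quadratic residue modulo $p$. Since $8 = 2 \cdot 2^2$, this forces $2$ to be a quadratic residue modulo $p$, i.e.\ $p \equiv \pm 1 \pmod 8$, contradicting the hypothesis $p \equiv 3, 5 \pmod 8$.

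Therefore no pair $(m,r)$ contributes to $d_v(n)$, so $d_v(n) = 0$. There is no real obstacle here: the lemma is essentially the statement that $8X^2 - Y^2$ cannot represent an integer exactly divisible by a prime $p$ for which $2$ is a non-residue, and the completed-square identity \myeqref{eq:vkeyid} makes this transparent. The result will later be combined with the Hecke-Rogers expansion \myeqref{VHR} (after reducing $\frac{(-q;q)_\infty}{(q;q)_\infty}$ modulo $4$ via \myeqref{eq:thetamod4}) to extract the mod $4$ vanishing of $v(p^2 n + (pk-7)\delta_p)$ in Theorem \myref{thev4}.
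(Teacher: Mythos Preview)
Your proof is correct and takes essentially the same approach as the paper's own proof: both complete the square to rewrite $8n+7 = 8(m+1)^2 - (2r+1)^2$, then use that $\leg{8}{p}=-1$ for $p\equiv 3,5\pmod 8$ to force $p\mid (m+1)$ and $p\mid (2r+1)$, contradicting $p\| 8n+7$. Your case split on whether $p\mid m+1$ is slightly more explicit than the paper's presentation, but the argument is identical in substance.
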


\begin{proof}
Suppose that $p\equiv 3,5\pmod8$ is prime and $p\|8n+7$. 
Suppose by way of contradiction that
$d_v(n)\ne0$. Then
$$ 
8n+7= 8(m^2+2m - r(r+1)/2) + 7 = 8(m+1)^2-(2r+1)^2,
$$
for some integers $m\ge0$ and  $0\leq r\leq m$. Since $p\mid 8n+7$ this implies 
$$
8(m+1)^2\equiv(2r+1)^2\pmod{p}.
$$
Since  $p\equiv 3,5\pmod8$,
$\displaystyle\leg{8}{p}=-1$ and $(m+1)\equiv (2r+1)\equiv 0 \pmod{p}$.
But this implies $p^2 \mid 8n+7$, which contradicts $p\| 8n+7$.
We conclude that $d_v(n)=0$.
\end{proof}

\begin{proof}[Proof of Theorem \myref{thev4}]
Let
$$
D_0(q):=
\mathcal{V}(1,q)-\frac{A(q)}{q}=\sum_{n=0}^{\infty}(v(n)-N_A(n+1))q^n.
$$
By \myeqref{VHR} and \myeqref{AHR} we have 
\begin{align*}
D_0(q)&=\frac{(-q;q)_\infty}{(q;q)_\infty}
         \sum_{n=0}^{\infty}
         \sum_{r=0}^{n}(-1)^{n+r}(1-(-1)^{r(r+1)/2})q^{n^2+2n-r(r+1)/2}\\
&=2\frac{(-q;q)_\infty}{(q;q)_\infty}
      \sum_{n=0}^{\infty}\sum_{r=0}^{n}(-1)^{n+r}\epsilon(r)q^{n^2+2n-r(r+1)/2},
\end{align*}
where $\epsilon(r)=1$ if $r\equiv 1,2\pmod4$ and $\epsilon(r)=0$ otherwise. 
We recall \myeqref{eq:thetamod4} and note that
$$
\frac{(q;q)_\infty}{(-q;q)_\infty} = \frac{J_1^2}{J_2}
=1+2\sum_{n=1}^{\infty}(-1)^nq^{n^2}\equiv 1\pmod2.
$$
Therefore we have
\beq
\label{d0qv}
D_0(q)  \equiv 
  2\sum_{n=0}^{\infty}
   \sum_{r=0}^{n}(-1)^{n+r}\epsilon(r)q^{n^2+2n-r(r+1)/2}\pmod4.
\eeq
Let
$$
\sum_{n=0}^{\infty}d(n)q^n:=\sum_{n=0}^{\infty}\sum_{r=0}^{n}(-1)^{n+r}\epsilon(r)q^{n^2+2n-r(r+1)/2}.
$$
   Now assume $p\not\equiv\pm1\pmod{8}$ is an odd prime,  
and let $n$, $k$ be  integers where $\displaystyle\leg{k}{p}=1$. 
Suppose
$m=p^2n+(pk+1)\delta_p$, where $\delta_p=\tfrac{1}{8}(7p^2+1)$.
Then
$$
8m + 7 \equiv pk \pmod{p^2},
$$
which implies that $p\|8m+7$.  Since $p\equiv 3,5\pmod{8}$, Lemma \myref{lemdv}
implies $d_v(m)=0$.  But $d_v(m)=0$ implies $d(m)=0$ and
$$
N_A(m+1) \equiv v(m) \pmod{4},
$$
by \myeqref{d0qv}. Now
$$
m+1 \equiv (pk+1) \delta_p - 8\delta_p + 1 \equiv (pk+1)\delta_p \pmod{p^2}.
$$
So $N_A(m+1)\equiv 0 \pmod{4}$ by Theorem \myref{theA4} and hence $v(m)\equiv 0 \pmod4$.
\end{proof}

\subsection{The Andrews' $\mathrm{spt}(n)$ function.}
\label{subsec:spt}

By equation \myeqref{eq:sptPsimod4} and \cite[Theorem 1.3]{An-Ga-Li13} we have
the following theorem.
\begin{theorem}
\label{thepsi1}
For each $n>0$, $N_{\psi}(n)$ is odd if and only if 
$$
24n-1=p^{4a+1}m^2,
$$
for some prime $p$, and some integers $a$, $m$
satisfying
$(m,p)=1$ and $a\ge0$.
\end{theorem}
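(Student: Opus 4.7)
The plan is to reduce the parity characterization of $N_\psi(n)$ directly to the known parity characterization of $\spt(n)$ by means of the Andrews--Garvan--Liang congruence already recorded as \myeqref{eq:sptPsimod4}, namely
$$
\spt(n) \equiv (-1)^{n-1}\, N_\psi(n) \pmod{4}.
$$
This in particular yields $N_\psi(n) \equiv \spt(n) \pmod{2}$, so $N_\psi(n)$ is odd if and only if $\spt(n)$ is odd.

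Once the problem is reduced in this way, I would invoke \cite[Theorem 1.3]{An-Ga-Li13}, which asserts that $\spt(n)$ is odd precisely when $24n-1$ has the form $p^{4a+1} m^2$ with $p$ prime, $(m,p)=1$ and $a \geq 0$. Combining this with the parity equivalence above gives the theorem at once.

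In this formulation the proof itself is a short syllogism; the real content is carried by the two cited results. The harder of the two to establish is \cite[Theorem 1.3]{An-Ga-Li13}, whose proof uses a Hecke--Rogers type identity for the spt-generating function and ties the parity of $\spt(n)$ to that of $H(24n-1)$, precisely the Hurwitz-class-number philosophy pursued in Section \myref{sec:mock}. An alternative and more self-contained route, in the spirit of the treatments of $A(q)$, $\phi_-(q)$, $\sigma(q)$ and $V_1(q)$ earlier in this paper, would be to bypass $\spt(n)$ entirely and prove directly a congruence of the form $N_\psi(n) \equiv \pm H(24n-1) \pmod{2}$ by extracting an indefinite-theta expansion of $\psi(q)$ and comparing it with $\mathscr{H}_{24,23}(q)$; the characterization would then follow from Theorem \myref{thm:the12}(1). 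The main obstacle along this second route would be producing the right Hecke--Rogers expansion of $\psi(q)$ modulo $4$, since Humbert's formula \myeqref{H2423} for $\mathscr{H}_{24,23}(q)$ is not itself in a form that matches a standard series for $\psi(q)$.
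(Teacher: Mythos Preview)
Your proposal is correct and is essentially identical to the paper's own justification: the paper simply states ``By equation \myeqref{eq:sptPsimod4} and \cite[Theorem 1.3]{An-Ga-Li13} we have the following theorem,'' which is exactly the reduction you describe. Your additional commentary on an alternative route via a direct comparison with $\mathscr{H}_{24,23}(q)$ is reasonable but unnecessary here.
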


Berndt and Chan \cite[p.776]{Be-Ch07} found a Hecke-Rogers identity
for the sixth order mock theta function $\phi_{-}(q)$.
\beq
\label{phi-hec}
\frac{J_1^2}{J_2}\phi_{-}(q)
=\sum_{n=1}^{\infty}\sum_{m=1-n}^{n}(-1)^{n-1}q^{n(3n-1)-2m^2+m}(1-q^{2n}).
\eeq
We need a similar Hecke-Rogers identity for $\psi(q)$.

Andrews \cite[Eq. (1.10)]{An2012}, Mortenson \cite[Eq. (2.5)]{Mo2013}, 
Chen and Wang \cite[Eq. (4.37)]{Ch-Wa2020} have  found Hecke-Rogers series
for $\psi(q)$. The second author \cite{Ga15a} has shown how to express 
each of Ramanujan's third order mock theta functions in terms of Hecke-Rogers
series. We need a new additional  Hecke-Rogers series for $\psi(q)$. 

Using Jacobi's triple product
identity \myeqref{jacp} we find that
\beq
\label{eq:trizid}
\sum_{n=-\infty}^\infty (-1)^n q^{n(n+1)/2 + bn} =0
\eeq
for any integer $b$. Similarly for any integer $b$ we have
\beq
\label{eq:pentid}
\sum_{n=-\infty}^\infty (-1)^n q^{n(3n+1)/2 + bn} =
\begin{cases}
(-1)^b q^{-b(b+1)/2} (q)_\infty, & \mbox{if $b\equiv 0\pmod{3}$},\\
0, & \mbox{if $b\equiv 1\pmod{3}$},\\
(-1)^{b-1} q^{-b(b+1)/2} (q)_\infty, & \mbox{if $b\equiv 2\pmod{3}$}.
\end{cases}
\eeq
See \cite[p.99]{At-SwD}.

\begin{lemma}
\label{lem:psiHR}
We have
\beq
\label{psihec}
\frac{J_1^2}{J_2}\psi(q)
=\sum_{n=1}^{\infty}\sum_{m=1-n}^{n}(-1)^{m-1}q^{n(3n-1)-2m^2+m}(1-q^{2n}).
\eeq
\end{lemma}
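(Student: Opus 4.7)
The plan is to prove \eqref{psihec} by adapting the technique Berndt and Chan \cite{Be-Ch07} used to derive their companion identity \eqref{phi-hec} for $\phi_{-}(q)$. The two identities share the same quadratic form $n(3n-1)-2m^2+m$, the same telescoping factor $(1-q^{2n})$, and the same inner summation range $1-n\le m\le n$; they differ only in the sign, $(-1)^{n-1}$ being replaced by $(-1)^{m-1}$. This suggests that the derivation for $\psi$ proceeds along parallel lines to the one for $\phi_{-}$, with a different pairing of the summation indices at one key step.

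First I would start with a Hecke--Rogers-type double-series representation of $\psi(q)$, either one already in the literature (for instance the identities in Andrews \cite{An2012}, Mortenson \cite{Mo2013}, Chen--Wang \cite{Ch-Wa2020}, or in \cite{Ga15a}), or one derived directly from $\psi(q)=\sum_{n\ge 1}q^{n^2}/(q;q^2)_n$ via Bailey's lemma, or from the Hickerson--Mortenson \cite{Hi-Mo14} Appell--Lerch representation of $\psi(q)$. The goal is to obtain an expression of the shape $\frac{1}{(q;q)_\infty}\sum_{n,j}\varepsilon(n,j)\,q^{Q(n,j)}$ for an indefinite binary quadratic form $Q$ of discriminant $24$.

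Second I would multiply through by $J_1^2/J_2$ and fold the doubly-infinite Hecke--Rogers sum onto the finite strip $\{(n,m):n\ge 1,\ 1-n\le m\le n\}$ with telescoping factor $(1-q^{2n})$. Here the pairing $m\leftrightarrow 1-m$ is useful: it gives
\[
\sum_{m=1-n}^{n}(-1)^{m-1}q^{-2m^2+m}\;=\;\sum_{m=1}^{n}(-1)^{m-1}q^{-2m^2+m}(1-q^{2m-1}),
\]
so that the right-hand side of \eqref{psihec} takes the symmetric form
\[
\sum_{n\ge 1}\sum_{m=1}^{n}(-1)^{m-1}q^{3n^2-n-2m^2+m}(1-q^{2n})(1-q^{2m-1}).
\]
To collapse the ambient Hecke--Rogers sum to exactly this finite range, one must eliminate all tail and boundary contributions. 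This is the role of the elementary vanishing identities \eqref{eq:trizid} and \eqref{eq:pentid}, which were recorded immediately before this lemma for exactly this purpose: applied with suitable values of $b$ they kill the partial theta sums produced by writing each inner sum over the complementary range in terms of Jacobi's triple product \eqref{jacp}.

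The main obstacle will be precisely this second step: locating the correct starting representation for $\psi(q)$ and tracking the combinatorics of its reduction so that, after \eqref{jacp}, \eqref{eq:trizid} and \eqref{eq:pentid} have been applied, the surviving term carries the sign $(-1)^{m-1}$ rather than $(-1)^{n-1}$. In the Berndt--Chan argument for $\phi_{-}(q)$ the analogous reduction produces $(-1)^{n-1}$, and the switch of parity in the sign traces back to a subtle asymmetry between the $n$- and $m$-summations in the underlying Bailey pair (or Appell--Lerch representation) for $\psi(q)$ versus that of $\phi_{-}(q)$. Once this sign is correctly identified, matching the two sides is routine.
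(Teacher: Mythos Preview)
Your proposal identifies the right ingredients --- a starting Hecke--Rogers identity for $\psi(q)$ and the vanishing identities \eqref{eq:trizid}, \eqref{eq:pentid} --- but it is missing the specific mechanism that makes the argument go through, and the step you flag as ``the main obstacle'' is indeed unresolved.

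The paper does \emph{not} multiply an existing Hecke--Rogers sum by $J_1^2/J_2$ and then ``fold'' the result. Instead, starting from the Chen--Wang identity
\[
J_1\,\psi(q)=\sum_{n\ge1}\sum_{r=1}^{n}(-1)^{n-1}q^{2n^2-n-r(r-1)/2}(1-q^{2n})=:A_0,
\]
it introduces the \emph{shifted} family
\[
A_k:=\sum_{n\ge1}\sum_{r=1}^{n}(-1)^{n-1}q^{2n^2-n-(r-k)(r-k-1)/2}(1-q^{2n}),
\]
computes the telescoping difference $A_k+A_{-k}-A_{k-1}-A_{1-k}$ explicitly via \eqref{eq:pentid} (this is where the case split on $k\bmod 3$ and the explicit terms $a_k$ arise), and then evaluates $\sum_{k\in\mathbb Z}(-1)^kA_kq^{k^2}$ in two ways: once via \eqref{eq:trizid}, which kills it outright, and once via the recurrence, which produces $\frac{J_1^2}{J_2}A_0-J_1\sum_{k\ge1}\sum_{r=1}^k(-1)^kq^{k^2}a_r$. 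Equating gives $\frac{J_1^2}{J_2}\psi(q)$ as the latter double sum, and a final change of variables (separating $r$ by residue mod $3$) converts it into the claimed form with sign $(-1)^{m-1}$.

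Your plan, as written, does not locate this shift-and-average device; ``multiply by $J_1^2/J_2$ and eliminate tails'' is not the same operation, and without the $A_k$ family there is no evident way to get the theta factor $J_1^2/J_2$ to interact with the inner summation index so as to produce the new quadratic form $n(3n-1)-2m^2+m$ from $2n^2-n-r(r-1)/2$. The change of sign from $(-1)^{n-1}$ to $(-1)^{m-1}$ that you correctly highlight comes out of the explicit $a_k$ computation, not from any symmetry of the starting sum.
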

\begin{remark}
We note that this Hecke-Rogers series for $\psi(q)$ may be
deduced from a general theorem of Bradley-Thrush \cite{BrTh2020}.
In fact a two-parameter generalization may be obtained from letting
$k=2$, $p=q^6$ and $x=q^{-2}$ in \cite[Theorem 7.3]{BrTh2020}.
Our proof is different.  This method will be used to obtain
a new Hecke-Rogers series for $U(q)$, the generating function for
strongly unimodal sequences. See Lemma \myref{lem:newUHR} below.
\end{remark}

\begin{proof}
From \cite[Eq. (4.37)]{Ch-Wa2020} we have 
\beq
\label{eq:J1psiid}
J_1\psi(q)
=\sum_{n=1}^{\infty}\sum_{r=1}^{n}(-1)^{n-1}q^{2n^2-n-r(r-1)/2}(1-q^{2n}).
\eeq
We note that \myeqref{eq:J1psiid} also follows from 
\cite[Eq(1.3),Eq(1.5)]{Ki-Li-Lo16}.             
We define
$$
A_k:=
\sum_{n=1}^{\infty}\sum_{r=1}^{n}(-1)^{n-1}q^{2n^2-n-(r-k)(r-k-1)/2}(1-q^{2n}),
$$
so that $A_0=J_1\psi(q)$.  Using \myeqref{eq:pentid}
we derive a recurrence relation for $A_k$.
\begin{align*}
&A_k+A_{-k}-(A_{k-1}+A_{1-k})\\
&=\sum_{n=1}^{\infty}
   \sum_{r=1}^{n}(-1)^{n-1}(q^{2n^2-n-(r-k)(r-k-1)/2}
      +q^{2n^2-n-(r+k)(r+k-1)/2})(1-q^{2n})\\
&-\sum_{n=1}^{\infty}
   \sum_{r=1}^{n}(-1)^{n-1}(q^{2n^2-n-(r-k+1)(r-k)/2}
      +q^{2n^2-n-(r+k-1)(r+k-2)/2})(1-q^{2n})\\
&=\sum_{n=1}^{\infty}(-1)^n(q^{2n^2-n-(n-k)(n-k+1)/2}
   -q^{2n^2-n-(n+k)(n+k-1)/2})(1-q^{2n})\\
&=\sum_{n=-\infty}^\infty (-1)^n q^{ n(3n+1)/2 + n(1-k) - k(k-1)/2}
  - \sum_{n=-\infty}^\infty (-1)^n q^{ n(3n+1)/2 + nk - k(k-1)/2}\\
&=-J_1a_k,
\end{align*}
where
\begin{align*}
&a_{3r}:=(-1)^rq^{-6r^2+r},\\
&a_{3r+1}:=(-1)^{r-1}q^{-6r^2-r},\\
&a_{3r+2}:=(-1)^{r-1}(q^{-6r^2-5r-1}+q^{-6r^2-7r-2}).
\end{align*}
Therefore for $k>0$ we have
\beq
\label{iak-k}
A_k+A_{-k}=2A_0-J_1\sum_{r=1}^{k}a_r.
\eeq
By \myeqref{eq:trizid} we find that
\beq
\label{iAk01}
\sum_{k=-\infty}^{\infty}(-1)^k A_k q^{k^2}=
 \sum_{n=1}^{\infty}\sum_{r=1}^{n}(-1)^{n-1}q^{2n^2-n-r(r-1)/2}(1-q^{2n})
 \sum_{k=-\infty}^\infty (-1)^k q^{k(k-1)/2 + rk} = 0.
\eeq
By \myeqref{iak-k}
\beq
\label{iAk02}
\sum_{k=-\infty}^{\infty}(-1)^kA_kq^{k^2}=\frac{J_1^2}{J_2}A_0-J_1\sum_{k=1}^{\infty}\sum_{r=1}^{k}(-1)^kq^{k^2}a_r.
\eeq
By \myeqref{iAk01} and \myeqref{iAk02} we have
\beq
\label{psih}
\frac{J_1^2}{J_2}A_0=J_1\sum_{k=1}^{\infty}\sum_{r=1}^{k}(-1)^kq^{k^2}a_r.
\eeq
Let $G(r,k):=(-1)^kq^{k^2}a_r$ and 
$F(m,n):=\sgtwid(n) (-1)^{m-1}q^{n(3n-1)-2m^2+m}$, 
where $\sgtwid(n)=1$ for $n>0$,
$\sgtwid(0)=0$ and $\sgtwid(n)=-1$ for $n<0$.
It is easy to check that
$$                
G(3s+t,k) = \begin{cases}
F(3s-k,2s-k),& \mbox{if $t=0$,}\\
F(k-3s,k-2s), & \mbox{if $t=1$,}\\
F(3s+2-k,k-2s-1)+F(k-3s-1,2s-k+1) & \mbox{if $t=2$,}
\end{cases}
$$             
assuming $0 < 3s+t \le k$.
We find that
\beq
\label{gf1}
\sum_{k=1}^\infty \sum_{0<3s\leq k}G(3s,k)
=\sum_{k=1}^\infty \sum_{0\leq 3s\leq k}F(3s-k,2s-k)
=\sum_{n=-\infty}^{-1}\sum_{m=1+n}^{0}F(m,n).
\eeq
Similarly we find that
\beq
\label{gf2}
\sum_{k=1}^\infty \sum_{0<3s+1\leq k}G(3s+1,k)
=\sum_{n=1}^{\infty}\sum_{m=1}^{n}F(m,n),
\eeq
and
\beq
\label{gf3}
\sum_{k=1}^\infty \sum_{0<3s+2\leq k}G(3s+2,k)
=\left(\sum_{n=1}^{\infty}\sum_{m=1-n}^{0}
  +\sum_{n=-\infty}^{-1}\sum_{m=1}^{-n}\right)F(m,n).
\eeq
Hence by \myeqref{psih} - \myeqref{gf3}
\begin{align}
\label{psih1}
\frac{J_1^2}{J_2}\psi(q)=&\sum_{k=1}^{\infty}\sum_{r=1}^{k}G(r,k)
=\sum_{n=-\infty}^\infty\sum_{m=1-\abs{n}}^{\abs{n}}F(m,n)\\
\nonumber
=&\sum_{n=1}^{\infty}\sum_{m=1-n}^{n}(-1)^{m-1}q^{n(3n-1)-2m^2+m}(1-q^{2n}).
\end{align}
\end{proof}

We define the following three series
\begin{align}
\label{D0q}
D_0(q):=&\sum_{n=1}^{\infty}d_0(n)q^n:=
\sum_{n=-\infty}^\infty
\sum_{1-\abs{n}\leq m\leq\abs{n}}q^{n(3n-1)-m(2m-1)},\\
\nonumber
D_1(q):=&\sum_{n=1}^{\infty}d_1(n)q^n:=
\sum_{n=-\infty}^\infty
\sum_{1-\abs{n}\leq m\leq\abs{n}}(1-\epsilon(m,n))q^{n(3n-1)-m(2m-1)},\\
\nonumber
D_2(q):=&\sum_{n=1}^{\infty}d_2(n)q^n
:=
\sum_{n=-\infty}^\infty
\sum_{1-\abs{n}\leq m\leq\abs{n}}\epsilon(m,n)q^{n(3n-1)-m(2m-1)},
\end{align}
where $\epsilon(m,n)=1$ if $m\equiv n\pmod2$ and $\epsilon(m,n)=0$ otherwise. 
Clearly $D_0(q)=D_1(q)+D_2(q)$.

\begin{lemma}
\label{lemD1}
We have
$$
\phi_{-}(q)-\psi(q)\equiv 2D_1(q)\pmod4.
$$
\end{lemma}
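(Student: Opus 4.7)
The key observation is that the two Hecke--Rogers identities \eqref{phi-hec} and \eqref{psihec} have \emph{identical} monomial structure: same exponent $q^{n(3n-1)-2m^2+m}$, same factor $(1-q^{2n})$, and same summation range $n\geq 1$, $1-n\leq m\leq n$. They differ only in the sign, which is $(-1)^{n-1}$ for $\phi_-(q)$ and $(-1)^{m-1}$ for $\psi(q)$. Subtracting therefore gives
\begin{equation*}
\frac{J_1^2}{J_2}\bigl(\phi_-(q)-\psi(q)\bigr)
=\sum_{n=1}^{\infty}\sum_{m=1-n}^{n}\bigl[(-1)^{n-1}-(-1)^{m-1}\bigr]q^{n(3n-1)-2m^2+m}(1-q^{2n}).
\end{equation*}
The bracketed coefficient vanishes when $n\equiv m\pmod 2$ and equals $\pm 2$ when $n\not\equiv m\pmod 2$, so the right-hand side equals $2g(q)$ for an explicit integer series $g(q)$.

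Next I would transfer this identity to a mod $4$ congruence on $\phi_-(q)-\psi(q)$ itself. Since $\frac{J_1^2}{J_2}\equiv 1\pmod 2$ by \eqref{eq:thetamod4}, its formal reciprocal $\frac{J_2}{J_1^2}$ also has integer coefficients and satisfies $\frac{J_2}{J_1^2}\equiv 1\pmod 2$. Multiplying through by $\frac{J_2}{J_1^2}$ preserves the factor of $2$, and the general congruence $2a\cdot b\equiv 2(a\bmod 2)(b\bmod 2)\pmod 4$ then yields
\begin{equation*}
\phi_-(q)-\psi(q)\equiv 2g(q)\pmod 4.
\end{equation*}

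It remains to show $g(q)\equiv D_1(q)\pmod 2$. Mod $2$ the $\pm 1$ signs disappear, the factor $1-q^{2n}$ becomes $1+q^{2n}$, and $g(q)$ reduces to the sum over $n\geq 1$ and $m\in[1-n,n]$ with $m\not\equiv n\pmod 2$ of
\begin{equation*}
q^{n(3n-1)-2m^2+m}+q^{n(3n+1)-2m^2+m}.
\end{equation*}
Now compare with the definition \eqref{D0q} of $D_1(q)$: for $n\geq 1$ the exponent $n(3n-1)-2m^2+m$ matches directly, while for $n\leq -1$ the substitution $n\mapsto -n'$ turns $n(3n-1)-2m^2+m$ into $n'(3n'+1)-2m^2+m$, the range $1-|n|\leq m\leq |n|$ into $1-n'\leq m\leq n'$, and the parity condition $1-\epsilon(m,n)=1$ into $m\not\equiv n'\pmod 2$ (since $n$ and $-n$ share parity). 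This exactly matches the second term above, completing the identification.

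The only non-routine points are the clean algebraic form of the $\psi$-identity \eqref{psihec}, which makes the pairing with Berndt--Chan's $\phi_-$-identity possible, and the parity bookkeeping in the last step. The principal obstacle in the argument is not conceptual but organizational: ensuring that when the Hecke--Rogers domain $\{n\geq 1,\; |m|\leq n-\tfrac12\}$ for $g(q)$ is split into the pieces coming from $1$ and from $-q^{2n}$, the second piece corresponds under $n\mapsto -n$ precisely to the $n\leq -1$ portion of the symmetric sum in $D_1(q)$, with the parity constraint surviving unchanged.
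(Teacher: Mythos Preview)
Your proposal is correct and follows essentially the same approach as the paper: subtract the two Hecke--Rogers identities \eqref{phi-hec} and \eqref{psihec}, observe that $(-1)^{n-1}-(-1)^{m-1}\in\{0,\pm2\}$, use $J_2/J_1^2\equiv 1\pmod 2$ to pass to a mod $4$ congruence, and then identify the resulting sum mod $2$ with $D_1(q)$ via the substitution $n\mapsto -n$ that converts the $q^{2n}$-term into the $n<0$ portion of the symmetric sum. The paper's proof compresses these steps into a short chain of congruences, but the content is the same.
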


\begin{proof}
By \myeqref{phi-hec} and \myeqref{psihec}
\begin{align*}
\phi_{-}(q)-\psi(q)=&\frac{J_2}{J_1^2}\sum_{n=1}^{\infty}\sum_{m=1-n}^{n}((-1)^{n-1}-(-1)^{m-1})q^{n(3n-1)-2m^2+m}(1-q^{2n})\\
\equiv &2\sum_{n=1}^{\infty}\sum_{j=1-n}^{n}\frac{(-1)^{n-1}-(-1)^{m-1}}{2}q^{n(3n-1)-2m^2+m}(1+q^{2n})\\
\equiv &2\sum_{n=-\infty}^\infty\sum_{1-\abs{n}\leq m\leq\abs{n}}(1-\epsilon(m,n))q^{n(3n-1)-2m^2+m}\\
\equiv &2D_1(q)\pmod 4.
\end{align*}
\end{proof}

We note that Lemma \myref{lemD1} implies that $N_{\phi_{-}}(n)$ and $N_{\psi}(n)$
have the same parity. This is confirmed by Theorem \myref{the12phi-1}(1) and
Theorem \myref{thepsi1}. The following two lemmas are related to solutions of the
Pell equation
\beq
\label{eq:Pell}
u^2 - 6v^2 = m,
\eeq
where $u$, $v$, $m$ are integers.
Following \cite{An-Dy-Hi88} we say two solutions $(u,v)$ and $(u',v')$ are
\textbf{equivalent} if 
\beq
\label{eq:Pellequiv}
u' + v'\sqrt{6} = \pm(5+2\sqrt{6})^r (u+v\sqrt{6}),
\eeq
for some integer $r$. By \cite[Lemma 3]{An-Dy-Hi88}, if $m>0$, then
each equivalence class of solutions of \myeqref{eq:Pell} contains a unique
$(u,v)$ with $u>0$ and 
\beq
\label{eq:uniquerep}
-\tfrac{1}{3} u < v \le \tfrac{1}{3}u.
\eeq
We define $H_{\sqrt{6}}(m)$ to be the number of inequivalent solutions
to \myeqref{eq:Pell}. By \myeqref{eq:uniquerep} we have
\beq
\label{eq:H6gen}
\sum_{m=1}^\infty H_{\sqrt{6}}(m) q^m = \sum_{u=1}^\infty \sum_{-\tfrac{1}{3} u < v \le \tfrac{1}{3}u} q^{u^2 - 6v^2}.
\eeq
Lovejoy \cite{Lo2004} has calculated $H_{\sqrt{6}}(m)$. 
Wang \cite[Lemma 2.7]{Wa2020} extended this to negative $m$.
\begin{lemma}[{Lovejoy \cite[Theorem 1.3]{Lo2004}}]
\label{lem:lovejoy6}
Let $m$ have prime factorization
$$
m=2^a 3^b \prod_{i=1}^\ell p_i^{e_i} \prod_{j=1}^n q_j^{f_j} \prod_{k=1}^t r_k^{g_k}
$$
where the $p_i\equiv\pm7,\pm11\pmod{24}$,
      the $q_j\equiv1,19\pmod{24}$,
and 
      the $r_k\equiv5,23\pmod{24}$. 
Then 
$$
H_{\sqrt{6}}(m) = 
\begin{cases}
0, & \mbox{if some $e_i$ is odd or $a + \sum g_k$ is odd},\\
\prod_{j=1}^{n}(f_j+1) \prod_{k=1}^{t} (g_k + 1), & \mbox{otherwise.}
\end{cases}
$$
\end{lemma}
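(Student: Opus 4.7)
The plan is to translate $H_{\sqrt 6}(m)$ into a count of principal ideals of $\mathbb Z[\sqrt 6]$. The crucial input is that the fundamental unit $\epsilon = 5 + 2\sqrt 6$ satisfies $N(\epsilon) = 25 - 24 = +1$, so every unit has norm $+1$; consequently $N(\alpha) = u^2 - 6v^2$ is an invariant of the principal ideal $(\alpha)$, and two elements $(u,v), (u',v')$ are equivalent in the sense of \myeqref{eq:Pellequiv} iff they generate the same ideal. Since $\mathbb Q(\sqrt 6)$ has class number one (easily verified via the Minkowski bound), every ideal is principal. Hence $H_{\sqrt 6}(m)$ equals the number of ideals $\mathfrak a \subset \mathbb Z[\sqrt 6]$ of ideal norm $|m|$ whose generator has element norm exactly $m$.

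Next I would catalog primes. The ramified primes $2$ and $3$ give principal primes $\mathfrak p_2 = (2+\sqrt 6)$ of norm $-2$ and $\mathfrak p_3 = (3+\sqrt 6)$ of norm $+3$. By quadratic reciprocity applied to $\leg{6}{p}$, an odd $p \neq 3$ splits iff $p \equiv 1, 5, 19, 23 \pmod{24}$ and is inert otherwise. For split $p$, a generator $\pi$ of $\mathfrak p_p$ has norm $\pm p$, and the sign is forced by reduction mod $3$: $N(\pi) = u^2 - 6v^2 \equiv u^2 \in \{0,1\} \pmod 3$. Therefore $N(\pi) = +p$ when $p \equiv 1 \pmod 3$ (equivalently $p \equiv 1, 19 \pmod{24}$), and $N(\pi) = -p$ when $p \equiv 2 \pmod 3$ (equivalently $p \equiv 5, 23 \pmod{24}$), which exactly matches the dichotomy between the $q_j$ and the $r_k$ in the statement.

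The theorem then follows by unique factorization. Any ideal $\mathfrak a$ of norm $|m|$ factors uniquely, and matching its norm against $m = 2^a 3^b \prod_i p_i^{e_i} \prod_j q_j^{f_j} \prod_k r_k^{g_k}$ forces $a_2 = a$, $a_3 = b$, and $2c_i = e_i$ for each inert $p_i$ (so each $e_i$ must be even), while leaving $0 \le x_j \le f_j$ free in $\mathfrak q_j^{x_j}\bar{\mathfrak q}_j^{f_j-x_j}$ and $0 \le s_k \le g_k$ free in $\mathfrak r_k^{s_k}\bar{\mathfrak r}_k^{g_k-s_k}$. This produces $\prod_j(f_j+1)\prod_k(g_k+1)$ candidate ideals. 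Multiplying the per-prime sign contributions from Step 2 gives $N(\alpha) = (-1)^{a+\sum g_k}\,|m|$, where crucially each $r_k$ factor contributes $(-1)^{g_k}$ independently of $s_k$ because the two conjugate generators both have norm $-r_k$. Demanding $N(\alpha) = +m$ yields the parity condition $a + \sum g_k \equiv 0 \pmod 2$, completing the count.

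The main obstacle is Step 2, namely showing that the sign of $N(\pi)$ for a split prime depends only on the residue class of $p$ mod $24$ (in fact mod $3$). The elegant mod-$3$ argument works here only because $3$ ramifies to an ideal with a positive-norm generator; for the analogous problem over $\mathbb Z[\sqrt d]$ with a generic $d$, one would instead need genus theory or a more refined analysis of the principal genus of binary quadratic forms of discriminant $4d$. A secondary subtlety is verifying that no unit of norm $-1$ exists, i.e., that $\epsilon$ really is the fundamental unit with norm $+1$, since a norm-$(-1)$ unit would collapse the parity condition entirely.
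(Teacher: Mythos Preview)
The paper does not supply its own proof of this lemma; it is quoted verbatim as Lovejoy's Theorem~1.3 and used as a black box. Your argument is therefore not being compared against anything in the paper itself.

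That said, your proof is correct and is essentially the canonical one. The chain of reductions---equivalence classes of solutions $\leftrightarrow$ associate classes of elements $\leftrightarrow$ ideals of $\mathbb Z[\sqrt 6]$, using that $N(5+2\sqrt6)=+1$ and that $\mathbb Q(\sqrt6)$ has class number one---is exactly right, and your splitting law for primes modulo $24$ is accurate. The key step you flag as the ``main obstacle'' (pinning down the sign of $N(\pi)$ for a split prime) is handled cleanly by your reduction mod~$3$: since $u^2-6v^2\equiv u^2\in\{0,1\}\pmod 3$, the sign is forced by $p\bmod 3$, and combining with $p\bmod 8$ recovers the $q_j$/$r_k$ dichotomy. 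The final bookkeeping, in particular that both conjugate generators above an $r_k$ carry norm $-r_k$ so the sign depends only on $g_k$ and not on the exponent split $s_k$, is also correct and gives the parity condition on $a+\sum_k g_k$.

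Two small points worth tightening if you write this up formally: first, you should state explicitly that $5+2\sqrt6$ \emph{is} the fundamental unit (e.g.\ via the continued fraction $\sqrt6=[2;\overline{2,4}]$), not merely a unit of norm $+1$, since otherwise a smaller unit of norm $-1$ could lurk; second, when you verify class number one via the Minkowski bound $\tfrac12\sqrt{24}<3$, you need the single observation that $(2+\sqrt6)^2=2(5+2\sqrt6)$, so the prime above $2$ is principal. With those two lines added, the argument is complete.
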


\begin{cor}
\label{corpell1}
Let $24n-1$ have the prime factorization
$$
24n-1=\prod_{i=1}^{k}p_i^{a_i}\prod_{j=1}^{l}q_j^{b_j}
$$
where $p_i\equiv 1,5,19,23\pmod{24}$ and $q_j\equiv 7,11,13,17\pmod{24}$. If 
some $b_j$ is odd then $d_0(n)$=0, otherwise we have
\beq
\label{6y2}
d_0(n)=\frac{1}{2}\prod_{i=1}^{k}(a_i+1).
\eeq
\end{cor}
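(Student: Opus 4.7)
The plan is to reduce $d_0(n)$ to an inequivalent-representation count for the norm form $U^2 - 6V^2$ on $\mathbb{Z}[\sqrt{6}]$ and then invoke Lovejoy's Lemma \ref{lem:lovejoy6}. First, substitute $u = 6n' - 1$ and $v = 4m - 1$; a direct calculation yields $24(n'(3n'-1) - m(2m-1)) - 1 = 2(6n'-1)^2 - 3(4m-1)^2$, so the equation $n'(3n'-1) - m(2m-1) = n$ becomes $2u^2 - 3v^2 = 24n - 1$, subject to the congruences $u \equiv -1 \pmod 6$, $v \equiv -1 \pmod 4$, together with an explicit linear range inequality on $(u,v)$ translated from $1 - |n'| \le m \le |n'|$. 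Multiplying by $2$ and setting $U = 2u$, $V = v$ produces the Pell-type equation $U^2 - 6V^2 = 48n-2$ with $U \equiv 10 \pmod{12}$ and $V \equiv 3 \pmod 4$.

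Next I aim to establish the identity $d_0(n) = \tfrac{1}{2} H_{\sqrt 6}(48n - 2)$. The fundamental unit $\varepsilon = 5 + 2\sqrt 6$ of $\mathbb{Z}[\sqrt 6]$ has norm $+1$, and the action of $\varepsilon^2$, namely $(U,V) \mapsto (49U+120V,\, 20U+49V)$, preserves both $U \pmod{12}$ and $V \pmod 4$. The map $\tau(\alpha) = -\varepsilon \bar\alpha$, which in coordinates reads $(U,V) \mapsto (-5U+12V,\, -2U+5V)$, is a norm-preserving involution that also preserves these congruences and satisfies $\tau \varepsilon^2 \tau = \varepsilon^{-2}$. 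One checks that the range inequalities cut out a fundamental domain for the $\langle \varepsilon^2, \tau\rangle$-action on solutions with the required congruences, so that $d_0(n)$ equals the number of $\pm\varepsilon^{\mathbb{Z}}$-orbits of solutions that contain such a representative. Complex conjugation $\alpha \mapsto \bar\alpha$ is then an involution on $\pm\varepsilon^{\mathbb{Z}}$-orbits with no fixed orbit, because $24n-1 \equiv -1 \pmod{24}$ is not a square modulo $24$. This involution swaps contributing and non-contributing orbits pairwise, so exactly half of the $H_{\sqrt 6}(48n-2)$ orbits contribute.

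Finally, applying Lemma \ref{lem:lovejoy6} to $48n-2 = 2\prod p_i^{a_i} \prod q_j^{b_j}$: the primes $q_j \equiv 7, 11, 13, 17 \pmod{24}$ are inert in $\mathbb{Z}[\sqrt 6]$ and force $H_{\sqrt 6}(48n-2) = 0$ if any $b_j$ is odd, giving the first claim of the corollary. When all $b_j$ are even, a short parity computation modulo $24$ using $24n - 1 \equiv -1 \pmod{24}$ shows that Lovejoy's sign condition on the exponents of primes $\equiv 5, 23 \pmod{24}$ is automatically satisfied, so $H_{\sqrt 6}(48n-2) = \prod (a_i+1)$, yielding $d_0(n) = \tfrac{1}{2}\prod(a_i+1)$. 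The main obstacle is the fundamental-domain verification in the second step: one must show that the range inequalities select exactly one solution in each of precisely half the $\pm\varepsilon^{\mathbb{Z}}$-orbits, which requires explicit case analysis over the four congruence classes $(U \pmod{12}, V \pmod 4) \in \{(2,1),(2,3),(10,1),(10,3)\}$ that appear within every orbit, combined with the $\varepsilon^2$-translation structure along the hyperbola.
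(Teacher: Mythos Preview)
Your overall strategy --- reduce to the Pell form $U^2-6V^2=48n-2$, identify $d_0(n)=\tfrac12 H_{\sqrt6}(48n-2)$, and apply Lovejoy's Lemma~\ref{lem:lovejoy6} together with a parity check mod~$24$ --- is exactly the paper's strategy. But your orbit argument for the factor $\tfrac12$ is mis-stated, and the paper's execution is considerably simpler.

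The specific error: you claim conjugation ``swaps contributing and non-contributing orbits pairwise.'' In fact \emph{every} $\pm\varepsilon^{\mathbb Z}$-orbit contains elements of all four congruence classes $(U\bmod 12,\,V\bmod 4)\in\{(2,1),(2,3),(10,1),(10,3)\}$: multiplication by $\varepsilon$ sends $U\mapsto 5U\pmod{12}$ (swapping $2\leftrightarrow 10$) and fixes $V\bmod 4$, while $-1$ swaps $(2,1)\leftrightarrow(10,3)$ and $(2,3)\leftrightarrow(10,1)$. So there are no ``non-contributing'' orbits. What is true, and what would rescue your argument, is that the $(10,3)$-solutions inside each $\pm\varepsilon^{\mathbb Z}$-orbit $O$ form a single $\varepsilon^2$-orbit, and $\tau$ carries these to the $(10,3)$-solutions of the conjugate orbit $\bar O\ne O$; hence $\langle\varepsilon^2,\tau\rangle$-orbits of $(10,3)$-solutions biject with conjugate pairs $\{O,\bar O\}$, giving $\tfrac12 H_{\sqrt6}(48n-2)$. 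You would still owe the fundamental-domain verification you flag as the main obstacle.

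The paper bypasses all of this. It uses the ready-made fundamental domain $u>0$, $-u/3<v\le u/3$ from \eqref{eq:uniquerep} (due to Andrews--Dyson--Hickerson) and computes directly: the congruence $u^2-6v^2\equiv -2\pmod{48}$ forces $u\equiv\pm2\pmod{12}$ and $v$ odd; writing $u=12n\mp 2$ and $v=2m-1$ and then pairing $v\leftrightarrow -v$ (which is fixed-point-free since $v$ is odd) immediately gives $H_{\sqrt6}(48n-2)=2d_0(n)$ via \eqref{eq:H6d0id}. The two cases $u=12n\mp2$ correspond to the $n'>0$ and $n'<0$ parts of $D_0$. No $\tau$, no $\varepsilon^2$, no congruence-class case analysis. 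Your conjugation insight is exactly this $v\mapsto -v$ symmetry, but the paper reads it off the explicit fundamental domain rather than building a new group action around it.
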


\begin{proof}
We note $u^2 - 6v^2 \equiv -2 \pmod{48}$ if and only if $u\equiv\pm2\pmod{12}$
and $v$ is odd. Thus by \myeqref{eq:H6gen} we find that
\begin{align}
\label{eq:H6d0id}
\sum_{n=1}^\infty H_{\sqrt{6}}(48n-2) 
&=\sum_{n=1}^\infty \left(\sum_{-2n<m\leq 2n}q^{(12n-2)^2-6(2m-1)^2}
     +\sum_{-2n<m\leq 2n}q^{(12n+2)^2-6(2m-1)^2}\right)
\\
&=2\sum_{n=1}^\infty\left(
    \sum_{-n<m\leq n}q^{(12n-2)^2-6(4m-1)^2}
   +\sum_{-n<m\leq n}q^{(12n+2)^2-6(4m-1)^2}\right)
\nonumber\\
&=2 \sum_{n=1}^{\infty}d_0(n)q^{48n-2}.
\nonumber
\end{align}

If some $b_j$ is odd then by Lemma \myref{lem:lovejoy6} 
$H_{\sqrt{6}}(48n-2)=0$
and $d_0(n)=0$. We assume all the $b_j$ are even and note that  
$p^2\equiv1\pmod{24}$ for $(p,6)=1$. Next we show that the number of primes congruent to
$5$ or $23$ (counted with multiplicity) in the factorization of $48n-2$
is odd. If it is even then the product of these primes is either $1$
or $5\cdot23\equiv19\pmod{24}$. But this would imply that $24n-1$
is either $1$ or $19\pmod{24}$, which is a contradiction. The result
follows from \myeqref{eq:H6d0id} and Lemma \myref{lem:lovejoy6}.
\end{proof}

We also need Andrews, Dyson and Hickerson's \cite{An-Dy-Hi88} results for
$$
\sum_{n=1}^\infty S^{*}(n) q^n = \sum_{n=1}^\infty \frac{(-1)^nq^{n^2}}
                                                        {(q;q^2)_n}.
$$

\begin{lemma}
\label{lempell2}
For $n \ge 1$ let $s(n):=d_1(n)-d_2(n)$ and
$$
24n-1=\prod_{i=1}^{k}p_i^{a_i}\prod_{j=1}^{l}q_j^{b_j}
$$
be the prime factorization of $24n-1$, where the $p_i\equiv \pm1\pmod{24}$, 
and the $q_j\equiv \pm5,\pm7,\pm11\pmod{24}$.
If some $b_j\equiv 1\pmod2$ then $s(n)=0$, otherwise we have
$$
\abs{s(n)}=\frac{1}{2}\prod_{i=1}^{k}(a_i+1).
$$
\end{lemma}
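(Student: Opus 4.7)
The plan is to identify $s(n) = d_1(n)-d_2(n)$ (up to a sign) with the Andrews--Dyson--Hickerson coefficient $S^*(n)$, and then invoke ADH's classical structure theorem for $S^*(n)$. First, from the definitions in \eqref{D0q}, and since $1 - 2\epsilon(m,n') = -(-1)^{n'+m}$, one reads off
$$
s(n) = d_1(n) - d_2(n) = \sum_{(n',m)\in \mathcal{R}(n)} (-1)^{n'+m+1},
$$
where $\mathcal{R}(n) = \{(n',m)\in\mathbb{Z}^2 : n'(3n'-1) - m(2m-1) = n,\ 1-|n'| \le m \le |n'|\}$.

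Next, I would perform the linear substitution $u = 6n'-1$, $v = 4m-1$. A direct calculation gives the algebraic identity
$$
24\bigl(n'(3n'-1) - m(2m-1)\bigr) = 2u^2 - 3v^2 + 1,
$$
so that $\mathcal{R}(n)$ parametrises integer solutions of the Pell-type equation $2u^2 - 3v^2 = 24n - 1$ with $u \equiv 5 \pmod 6$ and $v \equiv 3 \pmod 4$. The box condition $1 - |n'| \le m \le |n'|$ becomes an explicit linear wedge in the $(u,v)$-plane which, I claim, is a fundamental domain for the action of the fundamental unit $5 + 2\sqrt{6}$ of $\mathbb{Z}[\sqrt{6}]$ on the solution set. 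Hence $s(n)$ becomes a signed count of orbits of representations of $24n - 1$ by the indefinite form $2U^2 - 3V^2$.

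Then I would compare with Andrews, Dyson and Hickerson's Hecke--Rogers identity for $S^*(q) = \sum_{n\ge 1}\tfrac{(-1)^n q^{n^2}}{(q;q^2)_n}$ from \cite{An-Dy-Hi88}, which expresses $\sum_n S^*(n) q^{24n-1}$ as precisely the same type of signed sum over the same Pell equation restricted to the same residue classes. After matching regions and signs (a short mod-$24$ case check), this gives $s(n) = \pm S^*(n)$. The structural conclusion of the lemma is then ADH's theorem for $|S^*(n)|$: it vanishes whenever some $q_j \equiv \pm 5, \pm 7, \pm 11 \pmod{24}$ appears to odd order in $24n - 1$, and otherwise equals $\tfrac{1}{2}\prod(a_i + 1)$, where the $a_i$ are the exponents of the primes $p_i \equiv \pm 1 \pmod{24}$.

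The main obstacle is the matching step: verifying that the box $1 - |n'| \le m \le |n'|$ transforms exactly into a fundamental domain for the unit action (rather than, say, a half-fundamental domain or a translate), and that the sign $(-1)^{n'+m+1}$ descends to ADH's sign convention. This is elementary but fiddly bookkeeping: $n' \bmod 2$ is determined by $u \bmod 12$, $m \bmod 2$ by $v \bmod 8$, and one checks directly that the unit action $(u,v) \mapsto (5u+6v,\,2u+5v)$ moves any in-box point out of the box, so the orbits of solutions are counted with multiplicity one.
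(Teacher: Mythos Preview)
Your overall strategy---show $|s(n)|=|S^{*}(n)|$ and then invoke the Andrews--Dyson--Hickerson structure theorem---is exactly the paper's. But your execution takes an unnecessary detour. The paper never passes to the Pell equation or checks any fundamental domain to obtain the identification: it simply quotes the Hecke--Rogers identity \cite[Eq.~(5.2)]{An-Dy-Hi88} for $\sum_{n\ge1} S^{*}(n)\,q^n$, reindexes the inner $j$-sum via the bijection $\{0,1,\dots,2n'-1\}\to\{1-n',\dots,n'\}$ under which $j(j+1)/2=m(2m-1)$, and rewrites the result as the bilateral sum $\sum_{n'\in\mathbb{Z}}\sum_{m=1-|n'|}^{|n'|}(-1)^{n'}q^{n'(3n'-1)-m(2m-1)}$. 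Replacing $q$ by $-q$ (using that $n'(3n'-1)$ is always even while $m(2m-1)\equiv m\pmod 2$) converts the sign $(-1)^{n'}$ into $(-1)^{n'+m}$ and gives $s(n)=(-1)^{n+1}S^{*}(n)$ directly---no wedge in the $(u,v)$-plane, no unit orbits, no mod-$24$ sign bookkeeping. The formula for $|S^{*}(n)|$ then follows from \cite[Theorems~3 and~5]{An-Dy-Hi88}, just as you say. In other words, your ``main obstacle'' evaporates once you compare the two double sums term by term instead of passing through their Pell-equation interpretations.

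A small slip in your outline: the map $(u,v)\mapsto(5u+6v,\,2u+5v)$ is not an automorphism of $2u^2-3v^2$ (nor of $u^2-6v^2$). The automorphism of $2u^2-3v^2$ coming from the unit $5+2\sqrt{6}$ is $(u,v)\mapsto(5u+6v,\,4u+5v)$, while that of $u^2-6v^2$ is $(u,v)\mapsto(5u+12v,\,2u+5v)$. This does not break your plan, but it underlines why the direct series comparison is the cleaner route.
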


\begin{proof}
By \cite[Eq.(5.2)]{An-Dy-Hi88} we find that
\begin{align*}
&\sum_{n=1}^\infty S^{*}(n) q^n = 
\sum_{n=1}^\infty \sum_{j=0}^{2n-1} (-1)^n q^{n(3n-1)-j(j+1)/2}(1+q^n)\\
&=\sum_{n=1}^\infty \sum_{m=-n+1}^{n} (-1)^n q^{n(3n-1)-m(2m-1)}(1+q^n)
=\sum_{n=-\infty}^\infty \sum_{m=-\abs{n}+1}^{\abs{n}} (-1)^n q^{n(3n-1)-m(2m-1)}.
\end{align*}
After replacing $q$ by $-q$ we have
$$
\sum_{n=1}^\infty (-1)^{n+1} S^{*}(n) q^n =
\sum_{n=-\infty}^\infty \sum_{m=-\abs{n}+1}^{\abs{n}} (-1)^{m+n+1}  
q^{n(3n-1)-m(2m-1)} = \sum_{n=1}^\infty s(n) q^n.
$$            
Hence
$$
\abs{s(n)} = \abs{S^{*}(n)}.
$$
By \cite[Theorem 5]{An-Dy-Hi88} we have
$$
S^{*}(n) = \tfrac{1}{2} T(1-24n),
$$
where $T(m)$ is the excess of the number of solutions of \myeqref{eq:Pell}
satisfying $u+3v\equiv\pm1\pmod{12}$ over the number satisfying
$u+3v\equiv\pm5\pmod{12}$.                                     
The result then follows from \cite[Theorem 3]{An-Dy-Hi88} which 
gives a formula for $T(m)$ in terms of the prime factorisation of $m$ when
$m\equiv 1\pmod{6}$.
\end{proof}

We can determine the difference $D_1(q)$ by Lemma \myref{corpell1} and 
Lemma \myref{lempell2}. First we consider the
case when is square-free. 

\begin{lemma}
\label{lempsisqf}
Let $n$ be an integer such that $N_\psi(n)$ is even and $24n-1$ is square-free. Then $d_1(n)$ is odd if and only if
$$
24n-1=p_1p_2,
$$
where $p_1$ and $p_2$ are primes 
for which $\{p_1,p_2\}\equiv \{5,19\}\pmod{24}$.
\end{lemma}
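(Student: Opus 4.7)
The plan is to combine the explicit evaluations of $d_0(n)$ and $|s(n)|$ provided by Corollary \ref{corpell1} and Lemma \ref{lempell2} with the identity
$$d_1(n) = \tfrac{1}{2}\bigl(d_0(n) + s(n)\bigr),$$
which is immediate from $D_0 = D_1 + D_2$ together with the definition $s(n) := d_1(n) - d_2(n)$. First I would invoke Theorem \ref{thepsi1}: the hypothesis that $N_\psi(n)$ is even, combined with square-freeness of $24n-1$, forces $24n-1$ to have at least two distinct prime factors, since $24n-1 = p^{4a+1}m^2$ can be square-free only when $a=0$ and $m=1$, i.e.\ when $24n-1$ is prime.

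Next, write $24n-1 = p_1 \cdots p_r$ with $r \ge 2$ distinct primes, and sort the $p_i$ by residue class in $(\mathbb{Z}/24\mathbb{Z})^\times$. The two cited results partition these eight residues into P-primes and Q-primes differently: Corollary \ref{corpell1} puts $\{1,5,19,23\}$ into P and $\{7,11,13,17\}$ into Q, while Lemma \ref{lempell2} puts $\{1,23\}$ into P and $\{5,7,11,13,17,19\}$ into Q. Because $24n-1$ is square-free, a single Q-prime (for the respective statement) suffices to collapse the corresponding quantity to $0$; otherwise the nonzero value, or its absolute value, is $\tfrac12 \cdot 2^{(\text{number of P-primes})}$.

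The main case split is then three-fold. (i) If some $p_i \equiv 7,11,13,17 \pmod{24}$, then $d_0(n)=s(n)=0$ and hence $d_1(n) = 0$. (ii) If no such prime occurs but some $p_i \equiv 5,19 \pmod{24}$, then $d_0(n) = 2^{r-1}$ while $s(n) = 0$, giving $d_1(n) = 2^{r-2}$, which is odd exactly when $r = 2$. (iii) If every $p_i \equiv \pm 1 \pmod{24}$, then $|s(n)| = d_0(n) = 2^{r-1}$, so $d_1(n) \in \{0, 2^{r-1}\}$, which is even for every $r \ge 2$.

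It then remains, in case (ii) with $r = 2$, to enforce the global constraint $p_1 p_2 = 24n-1 \equiv 23 \pmod{24}$ subject to $p_1,p_2 \in \{1,5,19,23\} \pmod{24}$ with at least one in $\{5,19\}$. A short enumeration of the possible products ($5\cdot 1 \equiv 5$, $5\cdot 5 \equiv 1$, $5\cdot 19 \equiv 23$, $5\cdot 23 \equiv 19$, $19\cdot 1 \equiv 19$, $19\cdot 19 \equiv 1$, $19\cdot 23 \equiv 5$) shows that only $\{p_1,p_2\} \equiv \{5,19\} \pmod{24}$ survives, which is exactly the claimed characterization. The main obstacle is purely bookkeeping, namely keeping straight how the two residue-class partitions interact through the sum $d_0(n) + s(n)$; no further analytic input beyond the two preceding lemmas is needed.
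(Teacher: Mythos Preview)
Your proof is correct and follows essentially the same approach as the paper: both use the identity $2d_1(n)=d_0(n)+s(n)$ together with Corollary~\ref{corpell1} and Lemma~\ref{lempell2}, and then a short case analysis on the residues of the prime factors of $24n-1$ modulo $24$. The only difference is organizational---the paper splits first on whether the number of prime factors exceeds two (using $2^{k-1}\equiv0\pmod4$ for $k\ge3$) and then, for exactly two primes, runs through the four residue pairs forced by $p_1\equiv -p_2\pmod{24}$---whereas you split directly by which residue classes occur; both routes arrive at the same computation.
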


\begin{proof}
Suppose $\Np(n)$ is even where $24n-1$ is a square-free positive integer.
Let
$$
24n-1=\prod_{i=1}^{k}p_i
$$
be the prime factorization of $24n-1$. 
If $k=1$ then Theorem \myref{thepsi1} implies that $N_\psi(n)$ is odd which is
a contradiction. Hence either $k=2$ or $k>2$. If $k>2$ then Lemmas
\myref{corpell1} and \myref{lempell2} imply that $d_0(n)\equiv s(n)\equiv 0\pmod4$ 
and    
$$
2d_1(n)=d_0(n)+s(n)\equiv 0\pmod4
\qquad\mbox{and}\qquad 
d_1\equiv 0 \pmod{2}.
$$

Now suppose $k=2$, so that $24n-1$ is a product of two primes $p_1$ and $p_2$.
We note that $p_1\equiv -p_2\pmod{24}$. There are three cases.
\subsection*{Case 1: $\{p_1,p_2\}\equiv \{1,23\}\pmod{24}$.} Lemmas
\myref{corpell1} and \myref{lempell2} imply that
$$
2d_1(n)=d_0(n)+s(n)=2 \pm 2 \equiv 0 \pmod{4},\qquad\mbox{and}\qquad 
d_1\equiv 0 \pmod{2}.
$$

\subsection*{Case 2: $\{p_1,p_2\}\equiv \{5,19\}\pmod{24}$.} Lemmas
\myref{corpell1} and \myref{lempell2} imply that
$$
2d_1(n)=d_0(n)+s(n)=2 + 0 = 2,\qquad\mbox{and}\qquad d_1=1.
$$

\subsection*{Case 3: $\{p_1,p_2\}\equiv \{7,17\}\pmod{24}$.} Lemmas
\myref{corpell1} and \myref{lempell2} imply that
$$
2d_1(n)=d_0(n)+s(n)=0 + 0 = 0,\qquad\mbox{and}\qquad d_1=0.
$$

\subsection*{Case 4: $\{p_1,p_2\}\equiv \{11,13\}\pmod{24}$.} Lemmas
\myref{corpell1} and \myref{lempell2} imply that
$$
2d_1(n)=d_0(n)+s(n)=0 + 0 = 0,\qquad\mbox{and}\qquad d_1=0.
$$

Hence we see that $d_1(n)$ is odd if and only if 
$\{p_1,p_2\}\equiv \{5,19\}\pmod{24}$.
\end{proof}

To extend Lemma \myref{lempsisqf} to the case when $24n-1$ is not square-free
we need an analog of Lemma \myref{theHid}. Fortunately we have
the following result.
\begin{theorem}[{\cite[Theorem 1.3(i)]{Ga13}}]
\label{thm:heckespt}
If $\ell\ge 5$ is prime then 
\beq
\spt(\ell^2 n - s_\ell)
+ \leg{3}{\ell}\, \leg{1-24n}{\ell}\, \spt(n)
+ \ell \,\spt\left( \frac{n + s_\ell}{\ell^2} \right)
\equiv
 \leg{3}{\ell} \, (1 + \ell) \, \spt(n) \pmod{72},
\mylabel{eq:sptmod72}
\eeq
where $s_{\ell} = \tfrac{1}{24}(\ell^2-1)$.
\end{theorem}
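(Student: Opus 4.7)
The plan is to exploit Bringmann's theorem \cite{Br2008} that, after suitable normalization, the $\spt$ generating function is the holomorphic part of a weight $3/2$ harmonic Maass form, and then invoke the standard action of the Hecke operator $T_{\ell^2}$ on such objects. Concretely, set
$$
\alpha(z) := \sum_{n\ge 0} \spt(n)\, q^{24n-1} \qquad (q = e^{2\pi i z}),
$$
with the convention $\spt(0) = -\tfrac{1}{12}$. Bringmann's result supplies a non-holomorphic completion $\widehat\alpha$ of $\alpha$ that is a harmonic Maass form of weight $3/2$ on $\Gamma_0(576)$ with a suitable nebentypus $\chi$, whose shadow is a unary theta series supported on exponents $\equiv 1 \pmod{24}$.

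First I would record the action of $T_{\ell^2}$ on Fourier coefficients of a weight $3/2$ (mock) modular form. For $\ell \ge 5$ prime coprime to the level, the standard formula gives, on a coefficient $b(m)$ at exponent $m$,
$$
b(\ell^2 m) \;+\; \chi(\ell)\leg{-m}{\ell}\, b(m) \;+\; \chi(\ell)^2 \, \ell \, b(m/\ell^2).
$$
Specialising to $m = 24n - 1$, the character contributes $\chi(\ell) = \leg{3}{\ell}$ and $\leg{-m}{\ell} = \leg{1-24n}{\ell}$, so the left-hand side of \myeqref{eq:sptmod72} is, up to the non-holomorphic error coming from the shadow, exactly the Hecke image of $\alpha$; the right-hand side then encodes the eigenvalue relation $T_{\ell^2}\alpha \equiv \leg{3}{\ell}(1+\ell)\alpha$ that would hold if $\alpha$ were a genuine Hecke eigenform.

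The main work is therefore to control the shadow contribution modulo $72$. I would split $72 = 8 \cdot 9$ and handle each factor separately, recombining by CRT. Modulo $8$, the congruence $\spt(n) \equiv (-1)^{n-1} \Np(n) \pmod 4$ of \myeqref{eq:sptPsimod4}, sharpened to a mod $8$ version using the Hecke--Rogers expansion \myeqref{psihec} and the theta-identities \myeqref{eq:thetamod4}, reduces the problem to a Hecke relation for $\psi(q)$ that falls out of the weight $3/2$ theory applied to $\psi$ itself. Modulo $9$, one uses the classical decomposition $\spt(n) = n\,p(n) - \tfrac{1}{2} N_2(n)$, in which both $p(n)$ and the second rank moment $N_2(n)$ satisfy explicit Hecke-type relations (the former from Atkin's mod $9$ congruence for $p$, the latter because $\sum N_2(n)q^n /(q;q)_\infty$ is also the holomorphic part of a weight $3/2$ harmonic Maass form).

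The anticipated main obstacle is bookkeeping: tracking the characters and the exact level on which $\widehat\alpha$ transforms, and verifying that the shadow contributions really do vanish modulo $8$ and modulo $9$. Once the identity is reformulated as equality of two weight $3/2$ holomorphic modular forms modulo each prime power on an explicit congruence subgroup, Sturm's bound reduces the verification to checking finitely many Fourier coefficients, which can be carried out using the \texttt{ETA} package cited earlier.
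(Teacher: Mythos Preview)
This theorem is not proved in the present paper at all: it is quoted verbatim from \cite[Theorem 1.3(i)]{Ga13} and then used (via Corollary \ref{cor:heckepsimod4} and Lemma \ref{lem:Nptwid}) as input to the main arguments. So there is no proof here to compare your proposal against; the comparison you would want is with the argument in \cite{Ga13}, where the congruence is obtained as the mod~$72$ shadow of a much stronger exact Hecke-eigenform relation for a weight~$3/2$ form built from the generating function of $\spt(n)$ together with an Eisenstein-type correction.

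On the substance of your sketch: the overall shape (harmonic Maass form of weight $3/2$, action of $T_{\ell^2}$, control of the shadow) is the right framework, and the coefficient formula you write down is correct. But two of the steps you rely on are not justified and would not go through as stated. First, the claim that \eqref{eq:sptPsimod4} can be ``sharpened to a mod $8$ version'' using \eqref{psihec} and \eqref{eq:thetamod4} is not supported by anything in this paper; the relation $\spt(n)\equiv(-1)^{n-1}N_\psi(n)\pmod 4$ is genuinely a mod $4$ statement, and upgrading it to mod $8$ would require new input, not just the Hecke--Rogers series. Second, for the mod $9$ part, writing $\spt(n)=np(n)-\tfrac12 N_2(n)$ is fine, but asserting that $N_2(n)$ ``satisfies explicit Hecke-type relations'' mod $9$ is exactly the hard content of \cite{Ga13}; you cannot invoke it without essentially reproducing that argument. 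Finally, the Sturm-bound step at the end presupposes that after subtracting the shadow you are left with a \emph{holomorphic} form of known level and weight with integral coefficients --- establishing that is again the substance of the proof in \cite{Ga13}, not a routine verification. In short, your outline names the right objects but defers the actual work to assertions that are equivalent in difficulty to the theorem itself.
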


By \myeqref{eq:sptPsimod4} we have the following corollary.
\begin{cor}
\label{cor:heckepsimod4}
If $\ell\ge 5$ is prime then 
\begin{align}
&\Np(\ell^2 n - s_\ell)
+ (-1)^{s_\ell}\,\leg{3}{\ell}\, \leg{1-24n}{\ell}\, \Np(n)
+ \ell \,\Np\left( \frac{n + s_\ell}{\ell^2} \right)
\mylabel{eq:Npsimod4}
\\
& \qquad \equiv
(-1)^{s_\ell}\, \leg{3}{\ell} \, (1 + \ell) \, \Np(n) \pmod{4}.
\nonumber
\end{align}
\end{cor}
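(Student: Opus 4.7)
The plan is to reduce the $\spt$-congruence of Theorem \myref{thm:heckespt} modulo $4$ (which is permitted since $4\mid 72$) and then convert each $\spt$-value to an $\Np$-value via the congruence \eqref{eq:sptPsimod4}. Since the substitution $\spt(m)\equiv (-1)^{m-1}\Np(m)\pmod 4$ introduces a sign depending on the parity of its argument, the whole proof reduces to careful sign bookkeeping.

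The key parity facts I would record first are the following. Because $\ell$ is odd, $\ell^2\equiv 1\pmod 2$, so $\ell^2 n-s_\ell\equiv n+s_\ell\pmod 2$; and whenever $\ell^2\mid n+s_\ell$, we also have $(n+s_\ell)/\ell^2\equiv n+s_\ell\pmod 2$. Hence applying \eqref{eq:sptPsimod4} attaches the common sign $(-1)^{n+s_\ell+1}$ to both $\Np(\ell^2 n-s_\ell)$ and $\Np((n+s_\ell)/\ell^2)$, while it attaches the sign $(-1)^{n+1}$ to the two occurrences of $\Np(n)$ on the left- and right-hand sides.

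Finally, I would multiply the resulting mod-$4$ identity through by $(-1)^{n+s_\ell+1}$. The first and third terms then shed their signs entirely, while each $\Np(n)$-term picks up the net factor $(-1)^{n+s_\ell+1}\cdot (-1)^{n+1}=(-1)^{s_\ell}$, which is precisely the factor $(-1)^{s_\ell}$ appearing in \eqref{eq:Npsimod4}. Collecting the terms gives exactly the claimed congruence.

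There is no genuine obstacle: the argument is a single direct substitution combined with the parity computation above. The only minor care required is the convention that both $\spt$ and $\Np$ vanish on non-integral or negative arguments, so the $\Np((n+s_\ell)/\ell^2)$ term is handled consistently on both sides when $\ell^2\nmid n+s_\ell$, matching the analogous convention under which Theorem \myref{thm:heckespt} is stated.
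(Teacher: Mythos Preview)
Your proposal is correct and follows exactly the same approach as the paper, which simply states that the corollary follows from \eqref{eq:sptPsimod4}. You have merely made explicit the sign bookkeeping that the paper leaves to the reader; the parity observations you record (that $\ell^2 n - s_\ell$, $(n+s_\ell)/\ell^2$, and $n+s_\ell$ all have the same parity, while $n$ itself differs by $s_\ell$) are precisely what is needed to pass from \eqref{eq:sptmod72} to \eqref{eq:Npsimod4}.
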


Following \cite[p.353]{At1968} we use the standard practice of rewriting 
an arithmetic
function in terms of $24n-1$. We write
$$
\Nptwid(m) = 
\begin{cases}
\Np(n), & \mbox{if $m=24n-1$},\\
0,      & \mbox{if $m<23$ or $m\not\equiv -1 \pmod{24}$ or $m$ is 
                non-integral}.  
\end{cases}
$$

We rewrite Corollary \myref{cor:heckepsimod4} in terms $\Nptwid$ and
derive some congruence properties.
\begin{lemma}
\label{lem:Nptwid}
Let $\ell \ge 5$ be prime. Then
\begin{enumerate}
\item[(i)]
\beq               
\mylabel{eq:Nptwidmod4}
\Nptwid(\ell^2n)
+ (-1)^{s_\ell}\,\leg{-3n}{\ell}\, \Nptwid(n)
+ \ell \,\Nptwid\left( \frac{n}{\ell^2} \right)
\equiv
(-1)^{s_\ell}\, \leg{3}{\ell} \, (1 + \ell) \, \Nptwid(n) \pmod{4}.
\eeq                 
\item[(ii)]
If $(\ell,n)=1$ and $\Nptwid(n)$ is even then
   \begin{enumerate}
    \item[(a)] $\displaystyle \Nptwid(\ell^2 n)\equiv \Nptwid(n) \pmod{4}$,
    \item[(b)] $\displaystyle \Nptwid(\ell^3 n)\equiv 0\pmod{4}$.
   \end{enumerate}
\item[(iii)]
$$
\Nptwid(\ell^4n) \equiv \Nptwid(n) \pmod{4}.
$$
\end{enumerate}
\end{lemma}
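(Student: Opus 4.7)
The plan is to prove (i) by a direct substitution into Corollary \ref{cor:heckepsimod4}, and then derive (ii) and (iii) by specializing (i) to particular arguments, using the parity characterization of Theorem \ref{thepsi1} to handle the subtle cases.

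For (i), the idea is to interpret Corollary \ref{cor:heckepsimod4} under the change of variable $m = 24n-1$. A direct computation gives $24(\ell^2 n - s_\ell) - 1 = \ell^2 m$, $24(n+s_\ell)/\ell^2 - 1 = m/\ell^2$, and $\leg{3}{\ell}\leg{1-24n}{\ell} = \leg{-3m}{\ell}$ (since $\leg{3}{\ell}^2 = 1$ for $\ell \ne 3$). Renaming $m$ as $n$ then delivers (i).

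For (ii)(a), applying (i) with $(\ell,n)=1$ kills the term $\Nptwid(n/\ell^2)$, and since both $1+\ell$ and $\Nptwid(n)$ are even the right side vanishes modulo $4$, giving $\Nptwid(\ell^2 n) \equiv \pm\Nptwid(n) \equiv \Nptwid(n) \pmod{4}$ (the last step uses $2\Nptwid(n) \equiv 0 \pmod{4}$). For (ii)(b), I would apply (i) with $n$ replaced by $\ell n$: the middle Kronecker symbol vanishes as $\ell \mid \ell n$, and $\Nptwid(n/\ell) = 0$ since $(\ell,n)=1$, leaving $\Nptwid(\ell^3 n) \equiv (-1)^{s_\ell}\leg{3}{\ell}(1+\ell)\Nptwid(\ell n) \pmod{4}$. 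When $\ell \equiv 3\pmod{4}$ the factor $1+\ell$ already kills the right side modulo $4$. When $\ell \equiv 1\pmod{4}$, I would argue that $\Nptwid(\ell n)$ is even: by Theorem \ref{thepsi1}, oddness of $\Nptwid(\ell n)$ would force $\ell n = p^{4a+1} m^2$, and coprimality $(\ell,n)=1$ forces $\ell = p$ with $n$ a perfect square coprime to $\ell$; but nonvanishing of $\Nptwid(\ell n)$ requires $\ell n \equiv -1\pmod{24}$, i.e., $n \equiv -\ell\pmod{24}$, and for $\ell \equiv 1\pmod{4}$ (equivalently $\ell\pmod{24} \in \{1,5,13,17\}$) this places $n$ modulo $24$ in $\{7,11,19,23\}$, none of which are quadratic residues mod $24$.

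For (iii), the plan is to apply (i) with $n$ replaced by $\ell^2 n$. The middle Kronecker symbol vanishes, leaving $\Nptwid(\ell^4 n) + \ell\Nptwid(n) \equiv (-1)^{s_\ell}\leg{3}{\ell}(1+\ell)\Nptwid(\ell^2 n) \pmod{4}$. For $\ell \equiv 3\pmod{4}$, the congruences $1+\ell \equiv 0\pmod{4}$ and $-\ell \equiv 1\pmod{4}$ deliver (iii) immediately. The main obstacle is $\ell \equiv 1\pmod{4}$: here $(1+\ell) \equiv 2\pmod{4}$ reduces the congruence to $\Nptwid(\ell^4 n) \equiv 3\Nptwid(n) + 2\Nptwid(\ell^2 n) \pmod{4}$, making (iii) equivalent to the parity identity $\Nptwid(n) \equiv \Nptwid(\ell^2 n) \pmod{2}$. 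To establish this identity I would write $n = \ell^k n_0$ with $(\ell,n_0) = 1$ and compare $\ell^k n_0$ with $\ell^{k+2} n_0$ via Theorem \ref{thepsi1}. When $k$ is even, the form $N = p^{4a+1} m'^2$ depends only on the factorization of $n_0$, so the two parities coincide. When $k$ is odd, oddness of either quantity would require $n_0$ to be a perfect square, but nonvanishing forces $\ell^k n_0 \equiv -1\pmod{24}$, i.e., $n_0 \equiv -\ell\pmod{24} \in \{7,11,19,23\}$, which contains no squares mod $24$; hence both values are even.
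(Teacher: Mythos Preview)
Your proof is correct and follows essentially the same approach as the paper: part (i) is a direct rewriting of Corollary~\ref{cor:heckepsimod4}, and parts (ii) and (iii) are obtained by specializing (i) and invoking the parity characterization of Theorem~\ref{thepsi1}. The only differences are organizational: for (ii)(b) the paper argues directly that oddness of $\Nptwid(\ell n)$ forces $\ell\equiv 23\pmod{24}$ (hence $1+\ell\equiv 0\pmod 4$), and for (iii) the paper phrases the $\ell\equiv 1\pmod 4$ case as two subcases on the parity of $\Nptwid(\ell^2 n)$ rather than your $\ell$-adic valuation argument, but the underlying use of Theorem~\ref{thepsi1} is the same.
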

\begin{proof}
(i). The congruence \myeqref{eq:Nptwidmod4} follows immediately from 
\myeqref{eq:Npsimod4}.

(ii). Suppose $(n,\ell)=1$ and $\Nptwid(n)$ is even. Since $\ell$ is odd we
have $(1+\ell)\Nptwid(n)\equiv0\pmod{4}$, and $\Nptwid(n/\ell^s)=0$ 
since
$(n,\ell)=1$. The congruence \myeqref{eq:Nptwidmod4} implies that
$$
\Nptwid(\ell^2 n)\equiv \Nptwid(n) \pmod{4}.
$$
Replacing $n$ by $\ell n$ in \myeqref{eq:Nptwidmod4} gives
$$
\Nptwid(\ell^3n)
\equiv
(-1)^{s_\ell}\, \leg{3}{\ell} \, (1 + \ell) \, \Nptwid(\ell n) \pmod{4}.
$$
Either $\Nptwid(\ell n)$ is even or $\Nptwid(\ell n)$ is odd and 
$\ell\equiv -1 \pmod{24}$. In both cases we have 
$(1 + \ell) \, \Nptwid(\ell n) \equiv 0\pmod{4}$
and 
$$
\Nptwid(\ell^3n)\equiv 0 \pmod{4}.
$$

(iii).  Replacing $n$ by $\ell^2 n$ in \myeqref{eq:Npsimod4} we have
\beq           
\Nptwid(\ell^4n)
\equiv
-\ell \Nptwid(n) +
(-1)^{s_\ell}\, \leg{3}{\ell} \, (1 + \ell) \, \Nptwid(\ell^2 n) \pmod{4}.
\label{eq:Npsimod4b}
\eeq
There are three cases.

\textbf{Case 1:} {$\ell\equiv 3\pmod{4}$.} 
From \myeqref{eq:Npsimod4b} we
have   
$$             
\Nptwid(\ell^4n) \equiv  \Nptwid(n) \pmod{4},
$$  
since $-\ell\equiv 1\pmod{4}$ and $(1+\ell)\equiv 0 \pmod{4}$.

\textbf{Case 2:} {$\ell\equiv 1\pmod{4}$ and $\Nptwid(\ell^2 n)$ is odd.} 
Since $\ell\not\equiv23\pmod{24}$ Theorem \myref{thepsi1} implies that
$\Nptwid(n)$ is also odd, and from \myref{eq:Npsimod4b} we have 
\begin{align*}
\Nptwid(\ell^4n)
&\equiv
\Nptwid(n) - (1+\ell)\left(\Nptwid(n) - 
(-1)^{s_\ell}\, \leg{3}{\ell} \Nptwid(\ell^2 n) \right) \pmod{4}\\
&\equiv 
\Nptwid(n) \pmod{4}.                              
\end{align*}

\textbf{Case 3:} {$\ell\equiv 1\pmod{4}$ and $\Nptwid(\ell^2 n)$ is even.} 
Again since $\ell\not\equiv23\pmod{24}$ Theorem \myref{thepsi1} implies that
$\Nptwid(n)$ is also even and from \myref{eq:Npsimod4b} we see that
$$
\Nptwid(\ell^4n) \equiv \Nptwid(n) \pmod{4}.                              
$$

This completes the proof of (iii) in all cases.
\end{proof}

\begin{theorem}
\label{thepsi2}
For $n>0$ be an integer, $N_{\psi}(n)\equiv 2\pmod4$ if and only if $24n-1$ has the form
$$
24n-1=p_1^{4a+1}p_2^{4b+1}m^2,
$$
where $p_1$ and $p_2$ are primes such that 
$\displaystyle\leg{p_1}{p_2}=-\varepsilon(p_2)$ for $\varepsilon(p)=-1$ 
if $p\equiv \pm5\pmod{24}$ and $\varepsilon(p)=1$ otherwise, 
$(m,p_1p_2)=1$ and $a,b\geq 0$ are integers.
\end{theorem}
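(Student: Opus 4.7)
The plan is to read off $N_\psi(n)\pmod 4$ from the corresponding congruence for $N_{\phi_-}(n)$, corrected by the $d_1$-term coming from Lemma \myref{lemD1}, and then to lift the square-free base case to the general statement using the Hecke-style mod $4$ identities in Lemma \myref{lem:Nptwid}.

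Comparing coefficients in Lemma \myref{lemD1} gives
$$
N_\psi(n)\equiv N_{\phi_-}(n)-2d_1(n)\pmod 4.
$$
Since $N_\psi(n)$ and $N_{\phi_-}(n)$ always share the same parity (Theorems \myref{thepsi1} and \myref{the12phi-1}(1)), $N_\psi(n)\equiv 2\pmod 4$ iff $N_{\phi_-}(n)$ is even and exactly one of ``$N_{\phi_-}(n)\equiv 2\pmod 4$'' and ``$d_1(n)$ is odd'' holds. In the square-free base case $24n-1=p_1p_2$, Theorem \myref{the12phi-1}(2) characterizes when $N_{\phi_-}(n)\equiv 2\pmod 4$ (namely $\leg{p_1}{p_2}=-1$), and Lemma \myref{lempsisqf} characterizes when $d_1(n)$ is odd (namely $\{p_1,p_2\}\equiv\{5,19\}\pmod{24}$). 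Running through the four possible residue classes for $\{p_1,p_2\}\pmod{24}$ consistent with $p_1p_2\equiv-1\pmod{24}$, one sees that the combined condition is cleanly packaged as $\leg{p_1}{p_2}=-\varepsilon(p_2)$: the $d_1$-correction flips the $\phi_-$-Legendre condition precisely in the $\{5,19\}$ class, which is exactly where $\varepsilon\equiv-1$.

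To reduce the general case to this base, I would apply Lemma \myref{lem:Nptwid} iteratively. Writing $24n-1=p_1^{4a+1}p_2^{4b+1}m^2$ with $\gcd(m,p_1p_2)=1$, strip $(p_1^4)^a$ and $(p_2^4)^b$ by repeated use of part (iii), which is unconditional. For each prime $q\mid m$ of multiplicity $c$ in $m$, the factor $q^{2c}$ either has exponent divisible by $4$ (fully eliminated by iterated part (iii)) or leaves a residual $q^2$; each such residual $q^2$ can then be removed via part (ii)(a), whose hypotheses hold at every stage because $q$ is coprime to the running index and the running $\Nptwid$-value stays even by Theorem \myref{thepsi1} (two distinct odd-exponent primes $p_1,p_2$ persist, preventing the running index from taking the form $p^{4a+1}m^2$). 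The outcome is $\Nptwid(24n-1)\equiv\Nptwid(p_1p_2)\pmod 4$, at which point the square-free analysis above concludes.

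The main obstacle will be the bookkeeping: verifying that the Hecke-style reductions stay in the even regime at every intermediate step so that Lemma \myref{lem:Nptwid}(ii)(a) is legitimately applicable, and that the four square-free cases line up to yield the single uniform condition $\leg{p_1}{p_2}=-\varepsilon(p_2)$. A small consistency point worth noting is that the constraint $p_1p_2\equiv-1\pmod{24}$ forces $\varepsilon(p_1)=\varepsilon(p_2)$, so the stated condition is in fact symmetric in $p_1$ and $p_2$, as required by the symmetric form of the factorization $p_1^{4a+1}p_2^{4b+1}m^2$.
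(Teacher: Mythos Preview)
Your approach is the same as the paper's: handle the square-free case via Lemma \myref{lemD1}, Theorem \myref{the12phi-1}(2), and Lemma \myref{lempsisqf}, then lift via Lemma \myref{lem:Nptwid}. The square-free analysis is fine (and your ``exactly one of'' framing correctly disposes of the case where square-free $24n-1$ has more than two prime factors, since then both $N_{\phi_-}(n)\equiv 0\pmod4$ and $d_1(n)$ is even).

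The genuine gap is that your general-case reduction is written only in one direction. By opening with ``writing $24n-1=p_1^{4a+1}p_2^{4b+1}m^2$'' you are already assuming the target shape and then proving $N_\psi(n)\equiv 2\pmod4$. For the converse you must start from an \emph{arbitrary} factorization $24n-1=\prod_i p_i^{e_i}$, reduce every exponent modulo $4$ via part (iii), and then eliminate the possibility that some reduced exponent equals $3$. That last step is exactly what Lemma \myref{lem:Nptwid}(ii)(b) is for, and you never invoke it: if after the mod-$4$ reduction one has $M'=p_j^{3}\cdot(\text{rest})$ with $(p_j,\text{rest})=1$, then (ii)(b) forces $\Nptwid(M')\equiv 0\pmod4$, contradicting $\Nptwid(M')\equiv 2$. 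Only after ruling out the exponent-$3$ case can you legitimately strip the remaining squares via (ii)(a) and arrive at a square-free index, where your base case shows it must be a product of exactly two primes satisfying $\leg{p_1}{p_2}=-\varepsilon(p_2)$; this in turn forces the original $24n-1$ into the form $p_1^{4a+1}p_2^{4b+1}m^2$. The paper carries out precisely this argument. Your evenness bookkeeping for (ii)(a) is actually more explicit than the paper's, so once you supply the (ii)(b) step and restructure the reduction to start from a general factorization, the proof is complete.
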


\begin{proof}
    Suppose $n>0$. First we prove the result when $24n-1$ is square-free.
By Lemma \myref{lemD1} 
\beq
\Npm(n) - \Np(n) \equiv 2d_1(n) \pmod{4}.
\label{eq:Npmpmod4}
\eeq
We assume $24n-1$ is square-free and $\Np(n)\equiv 2\pmod{4}$. 
The congruence \myeqref{eq:Npmpmod4} implies that $\Npm(n)\equiv0$ or $2\pmod{4}$.
\vskip 0pt\noindent\textbf{Case 1:} {$\Npm(n)\equiv0\pmod{4}$.} Therefore
$d_1(n)$ is odd and Lemma \myref{lempsisqf} implies that $24n-1=p_1 p_2$ for 
primes $p_1$, $p_2$ satisfying $\{p_1,p_2\}\equiv \{5,19\}\pmod{24}$. Theorem
\myref{the12phi-1} implies that $\leg{p_1}{p_2}=1=-\varepsilon(p_2)$.
\vskip 0pt\noindent\textbf{Case 2:} {$\Npm(n)\equiv2\pmod{4}$.} By Theorem
\myref{the12phi-1} there are primes $p_1$, $p_2$ satisfying
$\leg{p_1}{p_2}=-1$ and $24n-1=p_1p_2$. By \myeqref{eq:Npmpmod4} $d_1(n)$ is
even and Lemma \myref{lempsisqf} implies that $p_1$, 
$p_2\not\equiv\pm5\pmod{24}$, so that $\leg{p_1}{p_2}
=-\varepsilon(p_2)$. 

Similarly we can show the converse that if $24n-1=p_1p_2$
where $p_1$ and $p_2$ are primes satisfying  $\leg{p_1}{p_2}
=-\varepsilon(p_2)$, then $\Np(n)\equiv 2\pmod{4}$. This completes
the proof of the result when $24n-1$ is square-free.

We now consider the general case. For this part we first assume that
$\Np(n)\equiv 2\pmod{4}$. We let $M=24n-1$ so that $\Nptwid(M)=\Np(n)$ and
$\Nptwid(M)\equiv 2\pmod{4}$. We write the prime factorization of $M$ as
$$
M  = \prod_{i=1}^k p_i^{4a_i+r_i},
$$
where the $a_i$, $r_i$ are integers satisfying $0 \le r_i \le 3$.
Then by Lemma \myref{lem:Nptwid}(iii) we have
$$
\Nptwid(M) \equiv \Nptwid(M') \equiv 2 \pmod{4},
$$
where
$$
M'  = \prod_{i=1}^k p_i^{r_i} \equiv -1\pmod{24}.
$$
Lemma \myref{lem:Nptwid}(ii)(b) implies that none of the $r_i$ are equal
to $3$.  
Lemma \myref{lem:Nptwid}(ii)(a) implies that                                 
$$
\Nptwid(M) \equiv \Nptwid(M'') \equiv 2 \pmod{4},
$$
where
$$
M''  = \prod_{j \in J} p_i \equiv -1\pmod{24},
$$
and $J$ is the set of $j$ for which $r_j=1$. Here $M''$ is square-free.
Therefore  $J$ is a set of two primes. Without loss of generality 
we may assume these two primes are $p_1$ and $p_2$ where $\leg{p_1}{p_2}
=-\varepsilon(p_2)$. Hence
$$
24n - 1 = M = p_1^{4a_1+1} p_2^{4a_2+1} m^2,
$$
where $(m,p_1p_2)=1$ and $24n-1$ has the desired form.

Finally we assume 
$$
M=24n-1=p_1^{4a+1}p_2^{4b+1}m^2,
$$
where $p_1$ and $p_2$ are primes such that 
$\displaystyle\leg{p_1}{p_2}=-\varepsilon(p_2)$.
Arguing as before and using Lemma \myref{lem:Nptwid}(ii),(iii) we have
$$
\Np(n) = \Nptwid(M) \equiv \Nptwid(p_1 p_2) \pmod{4},
$$
where $M \equiv p_1 p_2 \equiv -1 \pmod{24}$.
Since $p_1 p_2$ is square-free, satisfies $\leg{p_1}{p_2}=-\varepsilon(p_2)$
and we have proved the square-free case $\Nptwid(p_1 p_2)\equiv 2 \pmod{4}$,
the result
$$
\Np(n) \equiv 2 \pmod{4}
$$
follows.
\end{proof}

We are now ready to complete the proof of Conjecture \myref{sptconj}
and related congruences for $\psi(q)$. 
\begin{theorem}
\label{thepsic}
Let $p>3$ be a prime where $p\not\equiv23\pmod{24}$. Suppose 
$24\delta_p\equiv1\pmod{p^2}$, $k,n\in \mathbb{Z}$ and 
$\displaystyle\leg{k}{p}=\varepsilon(p)$ where $\varepsilon(p)=-1$ if 
$p\equiv \pm5\pmod{24}$ and $\varepsilon(p)=1$ otherwise. Then
\begin{align}
\label{Npid}
N_{\psi}(p^2n+(pk+1)\delta_p)&\equiv 0\pmod4,\\
\label{sptid}
\mathrm{spt}(p^2n+(pk+1)\delta_p)&\equiv 0\pmod4.
\end{align}
\end{theorem}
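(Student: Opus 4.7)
The plan is to follow the template of the proofs of Theorems \ref{thmV1mod4}, \ref{thm:Nphimmod4}, and \ref{thm:Nsigmamod4}, now feeding in the sharper characterizations Theorem \ref{thepsi1} (parity of $N_\psi$) and Theorem \ref{thepsi2} (mod $4$ behavior of $N_\psi$). I set $m := p^2 n + (pk+1)\delta_p$; from $24\delta_p \equiv 1 \pmod{p^2}$ we get
\[
24m - 1 \equiv pk \pmod{p^2},
\]
and because $\leg{k}{p} = \pm 1$ forces $(k,p) = 1$, this gives $p \,\|\, 24m-1$. The whole argument will hinge on this single divisibility.

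The first step is to rule out $N_\psi(m)$ being odd. If it were, Theorem \ref{thepsi1} would write $24m - 1 = q^{4a+1}t^2$ with $(t,q)=1$, and $p\,\|\,24m-1$ would force $q=p$ and $a=0$, giving $24m - 1 = pt^2$. A parity check shows $t$ is odd, and $pt^2 \equiv -1 \pmod 3$ gives $3 \nmid t$; hence $\gcd(t,6)=1$, $t^2 \equiv 1\pmod{24}$, and $p \equiv 23 \pmod{24}$, contradicting the hypothesis.

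Assume next, for contradiction, that $N_\psi(m) \equiv 2 \pmod 4$. By Theorem \ref{thepsi2},
\[
24m - 1 = p_1^{4a+1}\, p_2^{4b+1}\, t^2, \qquad \leg{p_1}{p_2} = -\varepsilon(p_2),\ (t, p_1 p_2) = 1.
\]
The condition $p\,\|\,24m-1$ forces $p = p_1$, $a = 0$; writing $q := p_2$ and reducing modulo $p$ yields $k \equiv q^{4b+1} t^2 \pmod p$, so
\[
\leg{q}{p} = \leg{k}{p} = \varepsilon(p),
\]
which combines with $\leg{p}{q} = -\varepsilon(q)$ to give $\leg{p}{q}\leg{q}{p} = -\varepsilon(p)\varepsilon(q)$. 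Since $pq \equiv -1 \pmod 4$ makes exactly one of $p, q$ lie in $1\pmod 4$, quadratic reciprocity yields $\leg{p}{q}\leg{q}{p} = +1$, so we would need $\varepsilon(p)\varepsilon(q) = -1$.

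The final step is a routine case check: $pq \equiv -1 \pmod{24}$ pairs each admissible residue $p \in \{1,5,7,11,13,17,19\} \pmod{24}$ with a unique complementary $q$-residue, and in every one of the seven resulting pairs one sees $\varepsilon(p) = \varepsilon(q)$, so $\varepsilon(p)\varepsilon(q) = +1$. This contradicts the previous paragraph, hence $N_\psi(m) \equiv 0 \pmod 4$, which is \eqref{Npid}. The spt congruence \eqref{sptid} then follows immediately from \myeqref{eq:sptPsimod4}. The genuine obstacle in this argument is not the quadratic reciprocity bookkeeping but Theorem \ref{thepsi2} itself, whose proof requires the new Hecke--Rogers series \eqref{psihec} for $\psi(q)$, the Pell-equation counts of Lemmas \ref{corpell1} and \ref{lempell2}, and the mod $4$ Hecke operator identity of Lemma \ref{lem:Nptwid}; once that input is in place, the present theorem reduces to the short case analysis above.
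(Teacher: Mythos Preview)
Your proof is correct and follows essentially the same line as the paper's: set $m$, deduce $p\,\|\,24m-1$, use Theorem~\ref{thepsi1} to exclude $N_\psi(m)$ odd, then assume $N_\psi(m)\equiv 2\pmod 4$, invoke Theorem~\ref{thepsi2}, and derive a contradiction via quadratic reciprocity. The only difference is cosmetic: you make explicit the verification that $\varepsilon(p)=\varepsilon(q)$ whenever $pq\equiv -1\pmod{24}$ (your seven-case check), whereas the paper uses this fact silently when it writes $\leg{k}{p}=\varepsilon(q)$; similarly, you spell out the reduction to $p\equiv 23\pmod{24}$ in the parity step, while the paper just says ``$N_\psi(m)$ is even since $24m-1$ and $p\not\equiv 23\pmod{24}$''.
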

\begin{remark}
We have rewritten the statement of Conjecture \myref{sptconj} in 
an equivalent form. The equivalence follows from the observations that
$\leg{6}{p}=1$ for $p\equiv 1,\pm5\pmod{24}$ and 
$\leg{6}{p}=-1$ for $p\equiv\pm7,\pm11\pmod{24}$. We note that a weak version
of the mod $4$  congruences for $\Np(n)$ were conjectured by Bryson, Ono 
Pitman and Rhoades \cite[Eq.(1.7), Conjecture 1.6]{Br-On-Pi-Rh12}.
We discuss this further in Section \myref{subsec:stronguni}.
\end{remark}

\begin{proof}
Assume $p>3$ is prime, $p\not\equiv23\pmod{24}$, $24\delta_p\equiv1\pmod{p^2}$, 
$k,n\in \mathbb{Z}$ and 
$\displaystyle\leg{k}{p}=\varepsilon(p)$. 
We let $m=p^2n+(pk+1)\delta_p$, so that
\beq
\label{p24m-1}
24m-1\equiv pk\pmod{p^2}.
\eeq
By Theorem \myref{thepsi1} we see that $N_{\psi}(m)$ is even since
$24m-1$ and  $p\not\equiv23\pmod{24}$. Now suppose that 
$\Np(m)\equiv 2\pmod{4}$. By Theorem \myref{thepsi2} 
$N_{\psi}(m)\equiv 2\pmod4$ if and only if $24m-1$ has the form
$$
24m-1=p^1 q^{4b+1}t^2,
$$
where $q$ is a prime satisfying $\displaystyle\leg{p}{q}=-\varepsilon(q)$ 
$(pq,t)=1$, and $b$, $t>0$ are integers. Either $p$ or $q\equiv1\pmod{4}$
so by quadratic reciprocity $\leg{q}{p}=\leg{p}{q}=-\varepsilon(q)$.
From \myeqref{p24m-1} we have
$$
k \equiv q^{4b+1}t^2\pmod{p}.
$$
But this implies that
$$
\leg{q}{p} = \leg{k}{p} = \varepsilon(q),
$$
which is a contradiction. 
Hence $\Np(m)\not\equiv 2\pmod4$ and
$$
N_{\psi}(m)\equiv 0\pmod4,
$$
which is \myeqref{Npid}. Finally the result \myeqref{sptid} holds by 
\myeqref{eq:sptPsimod4}.
\end{proof}

\subsection{Strongly unimodal sequences}
\label{subsec:stronguni}

In this section we prove  Bryson, Ono, Pitman and Rhoades's Conjecture
\myref{conju}. This conjecture has three parts. In this section 
we will prove the first part, congruence \myeqref{eq:conjuA}. The third
part, congruence \myeqref{eq:conjuC}, follows from Theorem \myref{thepsic}.
The second part, congruence \myeqref{eq:conjuB}, will follow from the
first and third parts. As noted in Section \myref{sec:intro}
we have
\beq
\mathcal{U}(\pm i;q) = \psi(q) = 
\sum_{n=1}^\infty N_\psi(n) q^n =
\sum_{n=1}^\infty \frac{q^{n^2}}{(q;q^2)_n}.
\label{eq:UpsiidB}
\eeq
See \cite[p.16064]{Br-On-Pi-Rh12}. Also define $U(q)$ by
\beq
U(q) := \mathcal{U}(1;q) = \sum_{n=1}^\infty u(n) q^n.
\label{eq:Uqdef}
\eeq
Letting $z=i$ in \myeqref{eq:Uzqdef} we have
\beq
u(0,4;n) - u(2,4,n) = \Np(n),\quad\mbox{and}\quad u(1,4;n)=u(3,4;n).
\label{eq:u024psi}
\eeq
Hence
\beq
u(n) = u(0,4;n) + 2u(1,4;n) + u(2,4;n),
\label{eq:un0124}
\eeq
and
\beq
u(n) + \Np(n) = 2(u(0,4;n)+u(1,4;n)).
\label{eq:unpsi}
\eeq
From \myeqref{eq:u024psi} it is clear that \myeqref{eq:conjuC} 
follows from Theorem \myref{thepsic}. Equations 
\myeqref{eq:u024psi}--\myeqref{eq:unpsi} together with Theorem \myref{thepsic}
show how \myeqref{eq:conjuB} follows from \myeqref{eq:conjuA} and
\myeqref{eq:conjuC}. Here we assume 
that $\ell\equiv 7,11,13,17\pmod{24}$ is prime and
$\displaystyle\leg{k}{\ell}=-1$. 

We point out that there is a stronger result for Bryson, Ono, Pitman's
Conjecture \myeqref{eq:conjuC} that includes primes 
$\ell\equiv 1,\pm5\pmod{24}$.
This follows easily from \myeqref{Npid} and \myeqref{eq:u024psi}.
We state this as a separate theorem.
\begin{theorem}
\label{conjuCstrong}
Suppose $\ell>$ is prime and $\ell\not\equiv 23\pmod{24}$. Then
for $n\ge0$ we have
\beq                
u(0,4; \ell^2n+kl-s(\ell))\equiv 
u(2,4; \ell^2n+kl-s(\ell))\pmod4,
\label{eq:conjuCb}
\eeq          
provided $\displaystyle\leg{k}{\ell}=\eptwid(\ell)$ and 
$s(\ell)=\tfrac{1}{24}(\ell^2-1)$.
\end{theorem}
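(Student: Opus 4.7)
The plan is a direct reduction to the $N_\psi$ congruence \myeqref{Npid}. By \myeqref{eq:u024psi} the two rank-class counts differ by $N_\psi$,
$$u(0,4;n) - u(2,4;n) = \Np(n),$$
so the asserted congruence mod $4$ is equivalent to proving
$$\Np\bigl(\ell^2 n + k\ell - s(\ell)\bigr) \equiv 0 \pmod 4$$
whenever $\ell > 3$ is prime with $\ell \not\equiv 23 \pmod{24}$ and $\leg{k}{\ell} = \eptwid(\ell)$. Theorem \myref{thepsic} does exactly this job, but with a slightly different parametrisation of the residue class mod $\ell^2$, so the work is to line up the two parametrisations.

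Set $M = \ell^2 n + k\ell - s(\ell)$. Using $24\, s(\ell) = \ell^2 - 1$ one finds
$$24 M - 1 = \ell\bigl(24\ell n + 24 k - \ell\bigr),$$
so $(24M-1)/\ell \equiv 24 k \pmod \ell$. On the other hand, if one writes $M = \ell^2 n'' + (\ell k' + 1)\delta_\ell$, where $24\delta_\ell \equiv 1 \pmod{\ell^2}$ as in Theorem \myref{thepsic}, then the same calculation gives $(24M-1)/\ell \equiv k' \pmod\ell$. Consequently, for any integer $k'$ with $k' \equiv 24 k \pmod \ell$ the difference $M - (\ell k' + 1)\delta_\ell$ is forced to be divisible by $\ell^2$, so one can solve for an integer $n''$ and write $M$ in precisely the shape required by Theorem \myref{thepsic}.

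It remains to check that the Legendre-symbol hypotheses agree. A short case-check against the values of $\leg{6}{\ell}$ recorded in the remark following Theorem \myref{thepsic} shows
$$\varepsilon(\ell) = \leg{6}{\ell}\,\eptwid(\ell) \qquad\text{for all primes } \ell > 3 \text{ with } \ell \not\equiv 23 \pmod{24}.$$
Combined with $\leg{24}{\ell} = \leg{6}{\ell}$ (since $\leg{4}{\ell} = 1$), this gives
$$\leg{k'}{\ell} = \leg{24 k}{\ell} = \leg{6}{\ell}\leg{k}{\ell} = \leg{6}{\ell}\,\eptwid(\ell) = \varepsilon(\ell),$$
which is exactly the hypothesis needed to invoke \myeqref{Npid}. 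Theorem \myref{thepsic} then yields $\Np(M) \equiv 0 \pmod 4$, and via \myeqref{eq:u024psi} this is the asserted congruence. The entire argument is bookkeeping on top of Theorem \myref{thepsic}; there is no substantive obstacle, only the verification (done above) that the two conventions for the Jacobi-symbol hypothesis are compatible on each residue class $\ell \pmod{24}$ outside $\ell \equiv 23$.
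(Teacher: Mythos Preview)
Your proof is correct and follows exactly the route the paper indicates: reduce to $\Np(M)\equiv 0\pmod 4$ via \myeqref{eq:u024psi}, then invoke Theorem \myref{thepsic} after matching the two parametrisations of the residue class mod $\ell^2$ and checking $\leg{24k}{\ell}=\varepsilon(\ell)$. The paper itself only says ``this follows easily from \myeqref{Npid} and \myeqref{eq:u024psi}''; your write-up simply fills in the bookkeeping (including the case-check $\varepsilon(\ell)=\leg{6}{\ell}\,\eptwid(\ell)$) that the paper leaves implicit.
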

\begin{remark}
Here $\eptwid(\ell)$ is defined in the statement of 
Conjecture \myref{sptconj}; i.e.\ 
$\eptwid(\ell) =  1$ if $\ell\equiv 1\pmod{24}$ and $-1$ otherwise. 
For example let $\ell=457\equiv 1 \pmod{24}$, and $k=21$. 
Then $\leg{k}{\ell}=\leg{21}{457}=1 =\eptwid(\ell)$, $s(\ell)=8702$, and
$k\ell -s(\ell)=895$. We have
\begin{align*}
u(0,4;895) &= 256203223294825619203431487908 \equiv 0 \pmod{4}\\
u(1,4;895)=u(3,4,895)&= 256203223294825426775345978961 \equiv 1\pmod{4}\\
u(2,4;895)&=256203223294825234347260470016 \equiv 0 \pmod{4}\\
u(895) &= 1024812893179301707101383915846\equiv 2\pmod{4}.
\end{align*}
\end{remark}

Hikami and Lovejoy \cite[Theorem 4.1]{Hi-Lo2015} found a Hecke-Rogers
identity for the generating function $\mathcal{U}(z;q)$;
\beq
\label{eq:Uzq}
(1+z)\mathcal{U}(z;q)=\frac{q}{(q;q)_\infty}
\left(\sum_{r,n\geq0}-\sum_{r,n<0}\right)(-1)^nz^{-r}q^{n(3n+5)/2+2nr+r(r+3)/2}.
\eeq
See also \cite[Eq.(1.3)]{Ki-Li-Lo16}.
Considering Lemma \myref{lem:psiHR} and that $\psi(q)=\mathcal{U}(i;q)$ it
is reasonable to suspect that $\mathcal{U}(1;q)$ has a similar
Hecke-Rogers identity. 

\begin{lemma}
\label{lem:altUHR}
We have
\beq
\label{eq:altuhec}
J_1\,U(q)
=\sum_{n=1}^{\infty}\sum_{r=1}^{n}(-1)^{r-1}q^{2n^2-n-r(r-1)/2}(1+q^{2n}).
\eeq
\end{lemma}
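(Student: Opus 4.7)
The plan is to derive \myeqref{eq:altuhec} by a direct manipulation of Hikami--Lovejoy's identity \myeqref{eq:Uzq} at $z=1$, bypassing the iterated $A_k$ machinery used for $\psi(q)$ in Lemma \myref{lem:psiHR}. Setting $z=1$ in \myeqref{eq:Uzq} and multiplying through by $J_1$ yields
\begin{equation*}
2J_1 U(q) = q\left(\sum_{r,n\ge 0} - \sum_{r,n<0}\right)(-1)^n q^{n(3n+5)/2 + 2nr + r(r+3)/2}.
\end{equation*}
Under the substitution $(n,r)\mapsto(-n-1,-r-1)$, a bijection between the two index quadrants, the exponent is invariant while $(-1)^n$ flips sign, so the two summations are negatives of each other and combine to twice the positive summation. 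Reindexing $n\mapsto n-1$ to absorb the leading $q$ then gives
\begin{equation*}
J_1 U(q) = \sum_{n\ge 1,\,r\ge 0} (-1)^{n-1} q^{n(3n-1)/2 + 2nr + r(r-1)/2}.
\end{equation*}

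The substantive step is a parity split of the inner $r$-sum. Writing $r=2s$ and $r=2s+1$ for $s\ge 0$ and comparing exponents, I expect to find that they differ by exactly $2(n+s)$. Since the sign $(-1)^{n-1}$ depends only on $n$, the two parity classes at fixed $(n,s)$ combine to
\begin{equation*}
J_1 U(q) = \sum_{n\ge 1,\,s\ge 0} (-1)^{n-1} q^{n(3n-1)/2 + 4ns + 2s^2 - s}\bigl(1 + q^{2(n+s)}\bigr).
\end{equation*}
Spotting this pairing is essentially the whole trick, since it is the only place the $(1+q^{2n})$ factor of \myeqref{eq:altuhec} is manufactured.

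The final step is the change of variables $M := n+s$ with $1\le n\le M$, $s=M-n$. A routine expansion should collapse the quadratic exponent to $M(2M-1) - n(n-1)/2$, after which relabeling $M\to n$ and $n\to r$ produces \myeqref{eq:altuhec} verbatim. The main obstacle is bookkeeping accuracy in this final substitution; no triple-product vanishings or $A_k$-recurrences are required here. As a fallback, if the parity pairing failed to yield a clean $(1+q^{2n})$ factor, I would mimic the proof of Lemma \myref{lem:psiHR}: choose a suitable seed form of $J_1 U(q)$, define shifts $B_k$ by translating $r\mapsto r-k$, derive a recurrence from the triple-product vanishing $\sum_k (-1)^k B_k q^{k^2} = 0$, and solve for $B_0 = J_1 U(q)$.
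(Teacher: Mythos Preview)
Your proposal is correct and follows essentially the same approach as the paper: both set $z=1$ in the Hikami--Lovejoy identity \myeqref{eq:Uzq}, use the involution $(n,r)\mapsto(-n-1,-r-1)$ to reduce to a single quadrant, and then manufacture the factor $(1+q^{2n})$ via a parity split and a linear change of variables. The only difference is the order of operations: the paper performs the two-variable substitution $(n,r)\mapsto(n-1,r-2n+1)$ first (yielding the exponent $r(r+1)/2-n(n-1)/2$), then swaps the summation order and splits $r$ into even/odd; you instead shift $n\mapsto n-1$ first, split $r$ into even/odd, and only afterwards introduce $M=n+s$. These are cosmetic rearrangements of the same computation, and your fallback to the $A_k$ machinery is unnecessary here (that method is reserved for the next lemma).
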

\begin{proof}
We define
$$
Q(n,r) := n(3n+5)/2 + 2nr + r(r+3)/2+1,
$$
and
$$
\mathcal{F}(z;q) = \sum_{r=0}^\infty\sum_{n=0}^\infty (-1)^n z^{-r} q^{Q(n,r)}.
$$
Thus \myeqref{eq:Uzq} can be rewritten as
\beq
(1+z)\mathcal{U}(z;q)=\frac{1}{(q;q)_\infty}
\left(\mathcal{F}(z;q) + z\mathcal{F}(z^{-1};q)\right),
\label{eq:UFzid}
\eeq
since
$$
Q(n,r) = Q(-n-1,-r-1).
$$
We observe that
$$
Q(n-1,r-2n+1) = r(r+1)/2 - n(n-1)/2,
$$
so that
$$
\mathcal{F}(z;q) = 
\sum_{n=1}^\infty \sum_{r=2n-1}^\infty (-1)^{n+1} z^{2n-r-1} 
q^{r(r+1)/2 - n(n-1)/2}.
$$
Letting $z=1$ in \myeqref{eq:UFzid} we find that
\begin{align*}
J_1 U(q) &= J_1 \mathcal{U}(1;q) = \mathcal{F}(1;q) 
= 
\sum_{n=1}^\infty \sum_{r=2n-1}^\infty (-1)^{n+1} 
q^{r(r+1)/2 - n(n-1)/2}\\
&=
\sum_{r=1}^\infty\sum_{n=1}^{\FL{(r+1)/2}} (-1)^{n+1} 
q^{r(r+1)/2 - n(n-1)/2}\\
&=
\sum_{r=1}^\infty\sum_{n=1}^{r} (-1)^{n+1} \left(
q^{r(2r+1) - n(n-1)/2}+ q^{r(2r-1) - n(n-1)/2}\right),
\end{align*}
by replacing $r$ by $2r$ and $2r-1$ in the previous sum.
The result \myeqref{eq:altuhec} follows easily by interchanging $r$ and $n$.
\end{proof}

The proof of the next lemma, which contains our new Hecke-Rogers identity
for $U(q)$, is analogous to that of Lemma \myref{lem:psiHR}.
\begin{lemma}
\label{lem:newUHR}
We have
\beq
\label{uhec}
\frac{J_1^2}{J_2}U(q)
=\sum_{n=1}^{\infty}\sum_{m=1-n}^{n}\sg(m)(-1)^{n-1}q^{n(3n-1)-2m^2+m}(1+q^{2n}),
\eeq
where $\sg(m)=1$ if $m>0$ and $\sg(m)=-1$ otherwise.
\end{lemma}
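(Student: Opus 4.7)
The plan is to mimic the proof of Lemma~\ref{lem:psiHR}, now starting from the Hecke-Rogers representation of $J_1 U(q)$ established in Lemma~\ref{lem:altUHR} rather than the one used there for $J_1 \psi(q)$. The two starting identities differ in two structural ways: the inner alternation is $(-1)^{r-1}$ on the index $r$ rather than $(-1)^{n-1}$ on $n$, and the factor $(1+q^{2n})$ appears in place of $(1-q^{2n})$. These differences must be carried consistently through the argument.

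First I would introduce the family
\[ B_k \;:=\; \sum_{n=1}^{\infty}\sum_{r=1}^{n}(-1)^{r-1}\,q^{2n^2-n-(r-k)(r-k-1)/2}(1+q^{2n}), \]
so that $B_0 = J_1 U(q)$. By manipulating the inner $r$-sum in a suitable linear combination of translates of $B_k$ analogous to $A_k+A_{-k}-A_{k-1}-A_{1-k}$ in the $\psi$ proof, the interior cancellations should collapse the inner sum to boundary terms. After multiplying by $q^{2n^2-n}(1+q^{2n})$ and exploiting the $n\mapsto -n$ symmetry---made available here precisely by the symmetric factor $(1+q^{2n})$---the outer $n$-sum extends to $\sum_{n\in\Z}$, and the pentagonal identity \myeqref{eq:pentid} produces a recurrence of the form $B_k+B_{-k}= 2B_0 + J_1\sum_{r=1}^{k}b_r$ for an explicit sequence $b_r$.

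Next I would form a generating series $S(q):=\sum_{k\in\Z}\epsilon(k)\,B_k\,q^{k^2}$ for a sign $\epsilon(k)\in\{\pm 1\}$ chosen so that, after substituting the definition of $B_k$ and interchanging the order of summation, the inner $k$-sum reduces by the triangular identity \myeqref{eq:trizid} to zero. Hence $S(q)=0$. On the other hand, combining the recurrence with the evaluation of $\sum_{k\in\Z}\epsilon(k)q^{k^2}$ (either $J_1^2/J_2$ or $J_2^5/(J_1^2 J_4^2)$, per \myeqref{eq:thetamod4}) expresses $S(q)$ as $\alpha(q)\,B_0$ minus an explicit double sum involving the $b_r$. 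Equating the two evaluations isolates $(J_1^2/J_2)\,U(q)$ as an explicit double sum, and a reindexing by the residue of the outer summation index modulo~$3$---exactly as the functions $G(3s+t,k)$ were rewritten in terms of $F(m,n)$ at the end of the $\psi$ proof---should rearrange the right-hand side into the Hecke-Rogers form \eqref{uhec}.

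The main obstacle will be tracking the two separate origins of the sign $\sg(m)(-1)^{n-1}$ in the target. The factor $\sg(m)$ is expected to arise from separating the positive and negative $n$ contributions in the extended outer sum (a separation enabled by the symmetric $(1+q^{2n})$ factor), whereas the factor $(-1)^{n-1}$ should emerge from the pentagonal theta evaluation hidden in $b_r$. Choosing the sign conventions in the definition of $B_k$ and in the outer generating series so that the inner $k$-sum vanishes cleanly, and so that the mod-$3$ reindexing of the final double sum produces exactly $\sg(m)(-1)^{n-1}$ with no spurious factors, will be the principal bookkeeping challenge; everything else is a direct transcription of the $\psi$-case template.
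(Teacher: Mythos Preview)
Your template is right, but you have missed the one genuinely new phenomenon that distinguishes the $U(q)$ case from the $\psi(q)$ case, and this gap is not mere bookkeeping.

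In the $\psi$-proof the inner $r$-sum has no sign, so $\sum_{r=1}^n[f(r-k)-f(r-k+1)]$ telescopes and the constant boundary terms $f(1-k)$ and $f(k)$ cancel (since $f(1-k)=f(k)$). Here the alternation $(-1)^{r-1}$ forces you to telescope $\sum_{r=1}^n(-1)^{r-1}[f(r-k)+f(r-k+1)]$, which leaves $f(1-k)+(-1)^{n+1}f(n+1-k)$; the constant piece \emph{survives}. Consequently the recurrence is not of the single-term form you predicted: one gets
\[
A_k+A_{-k}=2A_0+2\frac{J_2^2}{J_1}\sum_{r=1}^{k}(-1)^rq^{-r(r-1)/2}-J_1\sum_{r=1}^{k}a_r,
\]
with an extra $\tfrac{J_2^2}{J_1}$-term absent in the $\psi$ case.

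This has a second consequence you did not anticipate. If you choose $\epsilon(k)$ so that the $k$-sum vanishes via \eqref{eq:trizid}, the theta factor multiplying $A_0$ becomes $\sum_k q^{k^2}=J_2^5/(J_1^2J_4^2)$, not $J_1^2/J_2$, and the spurious $\tfrac{J_2^2}{J_1}$-term remains uncancelled. In the paper one instead takes $\epsilon(k)=(-1)^k$ (with $A_k$ carrying the sign $(-1)^{r+k-1}$), so that $\sum_k(-1)^kA_kq^{k^2}$ is \emph{not} zero; a separate reindexing computation (via the substitutions $Q(k-j,k-2j)$ and $Q(j-k,k-2j+1)$) evaluates it directly as $2\tfrac{J_2^2}{J_1}\sum_{k\ge1}\sum_{j=1}^k(-1)^{j+k}q^{k^2-j(j-1)/2}$, which is exactly the contribution of the extra recurrence term. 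The two cancel, leaving $\tfrac{J_1^2}{J_2}A_0=J_1\sum_{k\ge1}\sum_{r=1}^k(-1)^kq^{k^2}a_r$, after which your mod-$3$ reindexing goes through verbatim. The missing idea is this nontrivial direct evaluation of $S(q)$ and the resulting cancellation; without it the argument does not close.
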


\begin{proof}
We define
$$
A_k:=
\sum_{n=1}^{\infty}\sum_{r=1}^{n}(-1)^{r+k-1}q^{2n^2-n-(r-k)(r-k-1)/2}(1+q^{2n}),
$$
so that $A_0 = J_1 U(q)$ by Lemma \myref{lem:altUHR}.
Using \myeqref{jacp} and \myeqref{eq:pentid}
we derive a recurrence relation for $A_k$:
\begin{align*}
&A_k+A_{-k}-(A_{k-1}+A_{1-k})\\
&=\sum_{n=1}^{\infty}\sum_{r=1}^{n}(-1)^{r+k-1}(q^{2n^2-n-(r-k)(r-k-1)/2}+q^{2n^2-n-(r+k)(r+k-1)/2})(1+q^{2n})\\
&-\sum_{n=1}^{\infty}\sum_{r=1}^{n}(-1)^{r+k}(q^{2n^2-n-(r-k+1)(r-k)/2}+q^{2n^2-n-(r+k-1)(r+k-2)/2})(1+q^{2n})\\
&=(-1)^k\sum_{n=1}^{\infty}(q^{2n^2-n-k(k-1)/2}+q^{2n^2-n-k(k-1)/2})(1+q^{2n})\\
&-(-1)^k\sum_{n=1}^{\infty}(-1)^n(q^{2n^2-n-(n-k)(n-k+1)/2}+q^{2n^2-n-(n+k)(n+k-1)/2})(1+q^{2n})\\
&=(-1)^k\left( 2\sum_{n=-\infty}^\infty q^{2n^2-n-k(k-1)/2} -
   \sum_{n=-\infty}^\infty (-1)^n q^{n(3n+1)/2 + n(1-k) -k(k-1)/2}\right.\\
&{\hskip 2in} \left. - \sum_{n=-\infty}^\infty (-1)^nq^{n(3n+1)/2 + nk -k(k-1)/2}
\right)\\
&=2\frac{J_2^2}{J_1}(-1)^kq^{-k(k-1)/2}-J_1a_k.
\end{align*}
where
\begin{align*}
&a_{3r}:=q^{-6r^2+r},\\
&a_{3r+1}:=-q^{-6r^2-r},\\
&a_{3r+2}:=q^{-6r^2-5r-1}-q^{-6r^2-7r-2}.
\end{align*}
Therefore for $k>0$ we have
\beq
\label{ak-k}
A_k+A_{-k}=2A_0+2\frac{J_2^2}{J_1}\sum_{r=1}^{k}(-1)^rq^{-r(r-1)/2}-J_1\sum_{r=1}^{k}a_r.
\eeq
We calculate the following sum two ways. By using \myeqref{jacp} we have
\begin{align}
\label{Ak01}
&\sum_{k=-\infty}^{\infty}(-1)^kA_kq^{k^2}=
\sum_{k=-\infty}^{\infty}\sum_{n=1}^{\infty}
\sum_{r=1}^{n}(-1)^{r-1}q^{2n^2-n+k^2-(r-k)(r-k-1)/2}(1+q^{2n})\\
&=\sum_{k=-\infty}^{\infty}\sum_{n=1}^{\infty}
  \sum_{r=1}^{n}(-1)^{r-1}
  q^{2n^2-n+(k+r)(k+r-1)/2-r(r-1)}(1+q^{2n})
\nonumber \\
&=\sum_{k=-\infty}^{\infty}q^{k(k-1)/2} \sum_{n=1}^{\infty}
  \sum_{r=1}^{n}(-1)^{r-1}
  q^{2n^2-n-r(r-1)}(1+q^{2n})
\nonumber \\
\nonumber
&=2\frac{J_2^2}{J_1}
\sum_{n=1}^{\infty}\sum_{r=1}^{n}(-1)^{r-1}q^{2n^2-n-r(r-1)}(1+q^{2n})\\
&=2\frac{J_2^2}{J_1}
\sum_{n=-\infty}^{\infty}\sum_{r=1}^{\abs{n}}(-1)^{r-1}q^{2n^2-n-r(r-1)}.
\nonumber
\end{align}
This time we let
$$
Q(n,r) = 2n^2 - n - r(r-1),
$$
and observe that
$$
Q(k-j, k - 2j) = k^2 - 2j^2 - j,\qquad
Q(j-k, k - 2j+1) = k^2 - 2j^2 + j.         
$$
It follows that
\begin{align*}
\sum_{n=-\infty}^{\infty}\sum_{r=1}^{\abs{n}}(-1)^{r-1}q^{2n^2-n-r(r-1)}
&=\sum_{n=1}^{\infty}\sum_{r=1}^{{n}}(-1)^{r-1}q^{2n^2-n-r(r-1)} +
  \sum_{n=-\infty}^{-1}\sum_{r=1}^{{-n}}(-1)^{r-1}q^{2n^2-n-r(r-1)}\\
&=\sum_{j=0}^\infty \sum_{k=2j+1}^\infty  (-1)^{k+1}q^{k^2-2j^2-j} 
 +\sum_{j=1}^\infty \sum_{k=2j}^\infty  (-1)^{k}q^{k^2-2j^2+j} \\
&=\sum_{m=1}^\infty \sum_{k=m}^\infty  (-1)^{m+k}q^{k^2-m(m-1)/2} 
=\sum_{k=1}^\infty \sum_{j=1}^k  (-1)^{j+k}q^{k^2-j(j-1)/2},    
\end{align*}
and from \myeqref{Ak01} we have
$$
\sum_{k=-\infty}^{\infty}(-1)^kA_kq^{k^2}=
2\frac{J_2^2}{J_1}
\sum_{k=1}^\infty \sum_{j=1}^k  (-1)^{j+k}q^{k^2-j(j-1)/2}.
$$
Next we calculate the sum on the left side of \myeqref{Ak01} using
\myeqref{ak-k} and find that
\beq          
\label{Ak02}
\sum_{k=-\infty}^{\infty}(-1)^kA_kq^{k^2}=\frac{J_1^2}{J_2}A_0-J_1
\sum_{k=1}^{\infty}\sum_{r=1}^{k}(-1)^kq^{k^2}a_r
+2\frac{J_2^2}{J_1}\sum_{k=1}^{\infty}
\sum_{r=1}^{k}(-1)^{r+k}q^{k^2-r(r-1)/2}.
\eeq
By \myeqref{Ak01} and \myeqref{Ak02} we have
\beq
\label{Uq1m}
\frac{J_1^2}{J_2}A_0=J_1\sum_{k=1}^{\infty}\sum_{r=1}^{k}(-1)^kq^{k^2}a_r.
\eeq
Next we proceed as in the proof of Lemma \myref{psihec} and define
analogous functions $G(r,k):=(-1)^kq^{k^2}a_r$ and 
$F(m,n):=\sg(m) (-1)^{n-1}q^{n(3n-1)-2m^2+m}$. The analogs of
equations \myeqref{gf1}--\myeqref{gf3} hold and we have
\begin{align}
&\frac{J_1^2}{J_2}U(q)
=\sum_{k=1}^{\infty}\sum_{r=1}^{k}(-1)^kq^{k^2}a_r
=\sum_{k=1}^\infty\sum_{r=1}^k G(r,k)
=\sum_{n=-\infty}^\infty\sum_{m=1-\abs{n}}^{\abs{n}}F(m,n)
	\label{eq:UGFHR}\\
\nonumber
&=\sum_{n=1}^{\infty}\sum_{m=1-n}^{n}\sg(m)(-1)^{n-1}q^{n(3n-1)-2m^2+m}(1+q^{2n}).
\end{align}
\end{proof}

We are now ready to complete the proof of Conjecture \myref{conju}.

\begin{proof}[Proof of congruence \myeqref{eq:conjuA} in 
	Conjecture \myref{conju}]

We need a result similar to Lemma \myref{lemD1}. We define
$$
	\epsilon_1(m,n) = \frac{1}{2}\Parans{\sg(m) (-1)^{n-1} -
	\sg(n) (-1)^{m-1}}.
$$
We note that $\epsilon_1(m,n)$ only takes the values $0$, $\pm1$.
Then by \myeqref{uhec} and \myeqref{psihec} we have
\beq
U(q) - \psi(q) \equiv 2\, D_u(q) \pmod{4},            
\label{eq:Upsimod4}
\eeq
where
$$
D_u(q) :=\sum_{n=1}^\infty d_u(n) q^n := 
\sum_{n=-\infty}^\infty 
\sum_{1-\abs{n}\leq m\leq\abs{n}}\epsilon_1(m,n)q^{n(3n-1)-m(2m-1)}.
$$
The proof of \myeqref{eq:Upsimod4} is completely analogous to that
of Lemma \myref{lemD1}.

We assume $\ell\equiv 7,11,13,17\pmod{24}$ is prime and
let $n$, $k$ be  integers where $\displaystyle\leg{k}{\ell}=-1$. 
We suppose that
$$
m = \ell^2n+kl-s(\ell),
$$
and recall that $s(\ell)=\tfrac{1}{24}(\ell^2-1)$.
Then
$$
24m-1 = \ell ( \ell(24n-1) + 24 k)\equiv 24 k \ell \pmod{\ell^2}.
$$
Hence $\ell\|24m-1$ and $d_0(m)=0$ by Corollary \myref{corpell1},
since $\ell\equiv\pm7,\pm11\pmod{24}$. We note that
$$
k\ell - s(\ell) \equiv (24 k\ell+1) \delta_\ell \pmod{\ell^2}.
$$
Therefore
$$
\Np(m) = \Np(\ell^2 n + k\ell -s(\ell)) \equiv 0 \pmod{4}
$$
by Theorem \myref{thepsic} since $\ell\equiv\pm7,\pm11\pmod{24}$ and
$$
\leg{24k}{\ell} = \leg{6}{\ell} \leg{k}{\ell} = 1 = \varepsilon(\ell).
$$
Since $d_0(m)=0$ we have $d_u(m)=0$ and by
\myeqref{eq:Upsimod4} we have
$$
u(m) \equiv \Np(m) + 2 d_u(m) \equiv \Np(m) \pmod{4},
$$
and
$$
u( \ell^2n+kl-s(\ell)) = u(m) \equiv 0 \pmod{4}.
$$
\end{proof}

\section{Conclusion}
\label{sec:end}
The main goal of this paper was to prove the mod $4$ unimodal sequence 
conjectures of Bryson, Ono, Pitman and Rhoades \cite{Br-On-Pi-Rh12}
and Kim, Lim and Lovejoy \cite{Ki-Li-Lo16}. We also proved
a related mod $4$ conjecture for the Andrews spt-function. The crucial
part of the proofs was the connection with the Hurwitz class number.
Along the way we needed to study the mod $4$ behaviour of the
coefficients of certain mock theta functions. As mentioned before, the
parity of these  was determined very recently by Wang \cite{Wa2020}.
It would be interesting to determine whether the methods of this paper
can be used to extend Wang's parity results to mod $4$ results for
other mock theta functions.

In this paper we have found a number new Hecke-Rogers identities
\myeqref{psihec}, \myeqref{uhec} and \myeqn{altuhec}. Can these identities
be proved using the Bailey pair machinery \cite[Ch.3]{An1986}?
What are the missing Bailey pairs?

\subsection*{Acknowledgments}
We would like to thank Jonathan Bradley-Thrush,
Chris Jennings-Shaffer, Jeremy Lovejoy  and Eric Mortenson
for their comments and suggestions. 




\begin{thebibliography}{10}
\bibitem{Ah-Br-Lo11}
Scott Ahlgren, Kathrin Bringmann, and Jeremy Lovejoy, \emph{{$\ell$}-adic
  properties of smallest parts functions}, Adv. Math. \textbf{228} (2011),
  no.~1, 629--645. \MR{2822242}
\bibitem{An08}
George~E. Andrews, \emph{The number of smallest parts in the partitions of
  {$n$}}, J. Reine Angew. Math. \textbf{624} (2008), 133--142. \MR{2456627}
\bibitem{An1974}
George~E. Andrews, \emph{Applications of basic hypergeometric functions}, SIAM
  Rev. \textbf{16} (1974), 441--484. \MR{352557}
\bibitem{An1986}
George~E. Andrews, \emph{{$q$}-series: their development and application in
  analysis, number theory, combinatorics, physics, and computer algebra}, CBMS
  Regional Conference Series in Mathematics, vol.~66, Published for the
  Conference Board of the Mathematical Sciences, Washington, DC; by the
  American Mathematical Society, Providence, RI, 1986. \MR{858826}
\bibitem{An2012}
George~E. Andrews, \emph{{$q$}-orthogonal polynomials, {R}ogers-{R}amanujan
  identities, and mock theta functions}, Tr. Mat. Inst. Steklova \textbf{276}
  (2012), no.~Teoriya Chisel, Algebra i Analiz, 27--38. \MR{2986107}
\bibitem{An-Be2005}
George~E. Andrews and Bruce~C. Berndt, \emph{Ramanujan's lost notebook. {P}art
  {I}}, Springer, New York, 2005. \MR{2135178}
\bibitem{An-Dy-Hi88}
George~E. Andrews, Freeman~J. Dyson, and Dean Hickerson, \emph{Partitions and
  indefinite quadratic forms}, Invent. Math. \textbf{91} (1988), no.~3,
  391--407. \MR{928489}
\bibitem{An-Ga-Li13}
George~E. Andrews, Frank~G. Garvan, and Jie Liang, \emph{Self-conjugate vector
  partitions and the parity of the spt-function}, Acta Arith. \textbf{158}
  (2013), no.~3, 199--218. \MR{3040662}
\bibitem{An-Hi91}
George~E. Andrews and Dean Hickerson, \emph{Ramanujan's ``lost'' notebook.
  {VII}. {T}he sixth order mock theta functions}, Adv. Math. \textbf{89}
  (1991), no.~1, 60--105. \MR{1123099}
\bibitem{An-Pa-Se-Ye2017}
George~E. Andrews, Donny Passary, James~A. Sellers, and Ae~Ja Yee,
  \emph{Congruences related to the {R}amanujan/{W}atson mock theta functions
  {$\omega(q)$} and {$\nu(q)$}}, Ramanujan J. \textbf{43} (2017), no.~2,
  347--357. \MR{3649013}
\bibitem{At1968}
A.~O.~L. Atkin, \emph{Multiplicative congruence properties and density problems
  for {$p(n)$}}, Proc. London Math. Soc. (3) \textbf{18} (1968), 563--576.
  \MR{227105}
\bibitem{At-SwD}
A.~O.~L. Atkin and P.~Swinnerton-Dyer, \emph{Some properties of partitions},
  Proc. London Math. Soc. (3) \textbf{4} (1954), 84--106. \MR{0060535}
\bibitem{Be-Ch07}
Bruce~C. Berndt and Song~Heng Chan, \emph{Sixth order mock theta functions},
  Adv. Math. \textbf{216} (2007), no.~2, 771--786. \MR{2351377}
\bibitem{Be-Ra-Ri2020}
Olivia Beckwith, Martin Raum, and Olav~K. Richter, \emph{Non-holomorphic
  ramanujan-type congruences for hurwitz class numbers}, arXiv preprint
  arXiv:2004.09886 (2020).
\bibitem{Bo-Sh66}
A.~I. Borevich and I.~R. Shafarevich, \emph{Number theory}, Translated from the
  Russian by Newcomb Greenleaf. Pure and Applied Mathematics, Vol. 20, Academic
  Press, New York-London, 1966. \MR{0195803}
\bibitem{Br2008}
Kathrin Bringmann, \emph{On the explicit construction of higher deformations of
  partition statistics}, Duke Math. J. \textbf{144} (2008), no.~2, 195--233.
  \MR{2437679}
\bibitem{Br-Ka2020}
Kathrin Bringmann and Ben Kane, \emph{Class numbers and representations by
  ternary quadratic forms with congruence conditions}, arXiv preprint
  arXiv:2002.01835 (2020).
\bibitem{Br-On-Pi-Rh12}
Jennifer Bryson, Ken Ono, Sarah Pitman, and Robert~C. Rhoades, \emph{Unimodal
  sequences and quantum and mock modular forms}, Proc. Natl. Acad. Sci. USA
  \textbf{109} (2012), no.~40, 16063--16067. \MR{2994899}
\bibitem{BrTh2020}
J.~G. Bradley-Thrush, \emph{Properties of the {A}ppell-{L}erch function ({I})},
  Ramanujan J., to appear.
\bibitem{Ch-Ga}
R.~Chen and F.~G. Garvan., \emph{Congruences modulo $4$ for weight $3/2$
  eta-products}, Bull. Austral. Math. Soc., doi:10.1017/S0004972720000982, to
  appear.
\bibitem{Ch-Wa2020}
Dandan Chen and Liuquan Wang, \emph{Representations of mock theta functions},
  Adv. Math. \textbf{365} (2020), 107037. \MR{4064766}
\bibitem{Co93}
Henri Cohen, \emph{A course in computational algebraic number theory}, Graduate
  Texts in Mathematics, vol. 138, Springer-Verlag, Berlin, 1993. \MR{1228206}
\bibitem{Cox1989}
David~A. Cox, \emph{Primes of the form {$x^2 + ny^2$}}, A Wiley-Interscience
  Publication, John Wiley \& Sons, Inc., New York, 1989, Fermat, class field
  theory and complex multiplication. \MR{1028322}
\bibitem{Ga13}
F.~G. Garvan, \emph{Congruences for {A}ndrews' spt-function modulo 32760 and
  extension of {A}tkin's {H}ecke-type partition congruences}, Number theory and
  related fields, Springer Proc. Math. Stat., vol.~43, Springer, New York,
  2013, pp.~165--185. \MR{3081040}
\bibitem{Ga15a}
F.~G. Garvan, \emph{Universal mock theta functions and two-variable
  {H}ecke-{R}ogers identities}, Ramanujan J. \textbf{36} (2015), no.~1-2,
  267--296. \MR{3296723}
\bibitem{Go-McI00}
Basil Gordon and Richard~J. McIntosh, \emph{Some eighth order mock theta
  functions}, J. London Math. Soc. (2) \textbf{62} (2000), no.~2, 321--335.
  \MR{1783627}
\bibitem{Hasse1970}
H.~Hasse, \emph{\"{U}ber die {T}eilbarkeit durch {$2^{3}$} der {K}lassenzahl
  der quadratischen {Z}ahlk\"{o}rper mit genau zwei verschiedenen
  {D}iskriminantenprimteilern}, Math. Nachr. \textbf{46} (1970), 61--70.
  \MR{281699}
\bibitem{Hi-Lo2015}
Kazuhiro Hikami and Jeremy Lovejoy, \emph{Torus knots and quantum modular
  forms}, Res. Math. Sci. \textbf{2} (2015), Art. 2, 15. \MR{3324400}
\bibitem{Hi-Mo14}
Dean~R. Hickerson and Eric~T. Mortenson, \emph{Hecke-type double sums,
  {A}ppell-{L}erch sums, and mock theta functions, {I}}, Proc. Lond. Math. Soc.
  (3) \textbf{109} (2014), no.~2, 382--422. \MR{3254929}
\bibitem{Hu07}
G.~{Humbert}, \emph{{Formules relatives aux nombres de classes des formes
  quadratiques binaires et positives}}, {Journ. de Math. (6)} \textbf{3}
  (1907), 337--449 (French).
\bibitem{Ki-Li-Lo16}
Byungchan Kim, Subong Lim, and Jeremy Lovejoy, \emph{Odd-balanced unimodal
  sequences and related functions: parity, mock modularity and quantum
  modularity}, Proc. Amer. Math. Soc. \textbf{144} (2016), no.~9, 3687--3700.
  \MR{3513531}
\bibitem{Lo2004}
Jeremy Lovejoy, \emph{Overpartitions and real quadratic fields}, J. Number
  Theory \textbf{106} (2004), no.~1, 178--186. \MR{2049600}
\bibitem{McI07}
Richard~J. McIntosh, \emph{Second order mock theta functions}, Canad. Math.
  Bull. \textbf{50} (2007), no.~2, 284--290. \MR{2317449}
\bibitem{Mo17}
Eric~T. Mortenson, \emph{A {K}ronecker-type identity and the representations of
  a number as a sum of three squares}, Bull. Lond. Math. Soc. \textbf{49}
  (2017), no.~5, 770--783. \MR{3742444}
\bibitem{Mo2013}
Eric Mortenson, \emph{On three third order mock theta functions and
  {H}ecke-type double sums}, Ramanujan J. \textbf{30} (2013), no.~2, 279--308.
  \MR{3017930}
\bibitem{Mo2014}
Eric~T. Mortenson, \emph{On the dual nature of partial theta functions and
  {A}ppell-{L}erch sums}, Adv. Math. \textbf{264} (2014), 236--260.
  \MR{3250284}
\bibitem{Pizer1976}
Arnold Pizer, \emph{On the {$2$}-part of the class number of imaginary
  quadratic number fields}, J. Number Theory \textbf{8} (1976), no.~2,
  184--192. \MR{406975}
\bibitem{Ra1988}
Srinivasa Ramanujan, \emph{The lost notebook and other unpublished papers},
  Springer-Verlag, Berlin; Narosa Publishing House, New Delhi, 1988, With an
  introduction by George E. Andrews. \MR{947735}
\bibitem{Ra2000}
Srinivasa Ramanujan, \emph{Collected papers of {S}rinivasa {R}amanujan}, AMS
  Chelsea Publishing, Providence, RI, 2000, Edited by G. H. Hardy, P. V. Seshu
  Aiyar and B. M. Wilson, Third printing of the 1927 original, With a new
  preface and commentary by Bruce C. Berndt. \MR{2280843}
\bibitem{Wa1935}
G.~N. Watson, \emph{Generating functions of class-numbers}, Compositio Math.
  \textbf{1} (1935), 39--68. \MR{1556875}
\bibitem{Wa1936}
G.~N. Watson, \emph{The {F}inal {P}roblem : {A}n {A}ccount of the {M}ock
  {T}heta {F}unctions}, J. London Math. Soc. \textbf{11} (1936), no.~1, 55--80.
  \MR{1573993}
\bibitem{Wa2020}
Liuquan Wang, \emph{Parity of coefficients of mock theta functions}, arXiv
  preprint arXiv:2007.03664 (2020).
\bibitem{Wi18}
Brandon Williams, \emph{Vector-valued {H}irzebruch-{Z}agier series and class
  number sums}, Res. Math. Sci. \textbf{5} (2018), no.~2, Paper No. 25, 13.
  \MR{3799151}
\bibitem{Za2001}
Don Zagier, \emph{Vassiliev invariants and a strange identity related to the
  {D}edekind eta-function}, Topology \textbf{40} (2001), no.~5, 945--960.
  \MR{1860536}
\end{thebibliography}
\end{document}